\documentclass{amsart}
\usepackage{amsmath,amssymb,mathrsfs,amscd}
\usepackage[left=3.5cm,right=3.5cm]{geometry}

\usepackage[all]{xy}

\usepackage{wasysym}
\usepackage{xcolor}
\usepackage[normalem]{ulem}
\usepackage{comment}
\usepackage{mathtools}
\usepackage{mathbbol}

\theoremstyle{plain}
\newtheorem{Theorem}{Theorem}[section]
\newtheorem{Definition}[Theorem]{Definition}
\newtheorem{Lemma}[Theorem]{Lemma}
\newtheorem{Proposition}[Theorem]{Proposition}
\newtheorem{Corollary}[Theorem]{Corollary}
\newtheorem*{Remark}{Remark}
\newtheorem{Examples}[Theorem]{Examples}
\newtheorem{Non-examples}[Theorem]{Non-examples}
\newtheorem{Convention}[Theorem]{Convention}

\newcommand{\ax}{\mathbf{Ax}}

\usepackage[bookmarks=true, pdfauthor={}]{hyperref}

\begin{document}
\title{GENERIC FREE SUBGROUPS AND STATISTICAL HYPERBOLICITY}

\author{Suzhen Han}

\author{Wen-yuan Yang}
\address{Beijing International Center for Mathematical Research (BICMR), Beijing University, No. 5 Yiheyuan Road, Haidian District, Beijing, China}
\email{suzhenhan@pku.edu.cn}
\email{yabziz@gmail.com}

\thanks{W. Y. is supported by the National Natural Science Foundation of China (No. 11771022).}

\begin{abstract}
This paper studies the generic behavior of  $k$-tuple elements for $k\ge 2$ in a proper group action with contracting elements, with applications towards relatively hyperbolic groups, CAT(0) groups and mapping class groups. For a class of  statistically convex-cocompact action, we show that an exponential generic set of $k$ elements for any fixed $k\ge 2$ generates a quasi-isometrically embedded free subgroup of rank $k$. For $k=2$, we study the sprawl property of group actions and  establish that  the class of statistically convex-cocompact actions is statistically hyperbolic in a sense of M. Duchin, S. Leli\`{e}vre, and C. Mooney.

For any proper action with a contracting element,  if it satisfies a condition introduced by Dal'bo-Otal-Peign\'{e} and has purely exponential growth,  we obtain the same results on generic free subgroups and statistical hyperbolicity.
\end{abstract}

\maketitle

\section{Introduction}
\subsection{Motivation and background}

Suppose that a group $G$ admits a proper and isometric action on a proper geodesic metric space $(Y,d)$. The group $G$ is assumed to be \textit{non-elementary}: it is not  virtually cyclic.   An element $g\in G$  is called \textit{contracting} if for some basepoint $o\in Y$, an orbit $\{h^n\cdot o:n\in \mathbb{Z}\}$ is a contracting subset, and the map $n\in \mathbb{Z}\mapsto h^no\in G$ is a quasi-isometric embedding. Here a subset $X$ is called \textit{contracting} if any metric ball disjoint with $X$ has a uniformly bounded projection to $X$ (see \cite{M,BF}). It is clear that this definition does not depend on the choice of the basepoint.

The prototype of a contracting element is a hyperbolic isometry on Gromov-hyperbolic spaces, but more interesting examples are furnished by the following:
\begin{itemize}
\item hyperbolic elements in relatively hyperbolic groups or groups with nontrivial Floyd boundary (see \cite{GP,GP1});
\item rank-1 elements in CAT(0) groups (see \cite{B,BF});
\item certain infinite order elements in certain small cancellation groups (see \cite{ACGH});
\item pseudo-Anosov elements in mapping class groups of closed oriented surfaces with genus greater than two acting on Teichm\"{u}ller space (see \cite{M}).
\end{itemize}

In \cite{Y}, the second-named author proved that, for a class of \textit{statistically convex-cocompact} actions defined below,  the set $X$ of contracting elements is \textit{exponentially generic} in the ball model:
\[ \frac{|X\cap B_n|}{|B_n|} \to 1\]
exponentially fast, where $B_n:=\{g\in G: d(o, go)\leq n\}$.

Along this line, the goal of this paper is to  continue the  study of generic properties for $k$-tuples of elements in $G$ for a fixed $k\ge 2$. To that end, we introduce a few more notations. We   fix a basepoint $o\in Y$ and  denote $|g|=  d(o,go) $ for easy notation. Denote $G^{(k)}=\{(u_1,\cdots,u_k):u_i\in G\}$ and $B_n^{(k)}=\{(u_1,\cdots,u_k)\in G^{(k)}:|u_i|\leq n\}.$ When $k$ is understood, we write $\overrightarrow{u}$ for $(u_1,\cdots,u_k)$, and $|\overrightarrow{u}|$ for $\max\{|u_i|:1\leq i\leq k\}$.

The \textit{asymptotic density} of a subset $X\subseteq G^{(k)}$ in ball model is defined as
\[\mu(X)=\lim_{n\rightarrow\infty}\frac{|X\cap B_n^{(k)}|}{|B_n^{(k)}|}\]
if the limit exists. If the convergence happens exponentially fast, we denote $\mu(X) \stackrel{exp}{=}\lambda\in [0, 1]$.
We shall be interested in the extreme cases   $\mu(X)\stackrel{exp}{=}1$ (resp. $\mu(X)=1$) which are called   \textit{exponentially generic} (resp. \textit{generic}). By definition, the complement of an (exponentially) generic set is called \textit{(exponentially) negligible}.

The generic property of $k$-tuple of elements has been studied using random walks in various class of groups with negative curvature. Let $\mu$ be a probability measure with finite support on the group $G$ so that the support generates $G$ as a semi-group. A $\mu$-random walk is a product of a sequence of independent identical $\mu$-distributed random variables on $G$. In our setting, Sisto \cite{Sisto} proved that the $n$-th step of a simple random rank lands on a contracting element with asymptotic probability one. In mapping class groups, this was obtained by Maher for pseudo-Anosov elements, and the most general result is, as far as we know,  due to Maher and Tiozzo \cite{MaherT} for any non-elementary action on a hyperbolic space where  random elements are being loxodromic. When $k\ge 2$,  Gilman, Miasnikov, and Osin \cite{GMO} proved in hyperbolic groups that two simple random walks on the Cayley graph  stay at a ping-pong position in $n$-steps with asymptotic probability one so that they generate an undistorted free group of rank 2.  The same result holds in non-virtually solvable linear groups \cite{Aoun} and in  mapping class groups \cite{Rivin2, TiozzoT, MaherS} for two independent $\mu$-random walks. In fact, most of these works  are stated in a general class of groups with hyperbolic embedded subgroups called b  Dahmani, Guirardel and Osin \cite{DGO} and equivalently, the class of acylindrical hyperbolic groups in the sense of Osin \cite{Osin}.  It is worth pointing out that a proper action with a contracting element  is  acylindrical hyperbolic by a result of Sisto \cite{Sisto}. However, our first goal is to address the analogue of generic free subgroups using counting measure as above instead of probability measure from random walks.

In fact, studying the generic properties of $k$-tuple elements in a counting measure is not a new idea. In \cite{DLM}, M. Duchin, S. Leli\`{e}vre, and C. Mooney initiated a study of sprawl property of   pair of points in the space. The notion of statistical hyperbolicity is then introduced to capture  negative curvature  in a statistical sense.  Roughly speaking, the intuitive meaning could be explained as follows: consider the annular set $$A(n,\Delta)=\{g\in G:||g|-n|\leq\Delta\}$$ for $\Delta>0$. On average, a random pair of points $x,y$ on an annular set $A(n,\Delta)$ of the group has the distance $d(xo,yo)$ nearly equal to $2n$. We formulize this concept using both annuli and balls.
\begin{Definition}\label{DefnStaHyp}
Let $G$ admit a proper action on a geodesic metric space $(Y,d)$. Define
\[E_B(G)=\lim_{n\rightarrow+\infty}\frac{1}{|B_n|^2}\sum_{x,y\in B_n}\frac{d(x,y)}{n},\]
and for a constant $\Delta>0$,
\[E_A(G, \Delta)=\lim_{n\rightarrow+\infty}\frac{1}{|A(n, \Delta)|^2}\sum_{x,y\in A(n,\Delta)}\frac{d(x,y)}{n},\]
if the limit exists.  The action  is called \textit{statistically hyperbolic} in annuli (resp. in balls) if $E_A(G, \Delta)=2$ for any sufficiently large $\Delta>0$ (resp. $E_B(G)=2$).
\end{Definition}
\begin{Remark}
In \cite{DLM} this definition was introduced using annular model with $\Delta=0$  in  the Cayley graph of groups. Here we consider also the quantity $E_B(G)$ without involving the extra parameter $\Delta$. In our results, we obtain   $E_A(G, \Delta)=E_B(G)=2$ along the same line of proofs.
\end{Remark}

The non-examples  include elementary groups, $\mathbb{Z}^d$ for $d\geq2$, and the integer Heisenberg group for any finite generating set among the others (cf. \cite{DLM}). In the opposite, the exact value of $E_B(G)=2$ indeed happens for many groups with certain negative curvature from a  point of view of coarse geometry. For instance,  non-elementary relatively hyperbolic groups are statistical hyperbolic for any finite generating set (cf. \cite{DLM,OY}). Moreover,  the statistical hyperbolicity is preserved under certain direct product of a   relatively hyperbolic group and a group. And the lamplighter groups $\mathbb{Z}_m\wr\mathbb{Z}$  where $m\geq 2$  are statistical hyperbolic for certain generating sets \cite{DLM}.

The notion of statistical hyperbolicity could be considered for any metric space with a measure as in  \cite{DLM}, rather than our definition using a counting measure. In this direction, it was proved in the same paper    that for any $m,p\geq 2$, the Diestel-Leader graph $DL(m,p)$ is statistically hyperbolic. The statistical hyperbolicity for Teichm\"{u}ller space with various measures was proved by Dowdall, Duchin and Masur  in \cite{DDM}.

The second goal of the paper is to generalize these results in a very general class of proper actions using counting measures from orbits in Definition \ref{DefnStaHyp}. In what follows, we shall describe our results in detail.

\subsection{Main results}
In order to expose our results, we   first give a quick overview of the various classes of actions under consideration in this study.  First of all, we consider the class of  statistically convex-cocompact actions introduced in \cite{Y} which generalizes a convex-cocompact action in a statistical sense. Making this idea precise requires a notion of \textit{growth rate} of a subset $X$ in $G$:
\[\delta_X=\limsup_{n\rightarrow\infty}\frac{\ln|X\cap B_n|}{n}.\] It is clear that the value $\delta_X$ does not depend on the choice of the basepoint.
By abuse of language, a geodesic between two sets $A$ and $B$ is a geodesic between $a\in A$ and $b\in B$.

Given constants $0\leq M_1\leq M_2$, let $\mathcal{O}_{M_1,M_2}$ be the set of element $g\in G$ such that there exists some geodesic $\gamma$ between $B(o,M_2)$ and $B(go,M_2)$ with the property that the interior of $\gamma$ lie outside $N_{M_1}(Go)$.

\begin{Definition}[SCC Action]\label{SCCDefn}
If there exist positive constants $M_1,M_2>0$ such that $\delta_{\mathcal{O}_{M_1,M_2}}<\delta_G<\infty$, then the proper action of $G$ on $Y$ is called \textit{statistically convex-cocompact} (SCC).
\end{Definition}

The idea to define the set $\mathcal{O}_{M_1,M_2}$ is to look at the action of  the fundamental group of a finite volume Hadamard manifold on its universal cover. It is then easy to see that for appropriate constants $M_1, M_2>0$, the set $\mathcal{O}_{M_1,M_2}$ coincides with  the union of cusp subgroups up to a  finite Hausdorff distance. The assumption in SCC actions was called a \textit{parabolic gap condition} by Dal'bo, Otal and Peign\'{e} in \cite{DOP}. One of motivations of this study is to push forward the analogy between the concave set $\mathcal{O}_{M_1,M_2}$ and the (union of) parabolic cusp regions. This allows us to draw conclusions for   the SCC actions through the analogy with the geometrically finite actions, which have been well studied in last twenty years.

Moreover, our study suggests considering a class of proper actions satisfying a more general condition introduced at the same paper  \cite{DOP}. The condition, reformulated below, is proved   to be equivalent to the finiteness of Bowen-Margulis-Sullivan (BMS) measure on the geodesic flow of the unit tangent bundle of a geometrically finite  Hadamard manifold in \cite{DOP}, and later for any Hadamard manifold by Pit and Shapira \cite[Theorem 2]{PS18}.

\begin{Definition}[DOP condition]\label{DOPDef}
The group action of $G$ on $Y$ satisfies the \textit{Dal'bo-Otal-Peign\'{e} (DOP) condition} if there exist two positive constants $M_1,M_2>0$ such that
\[\sum_{g\in \mathcal{O}_{M_1,M_2}}|g|\exp(-\delta_G|g|)<\infty\]

\end{Definition}
\begin{Remark}
We remark that, in the setting of negatively curved manifolds,  the DOP condition is called \textit{positive recurrent} by Pit and Shapira in \cite{PS18}, whereas the notion of SCC actions is called \textit{strongly positive recurrent}   by Shapira and Tapie in \cite{ST18}.  We thank R\'emi Coulon to bring these references to our attention.
\end{Remark}

The concept of the geodesic flow is non-applicable in a general geodesic metric space with negative curvature such as contracting property. However, the definition of the DOP condition could be always made, and so could be understood as substitute of finite BMS measures in a general metric space. One of Roblin's results \cite[Th\'eoreme 4.1]{Roblin} stated in the setting of a geometrically finite Hadamard manifold  characterized the finiteness of BMS measures by a \textit{purely exponentially growth} (PEG)  of the action:
\[|B_n| \asymp\exp(\delta_Gn).\]
Hence, the class of proper actions with purely exponential growth should be viewed as equivalents of DOP conditions. We expect this relation persists in a very general setting,  and remark that it is indeed true for the class of geometrically finite action on a $\delta$-hyperbolic space in \cite{YANG8} (weaker than the setting of Roblin).

Our first main result establishes that generic $k$-tuple elements are the free basis of a free group with quasi-isometrically embedded property for the above two class of actions.

\begin{Theorem}\label{thm:gen}
Assume that    a non-elementary group  $G$  acts properly on a geodesic metric space $(Y,d)$ with a contracting element. If $G$  satisfies the DOP condition and has purely exponentially growth. Then for any $k\ge 2$,  the set of all tuples $(u_1,\cdots,u_k)\in G^{(k)}$ so that $u_1,\cdots,u_k$ generate a free subgroup of rank $k$ in $G$ is generic in $G^{(k)}$. Moreover, these free subgroups are quasi-isometrically embedded with contracting images.
\end{Theorem}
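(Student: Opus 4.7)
The plan is to produce the desired free subgroups by a ping-pong argument based on a fixed contracting element, and to control the genericity by a counting estimate powered by DOP together with purely exponential growth (PEG). Fix once and for all a contracting element $f\in G$ with quasi-axis $F=\langle f\rangle\cdot o$. I would invoke an extension lemma in the spirit of those developed in \cite{Y,YANG8}: for any $k$-tuple $\overrightarrow{u}=(u_1,\ldots,u_k)$ such that each geodesic $[o,u_io]$ contains a "long aligned segment", meaning it fellow-travels some translate $gF$ for length at least $L$, with the alignment deep in the interior of the geodesic, one produces small perturbations $v_i=w_iu_iw_i'$ with $|w_i|,|w_i'|=O(1)$ whose axes lie in ping-pong position on contracting translates of $F$. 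Standard Morse/contracting arguments then ensure that $\langle v_1,\ldots,v_k\rangle$ is free of rank $k$, quasi-isometrically embedded, and has contracting orbit. Since the perturbation $\overrightarrow{u}\mapsto \overrightarrow{v}$ is bounded-to-one after restriction to balls, it suffices to prove that the set of $L$-aligned $k$-tuples is generic for some sufficiently large fixed $L$.

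Second, the $k$-tuple statement reduces to a single-coordinate estimate via $|B_n^{(k)}|=|B_n|^k$ and a union-bound-type inequality. I would prove: the set $\mathcal{B}_n(L)\subset B_n$ of elements failing the $L$-alignment condition satisfies $|\mathcal{B}_n(L)|/|B_n|\to 0$ as $L\to\infty$, uniformly in $n$. The contracting-geometry criterion relating "avoiding long aligned segments" to "travelling long in $\mathcal{O}_{M_1,M_2}$" implies that any $g\in\mathcal{B}_n(L)$ admits a decomposition $g=g_1hg_2$, where $h\in\mathcal{O}_{M_1,M_2}$ is a "non-contracting middle piece" of length at least $L$, and $|g_1|+|h|+|g_2|\leq |g|+C$ for a universal constant $C$. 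Combining the upper bound $|\{(g_1,g_2):|g_1|+|g_2|\leq m\}|\leq C' m\,e^{\delta_G m}$ from PEG with the DOP summability yields
\begin{equation*}
\frac{|\mathcal{B}_n(L)|}{|B_n|}\;\leq\; C''\!\!\sum_{\substack{h\in\mathcal{O}_{M_1,M_2}\\ |h|\geq L}} |h|\,e^{-\delta_G|h|},
\end{equation*}
which is the tail of a convergent series and therefore tends to $0$ as $L\to\infty$. Coupled with the reduction of the preceding paragraph, this gives the desired genericity of the set of $k$-tuples generating a quasi-isometrically embedded free subgroup of rank $k$ with contracting images.

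The principal obstacle is the passage from the SCC case, where one enjoys a strict spectral gap $\delta_{\mathcal{O}_{M_1,M_2}}<\delta_G$ and bad sets decay exponentially, to the weaker DOP$+$PEG regime, which only provides a weighted summability and cannot produce exponential genericity. The delicate technical point is to arrange the concatenation decomposition $g=g_1hg_2$ so that each bad element is counted a bounded number of times, and so that the PEG upper bound on the outer factors exactly cancels the weight $e^{-\delta_G|h|}$ supplied by DOP; any slack here would yield a divergent outer sum. This is the counting analogue of the Dal'bo--Otal--Peign\'e finiteness argument for the Bowen--Margulis--Sullivan measure, transposed from the measure-theoretic setting of negatively curved manifolds to a purely combinatorial one in a proper action with a contracting element.
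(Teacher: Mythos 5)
There are three genuine gaps here, one of which is fatal to the counting estimate as written.

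First, the perturbation step proves the wrong statement. The theorem asserts that the set of tuples $(u_1,\ldots,u_k)$ such that $u_1,\ldots,u_k$ \emph{themselves} generate a free group is generic, whereas you establish freeness only for the modified elements $v_i=w_iu_iw_i'$. Even if the map $\overrightarrow{u}\mapsto\overrightarrow{v}$ is bounded-to-one, that controls the density of the image but does not show that the original tuple $\overrightarrow{u}$ generates a free group. The paper avoids this entirely: it defines a generic set $E$ (via the negligible sets $V,W,Z,T$ of Section~\ref{sec:negligiblesubset}) and then proves directly, by constructing a $(D,\tau)$-admissible path for every freely reduced word in $u_1,\ldots,u_k$ and invoking Proposition~\ref{pro:quasi-geo}, that each $\overrightarrow{u}\in E$ has $\langle u_1,\ldots,u_k\rangle$ free, quasi-isometrically embedded, and contracting. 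No perturbation is involved.

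Second, your counting inequality is off by a polynomial factor, and the slip is exactly the ``any slack here would yield a divergent outer sum'' you warned yourself about. When you decompose $g=g_1hg_2$ with $|h|=l$ and $|g_1|+|g_2|\leq n-l+C$, the number of admissible pairs $(g_1,g_2)$ under PEG is controlled by the convolution $\sum_{j}|B_j|\,|B_{n-l-j}|\asymp (n-l+1)e^{\delta_G(n-l)}$, so after dividing by $|B_n|\asymp e^{\delta_G n}$ the polynomial weight you inherit is $(n-l+1)$, not $l$. Thus the true bound is $|\mathcal{B}_n(L)|/|B_n|\prec\sum_{L\leq l\leq n}(n-l+1)\,|\mathcal{O}_{M_1,M_2}\cap A(l)|\,e^{-\delta_G l}$, which for fixed $L$ is of order $n$ times the tail of a convergent series and does \emph{not} tend to $0$ as $n\to\infty$. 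Your claim that $|\mathcal{B}_n(L)|/|B_n|\to 0$ as $L\to\infty$ uniformly in $n$ therefore fails. The paper's remedy is structural: it insists that the offending middle piece have length a \emph{definite proportion} of $n$ (see the set $U(\varepsilon)$ in~(\ref{Uepsilon}), the barrier interval $[\varepsilon_1,\varepsilon_2]$ in~(\ref{Vepsilon12}), and the use of $(\rho,\Delta)$-annuli $A([\rho n,n])$ from Lemma~\ref{LargeAnnulus}), so that $l\geq\varepsilon\rho n$ and the polynomial factor $n\leq l/(\varepsilon\rho)$ is absorbed into the weight $l$ appearing in Lemma~\ref{lemma:0}(1); it is precisely DOP that makes $\sum_{l\geq\varepsilon\rho n} l\,|\mathcal{O}_M(l,\Delta)|e^{-\delta_G l}\to 0$.

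Third, an $L$-alignment condition on each coordinate separately is not enough: you need a transversality condition \emph{between} coordinates, ensuring that the geodesics $[o,u_i^{\pm 1}o]$ and $[o,u_j^{\pm 1}o]$ (including the case $i=j$) do not shadow each other near their barrier intervals. This is exactly what the negligible sets $Z(\varepsilon_1,\varepsilon_2,C)$ and $T(\varepsilon_1,\varepsilon_2,C)$ in~(\ref{Zepsilon12}) and~(\ref{Tepsilon12}) buy the paper: they guarantee the bounded-projection conditions (BP) in the admissible-path construction. Without this pairwise control, the claim that your perturbed axes lie ``in ping-pong position'' has no justification---ping-pong is a relation between pairs, and it cannot be arranged coordinate by coordinate from a single-coordinate hypothesis.
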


When the action is SCC, the above assumptions hold, and moreover, we can obtain an exponential convergence rate  for the above conclusion.

\begin{Theorem}\label{thm:gen2}
Assume that a non-elementary group $G$ admit a SCC action on a geodesic metric space $(Y,d)$ with a contracting element. Then for any $k\ge 2$,  the set of all $(u_1,\cdots,u_k)\in G^{(k)}$ for which $u_1,\cdots,u_k$ generate a    free subgroup of rank $k$ in $G$ is exponentially generic in $G^{(k)}$. Moreover, these free subgroups are quasi-isometrically embedded with contracting images.
\end{Theorem}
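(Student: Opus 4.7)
The plan is to prove Theorem \ref{thm:gen2} by producing, for exponentially generic $k$-tuples $(u_1,\ldots,u_k)\in G^{(k)}$, an explicit ping-pong configuration on $Y$ that simultaneously forces $\langle u_1,\ldots,u_k\rangle$ to be free of rank $k$, quasi-isometrically embedded, and to have contracting orbit. Since an SCC action satisfies both the DOP condition and purely exponential growth, Theorem \ref{thm:gen} already yields the non-exponential version of the conclusion, so the new content is entirely the exponential rate of convergence.

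The starting point is the fact, recalled from \cite{Y}, that under an SCC action the set of contracting elements is exponentially generic in each $B_n$. I would upgrade this to a structural statement: fix once and for all a contracting element $h\in G$ whose axis is not shared by any other contracting conjugacy class. Then there exist $\epsilon,c>0$ such that, for an exponentially generic $u\in B_n$, some geodesic from $o$ to $uo$ contains in its middle portion a translate of an $h$-axis segment of length at least $\epsilon n$, and whose endpoints are transverse to this central piece. The mechanism is the extension lemma for contracting elements, combined with the SCC hypothesis $\delta_{\mathcal{O}_{M_1,M_2}}<\delta_G$, which forces the elements whose geodesics persistently avoid long $h$-segments to form an exponentially negligible subset of $B_n$.

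With this uniform structural input in hand, I would perform ping-pong on a family of shadows of the central $h$-axis segments attached to each $u_i$. The bounded-projection property of contracting sets guarantees that once the central segments of the $u_i$ are pairwise non-aligned, every nontrivial reduced word in the $u_i^{\pm1}$ shuttles between pairwise disjoint shadows, which simultaneously yields freeness of $\langle u_1,\ldots,u_k\rangle$, the quasi-isometric embedding of the word metric into the orbit, and the contracting property of the image. The $k$-tuples for which this configuration fails must contain some pair $(u_i,u_j)$ whose central axis segments share a common translation direction; a projection estimate bounds the number of such aligned pairs in $B_n^{(2)}$ by a quantity whose growth rate is strictly less than $2\delta_G$, so their exponential density tends to $0$, and a union bound over the finitely many pairs closes the argument.

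The main obstacle I foresee is precisely this upgrade from the \emph{pointwise} exponential genericity of contracting elements in $B_n$ to a \emph{joint} exponential estimate over $B_n^{(k)}$ with uniform control on the length and position of the central $h$-segment. This requires a product-of-balls estimate $|B_n^{(k)}|\asymp\exp(k\delta_G n)$, which is immediate from purely exponential growth, together with a careful bookkeeping that each bad event (a coordinate $u_i$ lacking a long contracting middle, or a pair $(u_i,u_j)$ being aligned) contributes a factor whose growth rate is strictly below $\delta_G$, the slack being supplied uniformly by the SCC inequality. Once this uniform estimate is established, the remaining assertions of the moreover clause are formal consequences of the ping-pong picture and the standard properties of contracting projections.
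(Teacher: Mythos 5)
Your proposal follows the same broad strategy as the paper: use the SCC hypothesis to ensure that an exponentially generic element has a long $h$-axis translate (a ``barrier'') in the middle of its geodesic, then use bounded projection for these contracting segments to force every reduced word to evaluate to a long quasi-geodesic. The paper's proof of Theorem~\ref{thm:gen2} is literally one line: rerun the construction of Lemma~\ref{lemma:gen} and the proof of Theorem~\ref{thm:gen}, and observe that the negligible sets $U(\varepsilon)$, $W(\varepsilon,h,C)$, $V(\varepsilon_1,\varepsilon_2,h)$, $Z(\varepsilon_1,\varepsilon_2,C)$, $T(\varepsilon_1,\varepsilon_2,C)$ of Section~\ref{sec:negligiblesubset} become exponentially negligible once the DOP summability is replaced by the strict exponent gap $\delta_{\mathcal{O}}<\delta_G$. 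So the structural skeleton you describe is the right one, and your identification of the SCC gap as the source of the exponential rate is exactly what Lemma~\ref{lemma:0} exploits.

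However, there is a genuine gap in your argument, concentrated at the step ``the $k$-tuples for which this configuration fails must contain some pair $(u_i,u_j)$ whose central axis segments share a common translation direction.'' This only accounts for interference between \emph{distinct} generators, i.e.\ the set $T(\varepsilon_1,\varepsilon_2,C)$. But a reduced word over $\{u_1^{\pm1},\dots,u_k^{\pm1}\}$ may contain adjacent \emph{repeated} letters, e.g.\ $u_1u_1$; in the concatenated path this forces you to compare a geodesic $\alpha=[o,u_1o]$ with the preceding translate $u_1^{-1}\alpha$, i.e.\ with $\overline\alpha=[o,u_1^{-1}o]$. If $\overline\alpha$ enters the $C$-neighborhood of the middle portion $\alpha_{[\varepsilon,1-\varepsilon]}$, the two shadows you would use for $u_1^{+}$ and $u_1^{-}$ can overlap and the ping-pong collapses. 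This is precisely the rôle of the single-element set $Z(\varepsilon_1,\varepsilon_2,C)$ in~(\ref{Zepsilon12}) and Lemma~\ref{lemma:sep1}; your proposal never controls it, and it is \emph{not} a consequence of the pairwise ``non-aligned'' condition, since the relevant comparison involves $\alpha$ and $\overline\alpha$ for the \emph{same} index $i$, not the diagonal of $B_n^{(2)}$.

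Two smaller remarks. First, the auxiliary hypothesis you impose on $h$ (``whose axis is not shared by any other contracting conjugacy class'') is unnecessary and somewhat ill-posed: the paper needs only the maximal elementary group $E(h)$ from Lemma~\ref{lemma:finiteindex}, whose axis $\ax(h)=E(h)\cdot o$ is automatically a contracting set with bounded intersection with its $G$-translates. Second, the negligibility of the aligned-pairs set (your Lemma~\ref{lemma:sep} analogue, $T$) and of the barrier-free set (your ``long $h$-segment'' assertion, $V$) is asserted but not proved; the paper's proofs are genuine case analyses depending on whether the relevant points lie in $N_M(Go)$, and the decomposition into annular pieces over a product index set is where the exponent $\delta_{\mathcal{O}}<\delta_G$ actually enters. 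These estimates are the heart of the argument, and their absence makes the proposal a plan rather than a proof.
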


A   group  generated by  a finite set   acts cocompactly on its Cayley graph, so our results apply for this particular case. A finitely generated subgroup $H$ is called \textit{undistorted} if the inclusion $H\subset G$ is quasi-isometrically embedded with respect to word metrics.

\begin{Corollary}\label{CorCayleyGraph}
Let $G$ be a   non-elementary group with a finite generating set $S$. If $G$ has a contracting element, then the set of all $(u_1,\cdots,u_k)\in G^{(k)}$ for which $u_1,\cdots,u_k$ generate an undistorted  free subgroup of rank $k$ in $G$ is exponentially generic in $G^{(k)}$.
\end{Corollary}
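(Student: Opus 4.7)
The plan is to deduce the corollary directly from Theorem \ref{thm:gen2} by verifying that the natural left action of $G$ on its Cayley graph $Y = \mathrm{Cay}(G, S)$ is a statistically convex-cocompact action in the sense of Definition \ref{SCCDefn}. Take the basepoint $o \in Y$ to be the identity vertex. The action is proper (in fact, free) and isometric, and contains a contracting element by hypothesis, so all standing assumptions of Theorem \ref{thm:gen2} are available once SCC is checked.

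The first step is to estimate $\mathcal{O}_{M_1,M_2}$ for $Y = \mathrm{Cay}(G,S)$. Since $G$ acts transitively on vertices, the orbit $Go$ is the whole vertex set, and every point of $Y$ lies within distance $1/2$ of a vertex; hence $N_{M_1}(Go) = Y$ as soon as $M_1 \geq 1$. For such $M_1$ and any $M_2 > 0$, the defining condition on $g \in \mathcal{O}_{M_1,M_2}$ forces the geodesic from $B(o, M_2)$ to $B(go, M_2)$ to have empty interior, so $|g| \leq 2M_2$. Thus $\mathcal{O}_{M_1,M_2}$ is a \emph{finite} set and $\delta_{\mathcal{O}_{M_1,M_2}} = 0$. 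Combined with the elementary bound $\delta_G \leq \ln(2|S|+1) < \infty$ and the fact that $\delta_G > 0$ (since a non-elementary action with a contracting element admits two independent contracting elements playing ping-pong, hence a quasi-isometrically embedded free subgroup of rank two, and therefore exponential growth), this verifies $\delta_{\mathcal{O}_{M_1,M_2}} < \delta_G < \infty$, so the action is SCC.

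Applying Theorem \ref{thm:gen2} to this SCC action yields an exponentially generic set of $k$-tuples $(u_1, \dots, u_k) \in G^{(k)}$ whose entries generate a rank-$k$ free subgroup $H$ whose orbit in $Y$ is a quasi-isometrically embedded contracting set. The final translation step is to read off \emph{undistortion}: the map $(G, d_S) \to Y$, $g \mapsto go$, restricts to an isometry between $(G, d_S)$ and the vertex set of $Y$, so a subgroup has quasi-isometrically embedded orbit in $Y$ if and only if it is undistorted in $G$ with respect to the word metric $d_S$. This completes the reduction. I do not foresee a genuine obstacle; the only delicate point is the verification that the cocompact action makes $\mathcal{O}_{M_1,M_2}$ finite, and that is a one-line consequence of the density of $Go$ in $Y$.
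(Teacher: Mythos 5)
Your proposal is correct and takes the same route the paper intends: the paper dispatches this corollary with the remark that a finitely generated group acts cocompactly on its Cayley graph, hence the action is SCC and Theorem \ref{thm:gen2} applies; you have simply filled in the details (that $N_{M_1}(Go)=Y$ forces $\mathcal{O}_{M_1,M_2}$ to be finite, that $0<\delta_G<\infty$, and that quasi-isometric embedding of the orbit translates to undistortion of the subgroup). The verification is sound.
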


To illustrate consequences of previous results, we remark that the following examples of groups  with contracting elements with respect to the Cayley graph:
\begin{enumerate}
\item any relatively hyperbolic group $G$ acts on a Cayley graph $\mathscr{G}(G,S)$ with respect to a finite generating set $S$. See \cite{GP1}.
\item any group $G$ with non-trivial Floyd boundary acts on a Cayley graph $\mathscr{G}(G,S)$ with respect to a finite generating set $S$. \cite{GP1}.
\item the right-angled Artin (Coxeter) groups  with respect to the standard generating set, if they are not virtually direct product. \cite{BehC, BHS, ChaSul}.
\item the Gr'$(\frac{1}{6})$-labeled graphical small cancellation group $G$ with finite components labeled by a finite set $S$ acts on the Cayley graph $\mathscr{G}(G,S)$. See \cite{ACGH}.
\end{enumerate}
Thus, by Corollary \ref{CorCayleyGraph}, the list of these   examples all have the generic free basis property.  We remark that this result is even new in the class of relatively hyperbolic groups.

We next explain an application of Theorem \ref{thm:gen2} about surface group extensions.  Let $\mathbb{Mod}(\Sigma_g)$ be the mapping class group  of a closed oriented surface $\Sigma_g$ of genus $g\ge 2$. Combining the results of Minsky \cite{M} and Eskin-Mirzakhani-Rafi \cite{EMR} we know that the action of $\mathbb{Mod}(\Sigma_g)$  on Teichm\"{u}ller space $\mathcal T(\Sigma_g)$ is a SCC action with a contracting element. By Theorem \ref{thm:gen2},  we obtain the exponential genericity of   $k$-tuple elements $(u_1, u_2, \cdots, u_k)$ being free basis  in the counting measure from Teichm\"{u}ller metric. Denote $\Gamma:=\langle u_1, u_2, \cdots, u_k \rangle$.  Marking a point $p\in \Sigma_g$, the Bireman exact sequence  in \cite{Birman} gives an extension $E_\Gamma$ in $\mathbb{Mod}(\Sigma_g, p)$ of the surface group   $\pi_1(\Sigma_g, p)$ by $\Gamma$  as follows
\[1\to \pi_1 \Sigma_g\to E_\Gamma \to \Gamma\to 1.\]
We refer the reader to the reference \cite{FMbook}   for related facts about $\mathbb{Mod}(\Sigma_g)$ and $\mathcal T(\Sigma_g)$.

In \cite{FarbMosher},  Farb and Mosher studied when the extension is a hyperbolic group and showed that, when $\Gamma$ is a Schottky group,  this is equivalent to the quasiconvexity of $\Gamma$-orbits in $\mathcal T(\Sigma_g)$.

In Theorem \ref{thm:gen2}, the quasi-isometrically embedded image of the free group $\Gamma$ are contracting and thus quasiconvex in the sense of Farb and Mosher. Thus, by \cite[Theorem 1.1]{FarbMosher}, the free group $\Gamma$ is convex-cocompact in their sense, so the following result holds.

\begin{Theorem}
The set of $k$-tuples of elements $(u_1, u_2, \cdots, u_k)$ in $\mathbb{Mod}(\Sigma_g)$ with the hyperbolic extension in $\mathbb{Mod}(\Sigma_g, p)$ is exponentially generic.
\end{Theorem}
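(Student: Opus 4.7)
The plan is to deduce this statement directly from Theorem \ref{thm:gen2} applied to the action $\mathbb{Mod}(\Sigma_g) \curvearrowright \mathcal{T}(\Sigma_g)$ by Teichmüller isometries, combined with the Farb--Mosher characterization of hyperbolic surface group extensions in \cite{FarbMosher}. The group $\mathbb{Mod}(\Sigma_g)$ is non-elementary, the action on Teichmüller space is proper, pseudo-Anosov elements are contracting by Minsky \cite{M}, and the action is statistically convex-cocompact by Eskin--Mirzakhani--Rafi \cite{EMR}; the authors have already assembled these facts in the paragraph preceding the statement, so all hypotheses of Theorem \ref{thm:gen2} are verified.

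First I would invoke Theorem \ref{thm:gen2} to obtain an exponentially generic set $\mathcal{F}_k \subseteq \mathbb{Mod}(\Sigma_g)^{(k)}$ such that for every tuple $(u_1, \dots, u_k) \in \mathcal{F}_k$ the subgroup $\Gamma := \langle u_1, \dots, u_k \rangle$ is free of rank $k$, quasi-isometrically embedded in $\mathbb{Mod}(\Sigma_g)$, and has contracting orbit $\Gamma \cdot o$ in $\mathcal{T}(\Sigma_g)$. Because any contracting quasi-geodesic axis is contracting in the usual metric sense, every non-trivial element of such a $\Gamma$ is a loxodromic isometry of $\mathcal{T}(\Sigma_g)$ with a contracting axis, hence pseudo-Anosov; thus $\Gamma$ is purely pseudo-Anosov and qualifies as a Schottky-type free subgroup in the sense relevant to \cite{FarbMosher}.

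Next I would translate the contracting property into Farb--Mosher's convex cocompactness. Contracting subsets of a proper geodesic metric space are automatically quasiconvex, so the $\Gamma$-orbit in $\mathcal{T}(\Sigma_g)$ is quasiconvex. By \cite[Theorem 1.1]{FarbMosher}, a Schottky-type free subgroup is convex cocompact in their sense precisely when its orbit is quasiconvex in Teichmüller space, and this is further equivalent to the associated Birman extension
\[
1 \to \pi_1(\Sigma_g, p) \to E_\Gamma \to \Gamma \to 1
\]
being a Gromov-hyperbolic group. Consequently $\mathcal{F}_k$ is contained in the set of tuples with hyperbolic extension, and exponential genericity passes to supersets, yielding the statement.

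The main content lives in Theorem \ref{thm:gen2}; the remaining work is the bookkeeping identification of this paper's contracting property with the hypotheses of \cite[Theorem 1.1]{FarbMosher}, and the only genuine subtlety is ensuring that the contracting orbits provided by Theorem \ref{thm:gen2} are generated by pseudo-Anosov elements (so that the Schottky hypothesis of Farb--Mosher is met), which, as noted above, is automatic from the contracting axis property.
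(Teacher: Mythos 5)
Your proposal is correct and follows essentially the same route as the paper: apply Theorem~\ref{thm:gen2} to the SCC action of $\mathbb{Mod}(\Sigma_g)$ on $\mathcal{T}(\Sigma_g)$ (with SCC provided by Eskin--Mirzakhani--Rafi and contracting elements by Minsky), note that contracting orbits are quasiconvex, and invoke \cite[Theorem 1.1]{FarbMosher} to pass from convex cocompactness of $\Gamma$ to hyperbolicity of the Birman extension $E_\Gamma$. The paper's proof is precisely this deduction; you additionally make explicit the small but worthwhile point that the generic free subgroups produced by Theorem~\ref{thm:gen2} consist entirely of contracting (hence pseudo-Anosov) elements aside from the identity, so that the Schottky hypothesis needed to cite Farb--Mosher is indeed satisfied --- a detail the paper leaves implicit.
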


Our second  main result obtains the statistical hyperbolicity for the exact class of actions  as in Theorem \ref{thm:gen}, and in particular for statistically convex-cocompact actions.

\begin{Theorem}\label{thm:stahyper}
Let a non-elementary group $G$ act properly on $(Y,d)$ with a contracting element satisfying DOP condition and  purely exponentially growth. Then $G$ is statistically hyperbolic in balls and annuli. In particular, if the action is SCC, then $G$ is statistically hyperbolic in balls and annuli
\end{Theorem}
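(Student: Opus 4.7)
\emph{Strategy.} The upper bound $E_B(G)\le 2$ (and $E_A(G,\Delta)\le 2$) is immediate from $d(xo,yo)\le |x|+|y|$. The matching lower bound rests on the identity
\[\frac{1}{|B_n|^2}\sum_{x,y\in B_n}\frac{d(xo,yo)}{n}\;=\;\frac{2}{n|B_n|}\sum_{x\in B_n}|x|\;-\;\frac{2}{n|B_n|^2}\sum_{x,y\in B_n}(x\cdot y)_o,\]
where $(x\cdot y)_o=\tfrac{1}{2}(|x|+|y|-d(xo,yo))$ is the Gromov product at the basepoint. I would prove $E_B(G)=2$ by showing the first term tends to $2$ (average norm) and the second tends to $0$ (average Gromov product).

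\emph{Controlling the average norm.} By purely exponential growth, for any $\epsilon>0$ one has $|B_{(1-\epsilon)n}|/|B_n|\asymp e^{-\epsilon\delta_G n}\to 0$, so almost every element of $B_n$ satisfies $|x|\ge(1-\epsilon)n$. Sandwiching $|x|/n$ between $(1-\epsilon)(1-o(1))$ and $1$ and letting $\epsilon\to 0$ gives $\frac{1}{|B_n|}\sum_{x\in B_n}|x|/n\to 1$. In the annular model this is trivial since $|x|\in[n-\Delta,n+\Delta]$ on $A(n,\Delta)$.

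\emph{Controlling the average Gromov product.} This is the heart of the argument. I would invoke the contracting–axis ping-pong machinery underlying Theorems \ref{thm:gen} and \ref{thm:gen2}: the generic pair $(x,y)$ produced there is placed in a uniform ``quasi-tree'' configuration with respect to a contracting element, so the geodesics $[o,xo]$ and $[o,yo]$ separate at $o$ up to a uniform error, yielding an additive bound $(x\cdot y)_o\le C$ with $C$ depending only on the contracting element. Splitting the sum according to whether $(x,y)$ lies in this generic ``good'' set or in its (exponentially) negligible complement — on which we have only the trivial estimate $(x\cdot y)_o\le\min(|x|,|y|)\le n$ — the average is bounded by $C\cdot 1+n\cdot o(1)=o(n)$. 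Dividing by $n$ shows the second term vanishes, and combining with the first term gives $E_B(G)=2$. The annular statement $E_A(G,\Delta)=2$ runs in parallel once one observes that, under DOP and PEG, $|A(n,\Delta)|\asymp|B_n|\asymp e^{\delta_G n}$ for $\Delta$ sufficiently large, so the (exponential) genericity of bounded Gromov product transfers from $B_n^{(2)}$ to $A(n,\Delta)^{(2)}$ up to a bounded multiplicative factor.

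\emph{Main obstacle.} The critical point is extracting from the ping-pong construction the \emph{additive} bound $(x\cdot y)_o\le C$ on a generic set of pairs. A quasi-isometric embedding of $\langle x,y\rangle$ alone only provides a multiplicative bound of the form $(x\cdot y)_o\le \tfrac{1}{2}(1-L^{-1})(|x|+|y|)+C'$, which is $\Theta(n)$ on typical pairs and too weak for the averaging argument. The sharpening to a uniform constant uses the thin-triangle geometry along contracting axes produced by the contracting element, and verifying that this quantitative estimate persists on an (exponentially) generic set in the counting measure — rather than merely for individual ping-pong pairs — is where the real work lies and what forces the ingredients of Theorem \ref{thm:gen} to be revisited at a sharper, constant-level resolution.
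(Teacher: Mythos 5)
Your overall strategy — Gromov-product reformulation, split into a good set plus a negligible remainder, trivial bound on the remainder — is in spirit the same as the paper's, and the ``average norm'' step is correct. But the pointwise bound you claim on the good set is too strong and in fact false. You assert that there is a constant $C$, depending only on the contracting element, and a generic set of pairs $(x,y)\in G^{(2)}$ on which $(x\cdot y)_o\le C$. Already in the free group $F_2$ with its standard generators this fails: the Gromov product of two independent uniformly random elements of $B_n$ is the length of their common prefix, and $\mathrm{Prob}\bigl[(x\cdot y)_o>C\bigr]\asymp 3^{-C}$, a positive quantity that does \emph{not} tend to $0$ with $n$. So $\{(x,y):(x\cdot y)_o\le C\}$ has density strictly below $1$ for every fixed $C$; it is never generic. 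The ping-pong/quasi-tree picture cannot rescue this — no matter how thin the triangles are, a positive fraction of pairs will share a long common subsegment near $o$.

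The correct version, which is what the paper actually proves, is weaker and involves a double limit: for every $\varepsilon>0$ there is a generic (indeed, density $\to 1$) set of pairs, depending on $\varepsilon$, on which $(x\cdot y)_o\le\varepsilon n+O_\Delta(1)$ (equivalently $d(xo,yo)\ge 2(1-O(\varepsilon))n-O_\Delta(1)$). Concretely the paper fixes $\varepsilon$, excludes a negligible set $E$ of elements $x$ (those in $U(\varepsilon/2)\cup V(2\varepsilon,3\varepsilon,h^m)\cup W(\varepsilon,C)$) and, for each remaining $x$, a negligible set $K_x$ of elements $y$ whose geodesic passes near $\alpha_{[\varepsilon,4\varepsilon]}$; on the complement the barrier in $\alpha_{[2\varepsilon,3\varepsilon]}$ forces $[xo,yo]$ to pass through a uniform neighborhood of that barrier, giving the claimed lower bound on $d(xo,yo)$. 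The averaging then yields $\liminf_n E_B(G)\ge 2-O(\varepsilon)$ (and similarly for annuli), and one lets $\varepsilon\to 0$ at the very end. If you replace your additive bound ``$\le C$'' by ``$\le\varepsilon n$ generically, for every $\varepsilon$'' and append the $\varepsilon\to 0$ limit, your argument becomes a faithful reformulation of the paper's proof; as written it has a genuine gap.
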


\begin{Remark}
Motivated by the distinction between SCC action and a general proper action, one may wonder whether there is a significant convergence rate of $E_A(G, \Delta)$ or $E_B(G)$ under  SCC actions. This is, however, not true  even in free groups: a simple computation as Example \ref{FreeExample} shows that the convergence rate is of order $\frac{1}{n}$. Hence,  we have no assertion on  the convergence speed.
\end{Remark}

Except the class of SCC actions,  the action of  discrete groups on CAT(-1) spaces provides a source of examples with DOP condition and purely exponential growth. For example, combining   \cite{Roblin} and \cite{PS18}  we obtain that the finiteness of  the Bowen-Margulis-Sullivan measure on the geodesic flow is equivalent to either have purely exponential growth or satisfy the DOP condition. Hence, we obtain the following corollary.

\begin{Theorem}
Suppose that the Bowen-Margulis-Sullivan measure on the unit tangent bundle of a Hadamard manifold is finite. Then the fundamental group action on the universal covering is statistically hyperbolic in balls and annuli. Moreover, the generic pair of elements generate a free group of rank 2 with uniform quasi-isometric embedding.
\end{Theorem}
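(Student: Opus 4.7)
The plan is to deduce this result directly from Theorems \ref{thm:gen} and \ref{thm:stahyper}, which requires verifying four structural hypotheses for the deck-transformation action of $G=\pi_1(M)$ on the universal cover $Y=\tilde{M}$: properness, the existence of a contracting element, the DOP condition, and purely exponential growth.

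First, properness is automatic, as deck transformations always act properly discontinuously and by isometries on $\tilde{M}$. Since the Hadamard manifold $M$ must be negatively curved for the geodesic flow and the BMS measure to be meaningful, $\tilde{M}$ is CAT$(-1)$; the non-triviality of a finite BMS measure then forces the limit set of $G$ to be nonempty and $G$ itself to be non-elementary, so $G$ contains a hyperbolic isometry whose axis is contracting in the sense of Section~1, supplying the required contracting element.

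Next, the DOP condition and purely exponential growth are supplied by the equivalences already recalled in the introduction: by Pit--Shapira \cite[Theorem 2]{PS18}, finiteness of the BMS measure is equivalent to the DOP condition on the action, and in the present Hadamard setting, finite BMS measure is also equivalent to purely exponential growth of $G$ by Roblin \cite[Th\'eor\`eme 4.1]{Roblin}. Both hypotheses therefore hold.

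With all four conditions in place, Theorem \ref{thm:gen} applied with $k=2$ yields that a generic pair $(u_1,u_2)\in G^{(2)}$ generates a quasi-isometrically embedded free subgroup of rank $2$ with contracting image, and Theorem \ref{thm:stahyper} yields statistical hyperbolicity in both balls and annuli. The uniformity of the quasi-isometric embedding over the generic set is inherited from the proof of Theorem \ref{thm:gen}, whose constants are extracted from a fixed contracting element and a fixed ping-pong construction and do not depend on the particular generic pair. Since the argument is a straightforward assembly of the two main theorems of the paper with the cited dynamical equivalences, there is no substantive obstacle; the only point deserving care is that the Pit--Shapira extension of Roblin beyond the geometrically finite case is what permits dispensing with any finiteness-of-cusps assumption on $M$.
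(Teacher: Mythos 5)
Your proposal takes essentially the same route as the paper: the paper's argument (given only as a paragraph of discussion preceding the corollary) is precisely to invoke Roblin for finite BMS $\Leftrightarrow$ purely exponential growth and Pit--Schapira for finite BMS $\Leftrightarrow$ DOP, and then to apply Theorems \ref{thm:gen} and \ref{thm:stahyper}. Your reading off of uniformity from the fixed constants $(D,\tau,\Lambda)$ in the admissible-path machinery is also correct.

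One small inaccuracy worth flagging: it is not true that finiteness (or nontriviality) of the BMS measure forces $G$ to be non-elementary. A cyclic group generated by a single hyperbolic isometry has a finite, nontrivial BMS measure supported on the closed geodesic, and a two-point limit set. Non-elementarity is a standing hypothesis throughout the paper (stated at the outset of Section 1), not a consequence of the finiteness of BMS, and it is that standing hypothesis that guarantees a contracting element via a hyperbolic isometry. Your proof is otherwise sound once this is read as an assumption rather than a deduction.
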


If a hyperbolic $n$-manifold for $n\ge 2$ is geometrically finite, then the BMS measure is always finite \cite{Sul}.  We thus have the following corollary in Kleinian groups, which seems to be not recorded in literatures. Note that examples of non-geometrically finite Kleinian groups with finite BMS measures are constructed for $n\ge 4$ by Peign\'e in \cite{Peigne}.

\begin{Corollary}
Geometrically finite Kleinian groups are statistically hyperbolic and have generic free basis property.
\end{Corollary}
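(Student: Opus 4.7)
The plan is to obtain this corollary as a direct specialization of the preceding theorem on Hadamard manifolds with finite Bowen-Margulis-Sullivan measure, so the work amounts to checking that each hypothesis is automatic in the classical Kleinian setting. Let $\Gamma$ be a non-elementary geometrically finite Kleinian group; by definition $\Gamma$ is a discrete subgroup of $\mathrm{Isom}(\mathbb{H}^n)$, hence its action on hyperbolic $n$-space $\mathbb{H}^n$ is proper and isometric. Since $\mathbb{H}^n$ is a Hadamard manifold, it qualifies as the universal cover in the previous theorem.

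The next step is to produce a contracting element. Any loxodromic isometry $g\in \Gamma$ has an orbit $\{g^n o\}$ that is quasi-isometrically embedded and lies on a biinfinite geodesic; as $\mathbb{H}^n$ is CAT($-1$), and in particular Gromov hyperbolic, such a quasi-geodesic is strongly contracting in the sense defined in the introduction. Non-elementariness ensures that $\Gamma$ contains a loxodromic element (in fact, two independent ones), so the contracting element hypothesis is satisfied.

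The only remaining hypothesis is finiteness of the BMS measure on the unit tangent bundle of $\mathbb{H}^n/\Gamma$. This is exactly the content of Sullivan's theorem \cite{Sul}, valid for any geometrically finite Kleinian group in any dimension $n\ge 2$. With all hypotheses verified, the previous theorem applies verbatim and yields both statistical hyperbolicity in balls and annuli and the genericity of pairs (more generally, $k$-tuples) of elements generating quasi-isometrically embedded free subgroups of rank $k$, which is precisely the generic free basis property.

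There is essentially no obstacle here beyond bookkeeping; the substantive content was already invested in the general statement about Hadamard manifolds with finite BMS measure, and the role of the corollary is simply to record the classical special case where geometric finiteness makes the measure-theoretic hypothesis automatic via \cite{Sul}.
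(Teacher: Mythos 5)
Your proof is correct and matches the paper's intended argument: the corollary is obtained by specializing the preceding theorem on Hadamard manifolds with finite BMS measure, using Sullivan's theorem \cite{Sul} that geometric finiteness forces the BMS measure to be finite. Your additional verification that loxodromic elements are contracting (via the CAT($-1$) geometry of $\mathbb{H}^n$) and that non-elementariness supplies one is a harmless extra explicitness that the paper leaves implicit.
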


For  the action of  mapping class groups on  Teichm\"{u}ller space, we then have the following corollary, which could be thought of as a discrete analogue of the result in \cite{DDM}.

\begin{Corollary}
The action of mapping class groups on  Teichm\"{u}ller space is statistically hyperbolic with respect to the Teichm\"{u}ller  metric.
\end{Corollary}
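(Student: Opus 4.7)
The plan is to reduce the statement to Theorem \ref{thm:stahyper} by checking that the action of $\mathbb{Mod}(\Sigma_g)$ on Teichm\"uller space $\mathcal{T}(\Sigma_g)$ (with the Teichm\"uller metric) is a statistically convex-cocompact action with a contracting element. Once these two hypotheses are verified, the conclusion $E_A(G,\Delta)=E_B(G)=2$ is immediate.

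First I would recall Minsky's result \cite{M} that a pseudo-Anosov element acts as a contracting isometry on $\mathcal{T}(\Sigma_g)$: its axis (a Teichm\"uller geodesic) stays in the thick part and satisfies the bounded projection property used in the definition of contracting subset given at the start of the paper. Since pseudo-Anosov elements exist in every non-elementary mapping class group, the action has a contracting element. The action is also proper (the mapping class group acts properly discontinuously on $\mathcal{T}(\Sigma_g)$) and isometric, so the basic framework of the paper applies.

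The main step, and the key obstacle, is to verify the SCC condition, i.e.\ that for suitable $M_1\le M_2$ one has $\delta_{\mathcal{O}_{M_1,M_2}}<\delta_G$. For this I would invoke the fine lattice-point counting of Eskin--Mirzakhani--Rafi \cite{EMR}, which gives asymptotic formulae $|B_n|\asymp e^{\delta_G n}$ for Teichm\"uller balls, together with a strictly smaller exponential bound for the number of orbit points whose connecting geodesic is forced to spend a definite proportion of time in the thin part. The concave set $\mathcal{O}_{M_1,M_2}$ is, by construction, the set of $g$ such that some geodesic from $o$ to $go$ has its interior outside $N_{M_1}(Go)$, and in the Teichm\"uller setting such excursions force the geodesic into the thin part; the EMR estimates then yield $\delta_{\mathcal{O}_{M_1,M_2}}<\delta_G$. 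This is precisely the verification already indicated in the text preceding Theorem \ref{thm:gen2}, so I would cite \cite{M} and \cite{EMR} to assemble it.

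Having established that the $\mathbb{Mod}(\Sigma_g)$-action on $\mathcal{T}(\Sigma_g)$ is SCC with a contracting element, I would apply the SCC case of Theorem \ref{thm:stahyper} directly: the action is statistically hyperbolic in balls and annuli with respect to the Teichm\"uller metric, which is the claim. No separate argument is needed for the particular form of $E_B(G)$ or $E_A(G,\Delta)$, since both equal $2$ by that theorem. The one genuinely non-formal input is the EMR counting, so I would phrase the proof to state the two ingredients clearly and then invoke Theorem \ref{thm:stahyper} as a black box.
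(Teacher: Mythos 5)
Your proof is correct and follows the same route the paper intends: cite Minsky for the contracting (pseudo-Anosov) element and Eskin--Mirzakhani--Rafi for the counting estimates giving the SCC condition, then apply the SCC case of Theorem~\ref{thm:stahyper}. This matches the paper's own remarks preceding Theorem~\ref{thm:gen2}, which establish that the $\mathbb{Mod}(\Sigma_g)$-action on $\mathcal{T}(\Sigma_g)$ is SCC with a contracting element.
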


Of course, the action of a group on the Cayley graph is SCC, so if there exists a contracting element, then it is statistically hyperbolic. This allows us to give   new examples of groups with statistically hyperbolic property in the original sense \cite{DLM}.

\begin{Corollary}
The following classes of groups are statistically hyperbolic with respect to word metrics.
\begin{enumerate}

\item a Gr'$(\frac{1}{6})$-labeled graphical small cancellation group $G$ with finite components labeled by a finite set $S$ acts on the Cayley graph $\mathscr{G}(G,S)$ with respect to the finite generating set $S$.
\item
Right-angled Artin (Coxeter) groups are statistically hyperbolic with respect to the standard generating set, if they are not virtually direct product.
\end{enumerate}

\end{Corollary}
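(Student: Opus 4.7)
The plan is to deduce this corollary as an immediate application of Theorem \ref{thm:stahyper}, following the same pattern already used for Corollary \ref{CorCayleyGraph}.

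First, I would observe that the action of a finitely generated group $G$ on its Cayley graph $\mathscr{G}(G,S)$ is cocompact (in fact transitive on vertices), which in turn makes it statistically convex-cocompact in the sense of Definition \ref{SCCDefn}: for any $M_2>0$ large enough that every point of $Y=\mathscr{G}(G,S)$ lies within $M_2$ of the orbit $Go$, the set $\mathcal{O}_{M_1,M_2}$ is empty for every $M_1\geq 0$, so trivially $\delta_{\mathcal{O}_{M_1,M_2}}=-\infty<\delta_G<\infty$. Hence every Cayley graph action of a finitely generated group automatically satisfies the SCC hypothesis of Theorem \ref{thm:stahyper}.

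Next, in both classes (1) and (2) a contracting element for the Cayley graph action is already available in the literature. For (1), the existence of contracting infinite order elements in a Gr'$(\frac{1}{6})$-labeled graphical small cancellation group acting on $\mathscr{G}(G,S)$ is an outcome of Arzhantseva--Cashen--Gruber--Hume \cite{ACGH}, as already recorded among the examples in the introduction. For (2), by \cite{BehC, BHS, ChaSul} a right-angled Artin (or Coxeter) group that is not virtually a direct product contains rank-$1$ elements with respect to the standard generating set, and rank-$1$ elements in a CAT($0$) group are contracting by the Bestvina--Fujiwara criterion \cite{BF}. Non-elementarity is automatic in both cases, since a group containing a contracting element cannot be virtually cyclic under the stated hypotheses.

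With both the SCC action and a contracting element in hand, the SCC form of Theorem \ref{thm:stahyper} applies and yields $E_A(G,\Delta)=E_B(G)=2$, which is precisely statistical hyperbolicity for the word metric in both families. I anticipate no serious obstacle: this corollary is essentially a repackaging of Theorem \ref{thm:stahyper} once the contracting-element inputs from the cited references are assembled, and the only small point of care is to cite the correct reference establishing that rank-$1$ elements yield contracting orbits in each family.
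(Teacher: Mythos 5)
Your proposal follows exactly the paper's intended route: observe that a Cayley graph action is cocompact and hence SCC, import the existence of contracting elements from \cite{ACGH} for graphical small cancellation groups and from \cite{BehC,BHS,ChaSul,BF} for right-angled Artin and Coxeter groups that are not virtually direct products, and then invoke the SCC case of Theorem \ref{thm:stahyper}. The paper itself gives the corollary as an immediate consequence of the one-line remark preceding it, and your write-up fills in precisely those details.

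One small slip to correct in the SCC verification: in Definition \ref{SCCDefn} the parameter $M_2$ governs the radii of the endpoint balls $B(o,M_2)$, $B(go,M_2)$, while $M_1$ governs the neighborhood $N_{M_1}(Go)$ that the interior of the geodesic must avoid; increasing $M_2$ only enlarges the family of eligible geodesics and hence cannot force $\mathcal{O}_{M_1,M_2}$ to be empty. The parameter you want to take large is $M_1$: for a cocompact action there is $R>0$ with $N_R(Go)=Y$, so choosing $M_1\geq R$ (and any $M_2\geq M_1$, as required by the convention $0\leq M_1\leq M_2$) makes the condition ``interior of $\gamma$ lies outside $N_{M_1}(Go)$'' vacuous, whence $\mathcal{O}_{M_1,M_2}$ is empty and $\delta_{\mathcal{O}_{M_1,M_2}}=-\infty<\delta_G$. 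With this correction the proposal matches the paper's argument without further change.
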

We point out that it is not clear to us whether the above two classes of groups   are    statistically hyperbolic   for every   generating set.  Note that they  include non-relatively hyperbolic examples of groups (cf. \cite{BDM, GruS}). Hence,  it would be interesting to know to which extent  the statistical hyperbolicity for every generating set characterizes the class of relatively hyperbolic groups.

\vspace{.5em}
\hspace{-1.5em}\textbf{The structure of this paper}\,\,Section \ref{Sect2} discusses the notions  and relevant facts of contracting elements, SCC actions and the DOP condition. The main technical contribution is given in Section \ref{sec:negligiblesubset} and provides   useful characteristics of several negligible sets. In Section \ref{SecProofs}, a generic set of elements is then singled out to complete the proofs of Main Theorems.

\section{ Preliminaries}\label{Sect2}

In this section, we will introduce some preliminaries. First we fix some notations and conventions.

\subsection{Notations and Conventions}

Let $(Y,d)$ be a proper geodesic metric space. The $r$ neighborhood of a subset $X\subseteq Y$ is   denoted by $N_r(X)$. We denote $\|X\|$ by the diameter of a subset $X\subseteq Y$ and $d_{Haus}(X_1,X_2)$ by the Hausdorff distance of two subsets $X_1,X_2\subseteq Y$. Given a point $y\in Y$, and a subset $X\subseteq Y$, let $\Pi_X(y)$ be the set of point $x$ in $X$ such that $d(y,x)=d(y,X)$. The \textit{projection} of a subset $A\subseteq Y$ to $X$ is then $\Pi_X(A):=\cup_{a\in A}\Pi_X(a)$.

The path $\gamma$  in $Y$ under consideration is always assumed to be rectifiable with arc-length parametrization $[0, |\gamma|]\to \gamma$, where $|\gamma|$ denotes the length of $\gamma$. Denote by $\gamma_-,\gamma_+$ the initial and terminal points of $\gamma$ respectively. For any two parameters $a<b\in [0, |\gamma|]$,  we   denote by $[\gamma(a),\gamma(b)]_\gamma:=\gamma([a,b])$ and $(\gamma(a),\gamma(b))_\gamma:=\gamma((a,b))$ the closed (resp. open) subpath of $\gamma$ between $a$ and $b$. For any $x,y\in Y$, we denote by $[x,y]$ a choice of geodesic in $Y$ from $x$ to $y$.

Given a property (P), a point $z$ on $\gamma$ is called the \textit{entry point} satisfying (P) if $|[\gamma_-,z]_\gamma|$ is minimal among the points $z$ on $\gamma$ with the property (P). A point $w$ on $\gamma$ is called the \textit{exit point} satisfying (P) if $|[w,\gamma_+]_\gamma|$ is minimal among the points $w$ on $\gamma$ with the property (P).

A path $\gamma$ is called a $(\lambda,c)$-quasi-geodesic for $\lambda\geq1,c\geq0$ if the following holds
\[|{\beta}|\leq\lambda\cdot d(\beta_-,\beta_+)+c\]
for any rectifiable subpath $\beta$ of $\alpha$.

Let $\beta,\gamma$ be two paths in $Y$. Denote by $\beta\cdot\gamma$ (or simply $\beta\gamma$) the concatenated path provided that $\beta_-=\gamma_+$.

Let $f,g$ be real-valued functions with domain understood in the context. Then $f\prec_{c_i}g$ means that there is a constant $a>0$ depending on parameters $c_i$ such that $f<ag$. The symbols $\succ_{c_i}$ and $\asymp_{c_i}$ are defined analogously. For simplicity, we shall omit $c_i$ if they are universal constants.

We say a sequence $\{a_n\}\subseteq \mathbb{R}$ of numbers converges to a number $\lambda\in\mathbb{R}$ \textit{exponentially fast}, denoted by $a_n \stackrel{exp}{\rightarrow} \lambda$,
if $$|\lambda-a_n|\leq c\theta^n$$ for some constant $\theta\in(0,1)$ and a positive constant $c>0$.

\begin{Remark}
\begin{enumerate}
\item It is clear that the (exponential) genericity is preserved by taking any finite intersection and finite union.   This fact shall be often used implicitly.
\item If $X$ is exponentially negligible, then $\delta_X<\delta_G$, by which we call $X$ \textit{growth tight} in \cite{Y1}. Note that if $G$ has purely exponentially growth, then a growth tight set is exponentially negligible. In this paper, the group actions under consideration always have purely exponentially growth, so we do not distinguish these two notions.
\end{enumerate}
\end{Remark}

\subsection{Contracting Property}

We fix a preferred class of quasi-geodesics $\mathcal{L}$, which contains at least all geodesics in $Y$.

\begin{Definition}[Contracting subset]
A subset $X\subseteq Y$ is called $\kappa$-\textit{contracting} with respect to $\mathcal{L}$ if for any quasi-geodesic $\gamma\in\mathcal{L}$ with $d(\gamma,X)\geq \kappa$, we have $\Pi_X(\gamma)\leq \kappa$.
A collection of $\kappa$-contracting subsets is referred to as a $\kappa$-\textit{contracting system} (with respect to $\mathcal{L}$).
\end{Definition}

We first note the following examples in various contexts.
\begin{Examples}
\begin{enumerate}
\item Quasi-geodesics and quasi-convex subsets are contracting with respect to the set of all
quasi-geodesics in hyperbolic spaces.
\item Fully quasi-convex subgroups (and in particular, maximal parabolic subgroups) are contracting with respect to the set of all quasi-geodesics in relatively hyperbolic groups(see \cite[Proposition 8.2.4]{GP}).
\item The subgroup generated by a hyperbolic element is contracting with respect to the set of all quasi-geodesics in groups with non-trivial Floyd boundary (see \cite[Section 7]{Y1}).
\item  Contracting segments in CAT(0)-spaces in the sense of Bestvina and Fujiwara are contracting here with respect to the set of geodesics (see \cite[Corollary 3.4]{BF}).
\item The axis of any pseudo-Anosov element is contracting relative to geodesics in Teichm\"{u}ller  spaces by Minsky \cite{M}.
\end{enumerate}
\end{Examples}

\begin{Convention}
In view of the above examples, the preferred collection $\mathcal{L}$ in the sequel will always be the set of all geodesics in $Y$.
\end{Convention}

The notion of a contracting subset is equivalent to the following one considered by Minsky \cite{M}. The proof given in \cite[Corollary 3.4]{BF} for CAT(0) spaces is valid in the general case. In this paper, we will always work with the above definition of the contracting property.

\begin{Lemma}
A subset $X$ is contracting in $Y$ if and only if any open ball $B$ missing $X$ has a uniformly bounded projection to $X$.
\end{Lemma}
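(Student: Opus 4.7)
The plan is to establish the two implications of the equivalence separately, the forward one being the easier.

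For the forward direction, I would take an open ball $B$ disjoint from $X$ and two points $p,q\in B$, and analyze the geodesic $\gamma=[p,q]$. If $\gamma$ stays outside $N_\kappa(X)$, then by the $\kappa$-contracting hypothesis $\|\Pi_X(\gamma)\|\leq\kappa$, and in particular $d(\Pi_X(p),\Pi_X(q))\leq\kappa$. Otherwise, let $z_1$ and $z_2$ be the entry and exit points of $\gamma$ into $N_\kappa(X)$; the outer subpaths $[p,z_1]_\gamma$ and $[z_2,q]_\gamma$ lie outside $N_\kappa(X)$, so applying the contracting property to each controls $d(\Pi_X(p),\Pi_X(z_1))$ and $d(\Pi_X(z_2),\Pi_X(q))$. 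Since $z_1,z_2\in N_\kappa(X)$, the triangle inequality then yields a bound on $d(\Pi_X(p),\Pi_X(q))$ depending only on $\kappa$, uniformly in $B$.

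For the converse, assume every open ball $B$ with $B\cap X=\emptyset$ projects to a set of diameter at most $C$. I would seek $\kappa'$ (expressible in terms of $C$) such that $d(\gamma,X)\geq\kappa'$ forces $\|\Pi_X(\gamma)\|\leq\kappa'$. The starting observation is that for every $z\in\gamma$ the open ball $B(z,d(z,X))$ is disjoint from $X$, and so has projection of diameter at most $C$. In particular, for any $p,q\in\gamma$ with $d(p,q)<d(p,X)$, both points lie in $B(p,d(p,X))$, giving $d(\Pi_X(p),\Pi_X(q))\leq C$ automatically.

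I expect the converse to be the main obstacle: a naive chain-of-balls argument based on this local bound degrades linearly in $|\gamma|$ and is useless. My plan is to argue by contradiction: if $\|\Pi_X(\gamma)\|$ were very large compared with $C$, I would extract a pair of points $p,q\in\gamma$ with $d(\Pi_X(p),\Pi_X(q))>C$ and then construct a single open ball containing both $p$ and $q$ yet disjoint from $X$, violating the uniform bound. This step uses the full strength of the hypothesis that $C$ bounds the projection of open balls of arbitrary radius; without the ``arbitrary radius'' feature the argument collapses. The delicate point, where I expect the bulk of the technical work, is the geometric selection of this contradicting ball from the data $p,q$ and $\gamma$; the argument will mimic the Bestvina--Fujiwara proof \cite{BF} cited in the statement of the lemma.
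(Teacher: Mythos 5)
The paper offers no proof of this lemma: the surrounding text cites \cite[Corollary 3.4]{BF} and asserts the CAT(0) argument carries over unchanged, so there is nothing in the paper to compare your attempt against and it must stand on its own.

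Your forward direction, which you present as the easy one, has a genuine gap in Case 2. Setting $z_1,z_2$ to be the entry and exit points of $\gamma=[p,q]$ into $N_\kappa(X)$, the fact $z_i\in N_\kappa(X)$ only bounds $d(z_i,X)$, not $d(z_1,z_2)$, so the triangle-inequality step yields at best $d(\Pi_X(p),\Pi_X(q))\le 4\kappa + d(z_1,z_2)$, which is not uniform: a priori the middle subsegment $[z_1,z_2]_\gamma$ could fellow-travel $X$ at distance about $\kappa$ for a length on the order of $d(p,q)<2R$, and $R$ is unbounded. Notice that the hypothesis that $B$ is disjoint from $X$ never enters your argument; your two cases apply verbatim to an arbitrary pair $p,q$, for which the asserted bound is clearly false. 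The ball hypothesis is used through the center: write $B=B(y,R)$, so that $d(y,X)\ge R$ while $d(y,p)<R$ for every $p\in B$. If $[y,p]$ meets $N_\kappa(X)$, its first entry point $w$ satisfies $d(y,w)\ge d(y,X)-\kappa\ge R-\kappa$, hence $d(w,p)<\kappa$ and $d(p,X)<2\kappa$; combining the $\kappa$-bound on $\|\Pi_X([y,w])\|$ from the contracting hypothesis with the short tail $[w,p]$ and with $\|\Pi_X(y)\|\le\kappa$ gives $d(\Pi_X(y),\Pi_X(p))\prec\kappa$ uniformly over $p\in B$. The inequality $d(y,p)<R\le d(y,X)$, available only for the center, is what makes the argument close; your decomposition through $[p,q]$ has no analogue of it. As for the converse, you offer a plan rather than a proof: the natural contradicting ball around the midpoint of $[p,q]_\gamma$ with radius slightly above $d(p,q)/2$ is disjoint from $X$ only when $d(p,q)<2d(\gamma,X)$, which is precisely the regime where chaining already works, so pairs far apart along $\gamma$ still need an additional idea — exactly the step you defer to \cite{BF}.
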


We collect some properties of contracting sets that will be used later on. The proof is straightforward and is left to the interested reader.

\begin{Lemma}\label{lemma:prop}
Let $X$ be a contracting set.
\begin{enumerate}
\item (Quasi-convexity) $X$ is $\sigma$-quasi-convex for a function $\sigma:\mathbb{R}_{\geq0}\rightarrow\mathbb{R}_+$: given $r\geq0$, any geodesic with endpoints in $N_r(X)$ lies in the neighborhood $N_{\sigma(r)}(X)$.
\item (Finite neighborhood) Let $Z$ be a set with finite Hausdorff distance to $X$. Then $Z$ is contracting.
\item there exists a constant $C>0$ such that  for any geodesic segment $\gamma$,
\begin{equation} \label{eq:pointproj}
\big|\|\Pi_X(\{\gamma_-,\gamma_+\})\|-\|\Pi_X(\gamma)\|\big|\leq {C}.
\end{equation}
\end{enumerate}
\end{Lemma}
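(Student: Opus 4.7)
The plan is to prove (1) first as the core technical statement, then leverage it to obtain (3), and handle (2) by routine bookkeeping with the Hausdorff data.

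For (1), fix $r\geq 0$ and a geodesic $\gamma$ with $\gamma_\pm \in N_r(X)$, and set $r'=\max(r,\kappa)$. Suppose some $p\in\gamma$ satisfies $d(p,X) > K$ for a threshold $K$ to be chosen. Let $u$ be the last point on $\gamma$ before $p$, and $v$ the first after $p$, with $d(\cdot, X)\leq r'$; both exist since $\gamma_\pm \in N_r(X)\subseteq N_{r'}(X)$, and by continuity $d(u,X)=d(v,X)=r'$. The sub-arc $[u,v]_\gamma$ (which contains $p$) satisfies $d(\cdot, X)\geq r' \geq \kappa$, so the contracting hypothesis gives $\|\Pi_X([u,v]_\gamma)\|\leq \kappa$ and in particular $d(\Pi_X(u),\Pi_X(v))\leq \kappa$. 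Triangle through the projections yields $d(u,v)\leq 2r'+\kappa$; on the other hand $d(u,v)=d(u,p)+d(p,v)\geq 2(K-r')$ since $[u,v]_\gamma$ is itself geodesic. Combining gives $K\leq 2r'+\kappa/2$, a contradiction once $K>2r'+\kappa$; thus $\sigma(r):=2\max(r,\kappa)+\kappa$ suffices.

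For (3), the inequality $\|\Pi_X(\{\gamma_-,\gamma_+\})\|\leq\|\Pi_X(\gamma)\|$ is immediate from $\{\gamma_-,\gamma_+\}\subseteq\gamma$. For the reverse direction, if $\gamma\cap\overline{N_\kappa(X)}=\emptyset$ then contracting directly gives $\|\Pi_X(\gamma)\|\leq\kappa$. Otherwise, let $u,v$ be the first and last points of $\gamma$ in $\overline{N_\kappa(X)}$. The tails $[\gamma_-,u]_\gamma$ and $[v,\gamma_+]_\gamma$ have $d(\cdot,X)\geq\kappa$ everywhere (equality only at $u$ resp.\ $v$), so contracting gives $\|\Pi_X([\gamma_-,u]_\gamma)\|,\|\Pi_X([v,\gamma_+]_\gamma)\|\leq\kappa$, and in particular $d(\Pi_X(\gamma_-),\Pi_X(u))$ and $d(\Pi_X(v),\Pi_X(\gamma_+))$ are both at most $\kappa$. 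The middle arc $[u,v]_\gamma$ has endpoints in $N_\kappa(X)$, so by (1) it lies in $N_{\sigma(\kappa)}(X)$; every $p\in[u,v]_\gamma$ then satisfies $d(p,\Pi_X(p))\leq\sigma(\kappa)$, and a triangle estimate gives
\[\|\Pi_X([u,v]_\gamma)\|\leq d(u,v)+2\sigma(\kappa)\leq d(\Pi_X(u),\Pi_X(v))+2\kappa+2\sigma(\kappa).\]
Since adjacent sub-projections share the common point $\Pi_X(u)$ or $\Pi_X(v)$, the diameter of $\Pi_X(\gamma)$ is bounded by the sum of the three sub-diameters; a final triangle step $d(\Pi_X(u),\Pi_X(v))\leq 2\kappa+d(\Pi_X(\gamma_-),\Pi_X(\gamma_+))$ then yields $\|\Pi_X(\gamma)\|\leq \|\Pi_X(\{\gamma_-,\gamma_+\})\|+2\sigma(\kappa)+6\kappa$, providing the desired universal $C$. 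The main obstacle here is precisely controlling the middle arc: a priori it can excurse arbitrarily far from $X$ and the triangle-based bound on $\|\Pi_X([u,v]_\gamma)\|$ would degenerate with the size of the excursion, so quasi-convexity from (1) is exactly the ingredient that pins the arc to a uniform neighborhood of $X$ once its endpoints are.

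For (2), let $Z$ satisfy $d_{Haus}(X,Z)\leq D$ and take a geodesic $\gamma$ with $d(\gamma,Z)\geq\kappa+D$. By the Hausdorff bound, $d(\gamma,X)\geq\kappa$, so $\|\Pi_X(\gamma)\|\leq\kappa$. For any $z_1,z_2\in\Pi_Z(\gamma)$, choose $x_i\in X$ with $d(z_i,x_i)\leq D$: these are $2D$-near-projections to $X$ of the corresponding $\gamma$-points. A short calculation using contracting on an auxiliary geodesic (or the ball-characterization of contracting applied to the shrunken ball $B(\cdot,R-D)$) bounds the diameter of such near-projections by $\|\Pi_X(\gamma)\|+O(D)\leq\kappa+O(D)$, and hence $d(z_1,z_2)\leq\kappa+O(D)$, showing $Z$ is $(\kappa+O(D))$-contracting.
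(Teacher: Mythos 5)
The paper explicitly declines to prove this lemma, stating ``The proof is straightforward and is left to the interested reader,'' so there is no argument in the paper to compare against. Your proof is correct, and the structure is natural: (1) by a continuity-plus-contracting argument, (3) by splitting $\gamma$ at the first and last entries into $\overline{N_\kappa(X)}$ and feeding the middle arc through (1), and (2) by converting near-projections to $Z$ into near-projections to $X$.

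Two small remarks on the write-up. In (1), the conclusion is actually $K<2r'+\kappa/2$, so you could take $\sigma(r)=2\max(r,\kappa)+\kappa/2$; the version you state is safe but not tight. In (2), the step ``a short calculation $\ldots$ bounds the diameter of such near-projections by $\|\Pi_X(\gamma)\|+O(D)$'' is the one place the argument is genuinely compressed. What the calculation actually produces is the following: for $p\in\gamma$, $z=\Pi_Z(p)$, and $x\in X$ with $d(z,x)\le D$, take a geodesic $[p,x]$, let $q$ be its entry point into $N_\kappa(X)$, apply contracting to $[p,q]$ to get $\|\Pi_X([p,q])\|\le\kappa$, and observe $d(q,x)\le 2D+\kappa$ since $d(p,x)\le d(p,X)+2D$ and $d(p,q)\ge d(p,X)-\kappa$; chaining the estimates gives $d(x,\Pi_X(p))\le 2D+3\kappa$, and hence $\|\Pi_Z(\gamma)\|\le 6D+7\kappa$. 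So the final bound depends on $\kappa$ as well as $D$, not just $\|\Pi_X(\gamma)\|+O(D)$ as written, but that does not affect the conclusion: $Z$ is $\kappa'$-contracting for any $\kappa'\ge 6D+7\kappa$.
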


In most situation, we are interested in a contracting system $\mathbb{X}$ with a $\nu$-\textit{bounded intersection} for a function $\nu:\mathbb{R}_{\geq0}\rightarrow\mathbb{R}_{\geq0}$ if the following holds
\[\forall X\neq X'\in\mathbb{X},\,\|N_r(X)\cap N_r(X')\|\leq\nu(r)\]
for any $r\geq0$. This property is, in fact, equivalent to a bounded projection property of $X$: there exists a constant $B>0$ such that $$\Pi_X(X')\leq B$$ for $X\neq X'\in\mathbb{X}$. See \cite{Y1} for further discussions.

An infinite subgroup $H<G$ is called \textit{contracting} if for some (hence any by \cite[Proposition 2.4.2]{Y}) $o\in Y$, the subset $Ho$ is contracting in $Y$.

An element $h\in G$ is called \textit{contracting} if the subset $\langle h\rangle o$ is contracting, and the orbital map $n\in\mathbb{Z}\mapsto h^no\in Y$ is a quasi-isometric embedding. The set of contracting elements is preserved under conjugacy.

Let $H$ be a contracting subgroup. We define a group $E(H)$ as follows:
\[E(H):=\{g\in G:\exists r>0,gHo\subseteq N_r(Ho),Ho\subseteq N_r(gHo)\}.\]

For a contracting element $h$, we have the following result about $E(h):=E(\langle h\rangle)$  (see \cite[Lemma 2.11]{Y}).

\begin{Lemma}\label{lemma:finiteindex}
Assume that $G$ acts properly on $(Y,d)$. For a contracting element $h$, the following statements hold:
\begin{enumerate}
\item $E(h)=\{g\in G:\exists n>0,gh^ng^{-1}=h^n\,or\,gh^ng^{-1}=h^{-n}\}$.
\item $[E(h):\langle h\rangle]<\infty$, and $E(h)$ is a contracting subgroup with bounded intersection.
\end{enumerate}
\end{Lemma}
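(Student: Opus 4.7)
The plan is to handle the two parts in sequence, using properness of the action combined with the contracting behavior of $\langle h\rangle o$.

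For part (1), the right-to-left inclusion is essentially formal: if $gh^ng^{-1} = h^{\pm n}$, then $gh^{kn}o = h^{\pm kn}go$ lies in the $|g|$-neighborhood of $\langle h\rangle o$ for every $k$, and the remaining orbital points $gh^{kn+j}o$ with $0 \le j < n$ differ from these by a bounded amount, so $g\langle h\rangle o \subseteq N_r(\langle h\rangle o)$ for a suitable $r$; the reverse inclusion is symmetric. The substantive direction assumes $g \in E(h)$ and needs to produce the $n$. I first observe that $(ghg^{-1})^k o = g h^k (g^{-1}o)$ stays in a bounded neighborhood of $\langle h\rangle o$, since $g^{-1} o$ is within $r$ of $\langle h\rangle o$; hence $ghg^{-1}$ is contracting with quasi-axis at bounded Hausdorff distance from that of $h$. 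Using the quasi-isometric parametrization $n \mapsto h^n o$ of $\langle h\rangle o$, conjugation by $g$ induces a quasi-isometry of $\mathbb{Z}$ that commutes with the $h$-translation up to bounded error; any such map has the form $n \mapsto \pm n + c$ modulo bounded error. This yields a coarse homomorphism $E(h) \to \mathbb{Z} \rtimes \{\pm 1\}$ whose kernel consists of elements moving every point of $\langle h\rangle o$ by a bounded amount, hence a finite set by properness. Killing the finite order in the image produces an $n$ with $g h^n g^{-1} = h^{\pm n}$.

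For part (2), the finite-index statement follows from the orbital picture above: given $g \in E(h)$, one can pick $k$ with $d(h^k go, o) \le r + |h|$, so the representatives $\{h^k g\}$ for the cosets $\langle h\rangle g$ all lie in a fixed ball, and properness forces finiteness. Consequently $E(h)o$ is a finite union of $\langle h\rangle$-translates, so $d_{Haus}(E(h)o,\langle h\rangle o)<\infty$, and Lemma~\ref{lemma:prop}(2) immediately gives that $E(h)o$ is contracting. For the bounded intersection property, I argue contrapositively: if $\|N_r(E(h)o) \cap N_r(gE(h)o)\|$ grew without bound in $r$ for some $g$, then the two contracting quasi-axes would fellow-travel along arbitrarily long stretches. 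Repeating the axis-matching argument from part (1) would then show that $g$ conjugates some power of $h$ into $\langle h\rangle$, hence $g \in E(h)$, contradicting $gE(h) \ne E(h)$.

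The main obstacle I anticipate is the axis-matching step underlying both (1) and (2): promoting the coarse invariance ``$g$ translates $\langle h\rangle o$ into a uniformly bounded neighborhood of itself'' into the on-the-nose algebraic conclusion ``$g h^n g^{-1} = h^{\pm n}$.'' This is where properness of the action, the quasi-isometric orbit parametrization $n \mapsto h^n o$, and the bounded-projection property of the contracting set must be combined to upgrade a bounded-error statement to equality after passing to a sufficiently high power of $h$.
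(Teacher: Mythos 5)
The paper does not prove this lemma; it is imported verbatim from \cite[Lemma 2.11]{Y}, so there is no in-text argument to compare against. Judged on its own, your outline has the right ingredients but the pivotal step in part (1) is not an argument yet, and you have correctly flagged it yourself.

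The gap is exactly the ``killing the finite order in the image'' step. A ``coarse homomorphism'' $E(h)\to\mathbb{Z}\rtimes\{\pm1\}$ is not a homomorphism, so it has no kernel or image in the group-theoretic sense, and there is no torsion to quotient out; the sentence does not produce a power $n$ for which $gh^ng^{-1}=h^{\pm n}$ holds on the nose. The standard way to close this is either of the following. (a) Use properness directly on the orbit: for $g\in E(h)$ choose, for each $k\in\mathbb{Z}$, an integer $\sigma(k)$ with $d(gh^ko,h^{\sigma(k)}o)\leq r$; then $\{h^{-\sigma(k)}gh^k:k\in\mathbb{Z}\}$ sends $o$ into a fixed ball, hence is finite by properness, so two of them coincide, giving an exact relation $gh^ng^{-1}=h^m$ with $n>0$; comparing stable translation lengths (both sides are conjugate resp.\ equal to powers of $h$, and $k\mapsto h^ko$ is a QI embedding) forces $|m|=|n|$, i.e.\ $m=\pm n$. (b) Prove the finite-index statement first, exactly by the coset argument you give in part (2); then the normal core of $\langle h\rangle$ in $E(h)$ is a finite-index normal subgroup contained in $\langle h\rangle\cong\mathbb{Z}$, hence equal to $\langle h^m\rangle$ for some $m$; conjugation by any $g\in E(h)$ is an automorphism of $\langle h^m\rangle\cong\mathbb{Z}$, so $gh^mg^{-1}=h^{\pm m}$. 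Either route replaces the coarse-homomorphism heuristic with a genuine deduction. Note that (b) inverts your order of proof: the clean dependency is finite index $\Rightarrow$ algebraic characterization, not the reverse.

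Your treatment of bounded intersection is also more of a plan than a proof. Unbounded $\|N_r(E(h)o)\cap N_r(gE(h)o)\|$ gives fellow-travelling on arbitrarily long segments, not coincidence of the two sets at bounded Hausdorff distance, so you cannot immediately re-run the part (1) argument. The missing ingredient is cocompactness of $\langle h\rangle$ (hence of $E(h)$) on $E(h)o$: translate the long overlap by a power of $h$ back near $o$, apply properness to get finitely many candidate elements realizing the overlap, and then let the overlap length go to infinity to force one candidate $g'$ to satisfy $g'E(h)o$ within bounded Hausdorff distance of $E(h)o$, i.e.\ $g'\in E(h)$, a contradiction. This is the same properness-plus-cocompactness mechanism you invoke elsewhere, but it needs to be said; ``repeating the axis-matching argument'' glosses over the passage from a long partial overlap to a global one.
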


The contracting subset $\ax(h):=\{f\cdot o:f\in E(h)\}$ shall be referred to as the \textit{axis} of $h$. In the following  discussion, so we denote $A=\ax(h)$ for simplicity.

\begin{Lemma}\label{lemma:proj}
For any $C>0$, let $\gamma$ be a geodesic with interior dose not meet $N_{{C}}(A)$. Then
\[d_{Haus}(\Pi_{N_{{C}}(A)}(\gamma),\Pi_{A}(\gamma))\leq  {C}.\]
In particular, if $ {C}$ is a contracting constant of $A$, then we have $\Pi_{N_{ {C}}(A)}(\gamma)\leq 3 {C}$.
\end{Lemma}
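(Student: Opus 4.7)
The plan is to argue pointwise: for every $p\in\gamma$ I will produce a correspondence between $\Pi_A(p)$ and $\Pi_{N_C(A)}(p)$ whose error bounds their Hausdorff distance by $C$. Taking the union over $p\in\gamma$ then immediately yields the first conclusion, and the ``in particular'' diameter estimate follows from this Hausdorff bound together with the defining contracting property of $A$.

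For the pointwise comparison, fix $p\in\gamma$. In the easy case $d(p,A)\le C$, the point $p$ itself realizes $\Pi_{N_C(A)}(p)=\{p\}$, and any $a\in\Pi_A(p)$ satisfies $d(p,a)\le C$, so the pointwise Hausdorff distance is $\le C$. In the case $d(p,A)>C$, for each $a\in\Pi_A(p)$ I slide along a geodesic $[p,a]$ to the unique point $q$ with $d(q,a)=C$. The key calculation is that $a$ remains a nearest point of $A$ to $q$: for any $a'\in A$, the triangle inequality together with $a\in\Pi_A(p)$ gives
\[ d(q,a')\;\ge\;d(p,a')-d(p,q)\;\ge\;d(p,a)-d(p,q)\;=\;d(q,a). \]
Thus $d(q,A)=C$, so $q\in N_C(A)$; moreover an infimum computation over $N_C(A)$ shows that $d(p,N_C(A))=d(p,A)-C=d(p,q)$, so $q\in\Pi_{N_C(A)}(p)$ and $d(q,a)=C$. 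The reverse correspondence runs the same inequalities in the opposite direction: given $q\in\Pi_{N_C(A)}(p)$, pick $a'\in A$ with $d(q,a')\le C$; then $d(p,a')\le d(p,q)+C=d(p,A)$ forces $a'\in\Pi_A(p)$, and $d(q,a')=C$ falls out.

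For the ``in particular'' claim, note that because the interior of $\gamma$ avoids $N_C(A)$, continuity of the function $d(\cdot,A)$ along $\gamma$ forces $d(\gamma,A)\ge C$. Since $C$ is a contracting constant of $A$, the definition of contracting subset applied to the geodesic $\gamma\in\mathcal{L}$ gives $\|\Pi_A(\gamma)\|\le C$. Taking the union over $p\in\gamma$ in the pointwise estimate above yields $d_{Haus}(\Pi_A(\gamma),\Pi_{N_C(A)}(\gamma))\le C$, and hence $\|\Pi_{N_C(A)}(\gamma)\|\le \|\Pi_A(\gamma)\|+2C\le 3C$, as claimed.

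The chief subtlety is verifying that the slid point $q$ simultaneously satisfies $d(q,A)=C$ and $d(p,q)=d(p,N_C(A))$; both drop out of the same triangle-inequality manipulations, but they must be kept straight so that the nearest-point conditions to $A$ and to $N_C(A)$ are genuinely realized. Once this is in place, the rest is routine.
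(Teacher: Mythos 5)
Your argument is correct and follows essentially the same route as the paper: fix a point $p\in\gamma$, slide from a nearest point in $A$ along the geodesic to $p$ by distance $C$ to land in $\Pi_{N_C(A)}(p)$, and run the triangle-inequality estimates in both directions to get the two inclusions bounding the Hausdorff distance. Your additional handling of the boundary case $d(p,A)\le C$ and your explicit derivation of the ``in particular'' bound from $d(\gamma,A)\ge C$, the $C$-contraction of $A$, and the Hausdorff estimate are details the paper leaves implicit, but they match the intended reasoning.
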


\begin{proof}
	For any $x\notin N_{C}(A)$, it is sufficient to prove \[d_{Haus}(\Pi_{N_{C}(A)}(x),\Pi_{A}(x))\leq C.\]
	
	For any $y\in\Pi_{A}(x)$, take some $[x,y]$. Let $z$ be the point of $[x,y]$ such that $d(y,z)=C$. Now for each $z'\in N_{C}(A)$, there exists $y'\in A$ such that $d(y',z')\leq C$. Since
	\[d(x,z')+d(z',y')\geq d(x,y')\geq d(x,y)=d(x,z)+d(z,y),\]
	we have $d(x,z')\geq d(x,z)$, which means that $z\in \Pi_{N_{C}(A)}(x)$. Thus
	\[\Pi_{A}(x)\subseteq N_{C}(\Pi_{N_{C}(A)}(x)).\]
	
	Let $z\in \Pi_{N_{C}(A)}(x)$. Since $z\in N_{C}(A)$, there exists $y\in A$ so that $d(y,z)\leq C$.
	Take some $y'\in \Pi_{A}(x)$, then there exists $z'\in \Pi_{N_{C}(A)}(x)$ so that $d(x,y')=d(x,z')+C$ by the above discussion. Then
	\[d(x,y)\leq d(x,z)+d(y,z)\leq d(x,z)+C=d(x,z')+C=d(x,y').\]
	So $y\in \Pi_{A}(x)$, $\Pi_{N_{C}(A)}(x)\subseteq N_{C}(A)$.
\end{proof}

\begin{Lemma}\label{lemma:notint}
Let $C>0$ be the contraction constant of $A$  and $\alpha,\beta$ be two geodesics with the same initial endpoint. If $x$ is the entry point of $\alpha$ into $N_{ {C}}(A)$ and  $\beta\cap B(x,4 {C})=\emptyset$, then $\beta\cap N_{ {C}}(A)=\emptyset$.
\end{Lemma}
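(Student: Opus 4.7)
The plan is to argue by contradiction: suppose $\beta\cap N_C(A)\neq\emptyset$, and let $y$ be the entry point of $\beta$ into $N_C(A)$. I will show that this forces $d(x,y)\le 4C$, contradicting $y\in\beta$ together with $\beta\cap B(x,4C)=\emptyset$. The preliminary observation is that $\alpha_-=\beta_-$ satisfies $d(\alpha_-,x)\ge 4C>C$, because $\alpha_-\in\beta$ is excluded from $B(x,4C)$; in particular $\alpha_-\notin N_C(A)$. Consequently both segments $[\alpha_-,x]_\alpha$ and $[\beta_-,y]_\beta$ are nontrivial geodesics whose interiors avoid $N_C(A)$ and whose terminal endpoints lie in $N_C(A)$.

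The main step is a comparison of projections onto the axis $A$. Applying the contracting property to closed subsegments of $[\alpha_-,x]_\alpha$ whose distance from $A$ is strictly greater than $C$, and letting these subsegments exhaust $[\alpha_-,x]_\alpha$, one obtains the diameter bound $\|\Pi_A([\alpha_-,x]_\alpha)\|\le C$; the same reasoning gives $\|\Pi_A([\beta_-,y]_\beta)\|\le C$. Since $\alpha_-=\beta_-$, fix a common projection point $q\in\Pi_A(\alpha_-)=\Pi_A(\beta_-)$. The first diameter bound supplies some $q_x\in\Pi_A(x)$ with $d(q,q_x)\le C$, and the second supplies some $q_y\in\Pi_A(y)$ with $d(q,q_y)\le C$. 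Because $x,y\in N_C(A)$, also $d(x,q_x)\le C$ and $d(y,q_y)\le C$. Four applications of the triangle inequality then yield $d(x,y)\le 4C$, which is the promised contradiction.

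The main obstacle I expect is that the contracting property is stated for quasi-geodesics at distance $\ge C$ from $A$, whereas the endpoint $x$ (respectively $y$) already sits in $N_C(A)$, so the segment $[\alpha_-,x]_\alpha$ (respectively $[\beta_-,y]_\beta$) fails this distance requirement globally. One must therefore pass through a limiting argument: apply the contracting property to closed subsegments $[\alpha_-,x_\varepsilon]_\alpha$ with $d(x_\varepsilon,A)>C$, let $x_\varepsilon\to x$, and invoke properness of $(Y,d)$ to extract a convergent subsequence of closest-point projections whose limit lies in $\Pi_A(x)$. An alternative route would be to invoke Lemma \ref{lemma:proj} directly on the full closed segments, working with $\Pi_{N_C(A)}$ in place of $\Pi_A$; this would lose a factor and yield only $d(x,y)\le 6C$, so obtaining the stated constant $4C$ does seem to require the sharper estimate on $\Pi_A$ itself rather than on $\Pi_{N_C(A)}$.
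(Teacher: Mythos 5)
Your proposal is correct and follows essentially the same route as the paper's proof: argue by contradiction, take the entry point $y$ of $\beta$ into $N_C(A)$, and bound $d(x,y)\le 4C$ by chaining through projections of the two segments $[\alpha_-,x]_\alpha$ and $[\beta_-,y]_\beta$ to $A$ via a common projection of $\alpha_-=\beta_-$. The paper compresses this into a single line,
\[
d(x,y)\leq C+\|\Pi_{A}([\alpha_-,x]_{\alpha})\|+\|\Pi_{A}([\beta_-,y]_{\beta})\|+C\leq 4C,
\]
silently using the projection-diameter bound $\le C$ on those segments; you spell out the same estimate and, correctly, flag the one technical point the paper leaves implicit — that the contracting property is stated for paths at distance $\ge C$ from $A$ while the closed segment $[\alpha_-,x]_\alpha$ touches $N_C(A)$ at its endpoint — and your proposed fix (exhaust by subsegments $[\alpha_-,x']_\alpha$ strictly before $x$, then use properness to pass a subsequence of projection points to the limit in $\Pi_A(x)$) is the right way to justify that step. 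Your remark that the coarser detour through Lemma~\ref{lemma:proj} and $\Pi_{N_C(A)}$ would only give $6C$ is also accurate, and explains why the sharper estimate on $\Pi_A$ itself is what yields the stated constant.
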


\begin{proof}
If $\beta\cap N_{ {C}}(A)\neq\emptyset$, then let $y\in\beta$ be the entry point of $\beta$ in $N_{ {C}}(A)\neq\emptyset$. We have
\[d(x,y)\leq  {C}+\|\Pi_{A}([\alpha_-,x]_{\alpha})\|+\|\Pi_{A}([\beta_-,y]_{\beta})\|+ {C}\leq4 {C}\]
which proves the lemma.
\end{proof}

 Since $gN_{ {C}}(A)=N_{ {C}}(gA)$ for every $g\in G$,  the following lemma is a consequence of Lemma \ref{lemma:finiteindex}, Lemma \ref{lemma:proj} and Lemma \ref{lemma:prop}.

\begin{Lemma}\label{lemma:strongcontracting}
For any ${C}\geq 0$,  the collection $\mathbb{X}=\{gN_{{C}}(A): g\in G\}$ is a contracting system with bounded projection.
\end{Lemma}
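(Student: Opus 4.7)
The statement packages two assertions: (i) each set $gN_C(A)$ is contracting with a constant independent of $g$, and (ii) the collection has uniformly bounded projection between any two distinct members. My plan is to derive both from the corresponding properties of $\{gA : g\in G\}$ already established in Lemma~\ref{lemma:finiteindex}, exploiting the elementary identity $gN_C(A)=N_C(gA)$, which shows each member of $\mathbb{X}$ differs from a translate $gA$ by Hausdorff distance at most $C$.

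For the uniform contracting property, since $G$ acts by isometries the translate $gA$ is contracting with exactly the same constant as $A$ (which is itself contracting because $E(h)$ is a contracting subgroup by Lemma~\ref{lemma:finiteindex}(2), so that $A=E(h)\cdot o$ is a contracting subset). Applying Lemma~\ref{lemma:prop}(2) to the Hausdorff-close set $N_C(gA)=gN_C(A)$ then yields contracting with a constant depending only on $C$ and on the contracting constant of $A$, hence uniform in $g$.

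For bounded projection, the cleanest route is to verify bounded intersection and invoke its equivalence with bounded projection, which is recorded in the excerpt right after the definition of bounded intersection. Suppose $gN_C(A)\neq g'N_C(A)$; by contraposition this forces $gA\neq g'A$. For any $r\geq 0$, working in the geodesic space $Y$ we have
\[
N_r\bigl(gN_C(A)\bigr)\cap N_r\bigl(g'N_C(A)\bigr)=N_{r+C}(gA)\cap N_{r+C}(g'A),
\]
whose diameter is at most $\nu(r+C)$ by the bounded intersection of $\{gA : g\in G\}$ supplied by Lemma~\ref{lemma:finiteindex}(2). This furnishes bounded intersection for $\mathbb{X}$ with parameter function $r\mapsto \nu(r+C)$, and the cited equivalence converts this to bounded projection with a constant $B'$ depending only on $C$ and on the data for $A$.

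I do not expect real obstacles: the argument is essentially bookkeeping, and the one point requiring attention is making sure every resulting constant is uniform in $g$, which is automatic since $g$ enters only through isometric translation. Lemma~\ref{lemma:proj} is not strictly needed here, but it provides an alternative route: one could bound $\Pi_{N_C(A)}(g'N_C(A))$ directly by replacing each $y\in g'N_C(A)$ by a nearby $y'\in g'A$, applying the bounded projection for $\{gA\}$ to control $\Pi_A(y')$, and then returning from $\Pi_A$ to $\Pi_{N_C(A)}$ via the Hausdorff comparison of Lemma~\ref{lemma:proj}. The bounded-intersection approach above is shorter and avoids case analysis on whether $y$ does or does not lie in $N_C(A)$.
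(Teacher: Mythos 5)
Your argument is correct. The paper itself gives no details, merely declaring the lemma a consequence of Lemma~\ref{lemma:finiteindex}, Lemma~\ref{lemma:proj} and Lemma~\ref{lemma:prop}; the citation of Lemma~\ref{lemma:proj} suggests the intended route was to pass back and forth between $\Pi_{N_C(A)}$ and $\Pi_A$ directly, which is the ``alternative route'' you sketch at the end. Your primary route is a genuine streamlining: since $gN_C(A)=N_C(gA)$, and since in a geodesic space $N_r(N_C(X))=N_{r+C}(X)$, bounded intersection for $\{gA\}$ (supplied by Lemma~\ref{lemma:finiteindex}(2)) transfers immediately to $\mathbb{X}$ with the shifted function $r\mapsto\nu(r+C)$, and the paper's stated equivalence of bounded intersection with bounded projection finishes the job. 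This avoids both the Hausdorff comparison of projections in Lemma~\ref{lemma:proj} and the case analysis on whether a point of $g'N_C(A)$ already lies in $N_C(A)$. The one small point worth being explicit about is that $gN_C(A)\neq g'N_C(A)$ does force $gA\neq g'A$ (you correctly note this is the contrapositive of ``$gA=g'A\Rightarrow N_C(gA)=N_C(g'A)$''), since the bounded-intersection hypothesis for $\{gA\}$ only applies to distinct members. The contracting constant part, via Lemma~\ref{lemma:prop}(2) and the uniform Hausdorff distance $C$, is exactly the standard argument and matches the paper's intent.
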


\subsection{Admissible Path}
Let $\mathbb X$ be a contracting system with a bounded intersection property. The following notion of an admissible path will be used to obtain   a quasi-geodesic path.
\begin{Definition}[Admissible Path]Given $D,\tau\geq0$ and a function $\mathcal{R}:\mathbb{R}_{\geq0}\rightarrow\mathbb{R}_{\geq0}$, a path $\gamma$ is called $(D,\tau)$-\textit{admissible} in $Y$, if $\gamma$ is a concatenation of geodesic subpaths $p_0q_1p_1\cdots q_np_n$ $(n\in\mathbb{N})$($p_0,p_n$ could be trivial), the endpoints of $p_i$ are in some $X_i\in \mathbb{X}$ for each $i$, and satisfies the following called \textit{Long Local} and \textit{Bounded Projection} properties:
\begin{enumerate}
\item[(LL1)] Each $p_i$ has length bigger than $D$, except that $(p_i)_-=\gamma_-$ or $(p_i)_+=\gamma_+$;
\item[(BP)] For each $X_i$, we have $\max\{\|\Pi_{X_i}(q_i)\|,\|\Pi_{X_i}(q_{i+1})\|\}\leq\tau$, where $q_0:=\gamma_-$ and $q_{n+1}:=\gamma_+$ by convention;
\item[(LL2)]Either $X_i\neq X_{i+1}$ have $\mathcal{R}$-bounded intersection or $q_{i+1}$ has length bigger than $D$.
\end{enumerate}
\end{Definition}

We need the following result from \cite[Corollary 3.2]{Y1}.

\begin{Proposition}\label{pro:quasi-geo}
Let $\kappa$ be the contraction constant of $\mathbb{X}$. For any $\tau>0$, there are constants $D_0=D_0(\kappa,\tau)>0,\Lambda=\Lambda(\kappa,\tau)>0$ such that given $D>D_0$ any $(D,\tau)$-admissible path is $(\Lambda,0)$-quasi-geodesic.
\end{Proposition}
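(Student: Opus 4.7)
The plan is to prove the length inequality $|\gamma|\le\Lambda\cdot d(\gamma_-,\gamma_+)$ for any $(D,\tau)$-admissible path $\gamma$. Since any rectifiable subpath of a $(D,\tau)$-admissible path is itself $(D,\tau)$-admissible (interior pieces $p_i$ remain long, and the extreme pieces $p_0,p_n$ may be freely shortened), proving this bound for $\gamma$ itself already gives the $(\Lambda,0)$-quasi-geodesic condition. Fix a geodesic $\alpha$ from $\gamma_-$ to $\gamma_+$ and write $\gamma=p_0q_1p_1\cdots q_np_n$ with the endpoints of $p_i$ lying in some $X_i\in\mathbb{X}$.

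The main step is a fellow-travelling claim: for $D$ sufficiently large relative to $\kappa$ and $\tau$, the geodesic $\alpha$ meets each $N_\kappa(X_i)$, and the entry/exit points occur along $\alpha$ in the same order as the pieces $p_i$ occur along $\gamma$. The driving observation is that (BP) forces $\Pi_{X_i}(\gamma)$ to contain essentially all of $p_i$: because $\Pi_{X_i}(q_i)$ and $\Pi_{X_i}(q_{i+1})$ each have diameter $\le\tau$, we get $\|\Pi_{X_i}(\gamma)\|\ge|p_i|-2\tau>D-2\tau$. Invoking Lemma \ref{lemma:prop}(3) twice — once to pass from $\gamma$ to its endpoints, and then from those endpoints to $\alpha$ — yields $\|\Pi_{X_i}(\alpha)\|\ge D-2\tau-2C$. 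The $\kappa$-contracting property says any geodesic disjoint from $N_\kappa(X_i)$ projects to a set of diameter $\le\kappa$, so choosing $D_0>\kappa+2\tau+2C$ forces $\alpha\cap N_\kappa(X_i)\neq\emptyset$. The correct ordering of these visits along $\alpha$ follows from (LL2): either $X_i\neq X_{i+1}$ with $\mathcal{R}$-bounded intersection, in which case $N_\kappa(X_i)\cap N_\kappa(X_{i+1})$ has uniformly bounded diameter and the two visits cannot overlap, or $|q_{i+1}|>D$ and the same projection comparison applied to $q_{i+1}$ furnishes a long separating stretch of $\alpha$ outside $N_\kappa(X_i)$.

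Once the fellow-travelling is secured, decompose $\alpha=\alpha_0\beta_1\alpha_1\cdots\beta_n\alpha_n$ with $\beta_i$ the maximal subpath of $\alpha$ inside $N_\kappa(X_i)$. Standard projection estimates give
\[
\bigl||p_i|-|\beta_i|\bigr|\le 2\kappa,\qquad \bigl||q_j|-|\alpha_{j-1}|\bigr|\le 4\kappa+2\tau+2C,
\]
so $|\gamma|\le|\alpha|+O_{\kappa,\tau}(n)$. The same lower bounds that produced the separation also show every $\alpha_j$ has length at least $D-O(\kappa,\tau)$, whence $n\le|\alpha|/(D-O(\kappa,\tau))+1$. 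Combining, $|\gamma|\le\Lambda\cdot|\alpha|=\Lambda\cdot d(\gamma_-,\gamma_+)$ for some $\Lambda=\Lambda(\kappa,\tau)$.

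The hard part is the fellow-travelling step. Without (BP) the projection of $\gamma$ onto $X_i$ could be swamped by the connectors $q_j$ and the comparison with $\alpha$ would collapse; without the (LL2) dichotomy the geodesic $\alpha$ could shuttle between coincident or nearly coincident axes and spoil any linear length bound. Once the correct ordering of visits is in place, all remaining estimates are routine linear bookkeeping in constants depending only on $\kappa$ and $\tau$, and the value of $D_0$ is pinned down precisely by balancing $\kappa+2\tau+2C$ against the diameter loss from $\mathcal R$-bounded intersection.
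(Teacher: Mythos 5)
The paper does not supply its own proof of this proposition; it imports it verbatim as \cite[Corollary 3.2]{Y1}, so there is no internal argument to compare against. Assessing your proposal on its own, the overall strategy — force $\alpha=[\gamma_-,\gamma_+]$ into each $N_\kappa(X_i)$ via a projection estimate, then do linear bookkeeping — is the right one, but the central fellow-travelling step as written contains a genuine gap.

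You ``invoke Lemma \ref{lemma:prop}(3) twice --- once to pass from $\gamma$ to its endpoints, and then from those endpoints to $\alpha$.'' The first invocation is not licensed: that lemma is stated only for \emph{geodesic} segments, and the admissible path $\gamma$ is precisely the object you have not yet shown to be anything like a geodesic. Without it, the chain $\|\Pi_{X_i}(\gamma)\|\ge D \Rightarrow \|\Pi_{X_i}(\{\gamma_-,\gamma_+\})\|\ge D-C \Rightarrow \|\Pi_{X_i}(\alpha)\|\ge D-2C$ breaks at the first arrow. In fact the first arrow is roughly equivalent to what you are trying to prove. The deeper issue is that (BP) only bounds $\|\Pi_{X_i}(q_i)\|$ and $\|\Pi_{X_i}(q_{i+1})\|$, i.e.\ the two connectors \emph{adjacent} to $X_i$; it says nothing directly about where the distant endpoints $\gamma_-$ and $\gamma_+$ project. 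To conclude that $\Pi_{X_i}(\gamma_-)$ lands near $(p_i)_-$ and $\Pi_{X_i}(\gamma_+)$ near $(p_i)_+$ --- which is what you actually need before applying Lemma \ref{lemma:prop}(3) to $\alpha$ --- one must control the projections of all the intermediate pieces $p_j,q_j$ for $j\neq i$ onto $X_i$. That is exactly where (LL2), the contraction property, and a genuine induction on the number of pieces earn their keep; you relegate all of that to one sentence about ``correct ordering of these visits.'' The inductive fellow-travelling argument is the substantive content of the cited result, not a side remark, and the proof is incomplete without it. The other components (reduction to the single length inequality by stability of admissibility under subpaths, the bound $\|\Pi_{X_i}(\gamma)\|\ge|p_i|>D$, and the final bookkeeping given fellow-travelling) are sound.
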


We refer the reader to  \cite{Y1,Y} for   further discussions about admissible path.

\subsection{SCC actions and barrier-free elements} \label{sec:barrierfree}

We recall the notion of a barrier-free element from \cite{Y}.

\begin{Definition}
Fix constants $\nu,M>0$.
\begin{enumerate}
\item
Given $\nu>0$ and $g\in G$, we say that a geodesic $\gamma$ contains an $(\nu,g)$-\textit{barrier} if there exists a element $z\in G$ so that
    \begin{equation} \label{eq:barrier}
    \max\{d(z\cdot o,\gamma),d(z\cdot go,\gamma)\}\leq\nu.
    \end{equation}
    If no such $z\in G$ exists so that (\ref{eq:barrier}) holds, then $\gamma$ is called $(\nu,g)$-\textit{barrier-free}.

\item
An element $f\in G$ is $(\nu,M,g)$-\textit{barrier-free} if there exists an $(\nu,g)$-barrier-free geodesic between $B(o,M)$ and $B(fo,M)$.
\end{enumerate}
\end{Definition}

We have chosen two parameters $M_1,M_2$ so that the definition of a statistically convex-cocompact action \ref{SCCDefn} is flexible and easy to verify. It is enough to take $M_1=M_2=M$ in our use. Henceforth, we set $\mathcal{O}_M:=\mathcal{O}_{M,M}$ for easy of notation.
 When the SCC action contains a  contracting element, the definition is independent of the basepoint (see \cite{Y}).

Given $\nu,M>0$ and  any $g\in G$, let $\mathcal{V}_{\nu,M,g}$ be the collection of all $(\nu,M,g)$-barrier-free elements of $G$. The following results will be key in next sections.

\begin{Proposition}\label{pro:purelyexp}\cite{Y}
If $G$ admit a SCC action on a proper geodesic space $(Y,d)$ with a contracting element, then
\begin{enumerate}
\item $G$ has purely exponentially growth.
\item Let $M_0$ be the constant in the definition of SCC action, then for any $M>M_0$, there exists $\nu=\nu(M)>0$ such that $\mathcal{V}_{\nu,M,g}$ is exponentially negligible for any $g\in G$.
\end{enumerate}
\end{Proposition}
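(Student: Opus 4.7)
The proposition has two parts.

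For part (1), the lower bound $|B_n|\succ e^{\delta_G n}$ follows from the existence of a contracting element $h$: powers of $h$ used as spacers in admissible concatenations (Proposition \ref{pro:quasi-geo}) produce uniform quasi-geodesic paths of length $n+m+O(1)$ from pairs in $B_n\times B_m$, giving sub-multiplicativity $|B_{n+m+C}|\succ|B_n|\cdot|B_m|$ from which the lower bound follows by a Fekete-type argument. For the upper bound $|B_n|\prec e^{\delta_G n}$, I would decompose an arbitrary $f\in B_n$ using the entry/exit points of a geodesic $[o,fo]$ relative to $N_M(Go)$: ``outside'' subsegments correspond to elements of $\mathcal{O}_M$, while ``inside'' subsegments have norm bounded modulo the orbit. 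Writing this in terms of a Poincar\'e-type series, the SCC assumption $\delta_{\mathcal{O}_M}<\delta_G$ makes the resulting geometric series summable and gives purely exponential growth.

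For part (2), fix $M>M_0$ and $g\in G$, and let $h\in G$ be a contracting element with axis $A=\ax(h)$. By Lemma \ref{lemma:strongcontracting} the collection $\mathbb{X}=\{kN_C(A):k\in G\}$ is a contracting system with bounded projection. For a large integer $N$, define the extension map
\[ \phi:G\times\mathcal{V}_{\nu,M,g}\to G,\qquad \phi(f',f):=f'\,g\,h^N\,f. \]
The concatenation $[o,f'o]\cdot[f'o,f'go]\cdot[f'go,f'gh^No]\cdot[f'gh^No,\phi(f',f)o]$ is $(D,\tau)$-admissible relative to $\mathbb{X}$, with the middle contracting piece lying on $f'g\cdot A\in\mathbb{X}$; Proposition \ref{pro:quasi-geo} then asserts it is a uniform quasi-geodesic, from which $|\phi(f',f)|=|f'|+|g|+N+|f|+O(1)$ and injectivity of $\phi$ follow. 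By the contracting property of $f'g\cdot A$, any geodesic from $B(o,M)$ to $B(\phi(f',f)\cdot o,M)$ must fellow-travel this quasi-geodesic within a constant $C_0$, so in particular it passes $C_0$-close to both $f'o$ and $f'go$; hence $z=f'$ is a $(\nu,g)$-barrier whenever $\nu\geq C_0+M$. Setting $\nu:=C_0+M$, we conclude $\phi(f',f)\notin\mathcal{V}_{\nu,M,g}$.

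Counting then yields $|\mathcal{V}_{\nu,M,g}\cap B_n|\cdot|B_n|\leq|B_{2n+O(1)}\setminus\mathcal{V}_{\nu,M,g}|$, which combined with part (1) already gives $\delta_{\mathcal{V}_{\nu,M,g}}\leq\delta_G$. The main obstacle is upgrading to the strict inequality $\delta_{\mathcal{V}_{\nu,M,g}}<\delta_G$, since a single extension captures only limiting density bounded away from $1$ but not zero. I would address this by iterating the extension, so that a $k$-tuple $(f_1,\ldots,f_k)\in\mathcal{V}_{\nu,M,g}^k$ produces a distinct element $f_k\,gh^N\,f_{k-1}\cdots gh^N\,f_1\in B_{kn+O(k)}\setminus\mathcal{V}_{\nu,M,g}$, and then by exploiting, within each $f_i\in\mathcal{V}_{\nu,M,g}$ itself, the decomposition of its barrier-free geodesic via entries through $N_M(Go)$: each ``outside'' piece forces a factor in $\mathcal{O}_M$, and the barrier-free condition imposes an additional combinatorial restriction on the admissible configurations. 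Since $\delta_{\mathcal{O}_M}<\delta_G$, a Poincar\'e-series argument parallel to part (1)'s upper bound then converts this restriction into the strict exponential gap, yielding exponential negligibility of $\mathcal{V}_{\nu,M,g}$.
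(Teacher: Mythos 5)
Note first that Proposition~\ref{pro:purelyexp} is cited from \cite{Y}; the present paper provides no proof of it, so there is no internal argument to compare against. Evaluating your sketch on its own merits: the outline for part~(1) is broadly plausible, and it does match the general strategy one would use (a lower bound via a ping-pong/sub-multiplicativity argument with a contracting element, an upper bound from the entry/exit decomposition of $[o,fo]$ relative to $N_M(Go)$ together with the SCC gap $\delta_{\mathcal{O}_M}<\delta_G$).

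Part~(2), however, has a genuine gap that your sketch acknowledges but does not close. Two issues. First, the admissibility of the concatenation defining $\phi(f',f)$ requires a bounded projection of $q_1=[o,f'o]$ onto $X_1=f'g\ax(h)$, which is not automatic for an arbitrary first factor $f'\in G$; this is exactly why, in the paper's own use of Proposition~\ref{pro:quasi-geo} (proof of Theorem~\ref{thm:gen}), the letters are taken outside the negligible sets $Z,T,W$. Second and more seriously, the single extension --- and likewise the $k$-fold extension --- only yields $\delta_{\mathcal{V}}\le\delta_G$: from $|\mathcal{V}\cap B_n|^k\le|B_{kn+O(k)}|\asymp\exp(\delta_G(kn+O(k)))$ one gets, after taking $k$-th roots, the same non-strict inequality for every $k$. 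Your proposed remedy via the $\mathcal{O}_M$-decomposition inside each $f_i$ does not repair this: a barrier-free element may have a geodesic lying entirely inside $N_M(Go)$, so no $\mathcal{O}_M$-factors appear at all and the SCC gap is never triggered. What is actually needed (and what \cite{Y} supplies) is a Poincar\'e-series argument: the injective free-monoid extension $\bigsqcup_k\mathcal{V}^k\hookrightarrow G$ forces $\sum_k\bigl(e^{-sc}\mathcal{P}_{\mathcal{V}}(s)\bigr)^k\le\mathcal{P}_G(s)$, hence a uniform bound $\mathcal{P}_{\mathcal{V}}(s)<e^{sc}$ for $s>\delta_G$, and it is the interplay of this bound with the divergence of $\mathcal{P}_G$ at $\delta_G$ (itself established in \cite{Y} from the SCC condition) that produces the strict gap $\delta_{\mathcal{V}}<\delta_G$. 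That ingredient is absent from your sketch.
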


It is easy to see from the proof of  \cite[Corollary 4.5]{Y} that the following conclusion holds in a general proper action.

\begin{Proposition}\label{pro:neg}
Suppose that a group $G$ acts properly on a proper geodesic space $(Y,d)$ with a contracting element, then
for any $M>0$, there exists $\nu=\nu(M)>0$ so that
\[\sum_{n=1}^{+\infty}|\mathcal{V}_{\nu,M,g}\cap A(n,\Delta)|\exp(-n\delta_G)<+\infty\]
for any $g\in G$.
\end{Proposition}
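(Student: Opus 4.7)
The plan is to follow the extension-via-contracting-element argument in the proof of \cite[Corollary 4.5]{Y}, where the SCC hypothesis is used to convert an injection $\mathcal{V}_{\nu,M,g}\hookrightarrow G\setminus\mathcal{V}_{\nu,M,g}$ into exponential negligibility. The weaker conclusion claimed here is a direct output of the same injection processed against the weight $\exp(-|\cdot|\delta_G)$.

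Concretely, fix a contracting element $h\in G$ with axis $A=\ax(h)$; by Lemma \ref{lemma:strongcontracting}, the family $\{xA:x\in G\}$ is a contracting system with bounded projection. Given $M>0$, choose $\nu=\nu(M)>0$ larger than $M$, than the contracting constant of $A$, and than the admissibility thresholds of Proposition \ref{pro:quasi-geo}. For each $f\in\mathcal{V}_{\nu,M,g}$ and each $k\geq 1$, the concatenation $[o,fo]\cdot[fo,fgo]\cdot fg\cdot[o,h^ko]$ is an admissible path whose long local piece of length $k|h|$ lies on the axis $fgA$, and whose transitional segment has bounded projection by Lemma \ref{lemma:strongcontracting}; Proposition \ref{pro:quasi-geo} then makes it a uniform quasi-geodesic. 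The triangle inequality along this concatenation gives $|fgh^k|\leq|f|+|g|+k|h|+O(1)$, and the element $z=f$ supplies a $(\nu,g)$-barrier on the quasi-geodesic (since $d(fo,\gamma)=d(fgo,\gamma)=0$ there), which by quasi-geodesic stability transfers to the actual geodesic $[o,fgh^ko]$ for $\nu$ sufficiently large. Hence $fgh^k\notin\mathcal{V}_{\nu,M,g}$, and the map $(f,k)\mapsto fgh^k$ is injective by left-cancellation.

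Summing the weighted injection yields
\[\eta\cdot\sum_{f\in\mathcal{V}_{\nu,M,g}}\exp(-|f|\delta_G)\leq\sum_{f'\in G\setminus\mathcal{V}_{\nu,M,g}}\exp(-|f'|\delta_G)\]
with $\eta=\exp(-|g|\delta_G)\sum_{k\geq 1}\exp(-k|h|\delta_G)>0$ a fixed geometric-series constant. Enriching the construction with a ping-pong pair $(h,h'=aha^{-1})$ supplied by Lemma \ref{lemma:finiteindex} and iterating to depth $N$ upgrades this bound through the standard combinatorial growth-gap argument of \cite[§4]{Y}, forcing $\sum_{f\in\mathcal{V}_{\nu,M,g}}\exp(-|f|\delta_G)<+\infty$. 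Since $\sum_n|\mathcal{V}_{\nu,M,g}\cap A(n,\Delta)|\exp(-n\delta_G)$ is a fixed constant multiple of this Poincaré sum by an Abel-summation identity, the claimed finiteness follows.

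The main obstacle is the iteration step: the bare inequality $\eta S\leq T$ between the partial Poincaré sums $S=\sum_{\mathcal{V}_{\nu,M,g}}\exp(-|\cdot|\delta_G)$ and $T=\sum_{G\setminus\mathcal{V}_{\nu,M,g}}\exp(-|\cdot|\delta_G)$ does not immediately force $S<+\infty$ in divergence-type actions, where $S+T=+\infty$. The resolution exploits the ping-pong sub-semigroup to make the iterated multiplicity of the injection grow exponentially in the depth $N$, so that any positive $S$ would contribute a divergent Poincaré mass in the image — contradicting the image's containment in $G\setminus\mathcal{V}_{\nu,M,g}$. All technical ingredients — admissibility, injectivity, length control, bounded projection — reduce to Proposition \ref{pro:quasi-geo} and the lemmas already established in Section \ref{Sect2}, so no appeal to SCC is needed at any step.
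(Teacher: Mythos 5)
The paper does not spell out a proof here; it simply defers to \cite[Corollary 4.5]{Y}, so your plan of reconstructing Yang's extension argument and re-weighting it by $\exp(-\delta_G|\cdot|)$ is aimed in the right direction, and you correctly isolate the point at which the SCC hypothesis can be dropped. However, your write-up has a genuine conceptual gap precisely at the step you flag as ``the main obstacle.'' The assertion that ``any positive $S$ would contribute a divergent Poincar\'e mass in the image --- contradicting the image's containment in $G\setminus\mathcal{V}_{\nu,M,g}$'' is not a contradiction: in a divergence-type action $G\setminus\mathcal{V}_{\nu,M,g}$ typically has infinite Poincar\'e mass at $s=\delta_G$, so containment of the image there constrains nothing. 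The correct mechanism is to run the iterated injection against $\mathcal{P}_G(s)$ for $s>\delta_G$, where convergence is automatic from the definition of $\delta_G$: injectivity plus the additivity $|\Phi(f_1,\dots,f_N)|\le\sum|f_i|+NC$ gives $\bigl(e^{-sC}\mathcal{P}_{\mathcal{V}}(s)\bigr)^N\le\mathcal{P}_G(s)<\infty$ for every $N$, hence $\mathcal{P}_{\mathcal{V}}(s)\le e^{sC}$ uniformly in $s>\delta_G$, and monotone convergence as $s\downarrow\delta_G$ yields $\mathcal{P}_{\mathcal{V}}(\delta_G)<\infty$. This is what makes the conclusion free of the SCC hypothesis; your version as written does not close.

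There are also technical problems in the injection itself. First, the concatenation $[o,fo]\cdot[fo,fgo]\cdot fg[o,h^ko]$ is not an admissible path in the sense of the paper's definition: the transitional pieces $q_i$ must be single geodesics, and more importantly the (BP) condition --- that $[o,fgo]$ has $\tau$-bounded projection onto $fg\,\ax(h)$ --- is simply not automatic for a fixed connecting word $gh^k$. This is exactly what Yang's Extension Lemma is for: one chooses the connecting element from a fixed finite set of contracting elements with pairwise bounded intersection, the choice depending on $f_i,f_{i+1}$, so that bounded projection can be arranged on both sides. Your construction, with a single hard-wired connector, can fail (BP). Second, ``the map $(f,k)\mapsto fgh^k$ is injective by left-cancellation'' is false; from $f_1gh^{k_1}=f_2gh^{k_2}$ with $k_1\ne k_2$ one gets $f_1=f_2gh^{k_2-k_1}g^{-1}$, which cancellation does not rule out. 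Injectivity in Yang's argument is obtained by decoding the admissible decomposition from the long $\ax(h)$-pieces acting as markers along the (Morse) quasi-geodesic, using that the $f_i\in\mathcal{V}$ carry no such markers; this requires the admissible path machinery you have not correctly set up.

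So: right strategy (follow \cite[Cor.~4.5]{Y}, observe the argument already yields finiteness of the $\delta_G$-weighted sum), but the injection as described does not satisfy (BP), the injectivity claim is unjustified, and the closing ``divergent mass'' contradiction does not work --- it must be replaced by the $s>\delta_G$ limiting argument.
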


\subsection{The DOP condition}
This subsection collects several useful consequences of the Dal'bo-Otal-Peign\'{e} condition.
For any $0 \le n_1\le n_2$, we consider the following annulus-like set $$A([n_1,n_2],\Delta):=\{g\in G:n_1-\Delta\leq d(o,go)\leq n_2+\Delta\}.$$ Usually, we consider the \textit{$(\rho, \Delta)$-annulus} $A([\rho n,n], \Delta)$ for $\rho\in [0, 1]$.  For simplicity, we write $A([\rho n,n])$ if $\Delta=0$, and assume that $\rho n$ are integers.

Observe that
    \begin{equation} \label{eq:DOP}
    \sum_{g\in \mathcal{O}_{M_1,M_2}}|g|\exp(-\delta_G|g|)\asymp_\Delta \sum_{n=1}^{+\infty}n|\mathcal{O}_{M_1,M_2}\cap A(n,\Delta)|\exp(-n\delta_G),
    \end{equation}
for any $\Delta>0$. Indeed, this follows from the fact that any $g \in \mathcal{O}_{M_1,M_2}$ is contained in a uniform number of annular sets $A(n,\Delta)$ where $n\ge 1$. Consequently,
    \begin{equation} \label{eq:converge}
    \sum_{g\in \mathcal{O}_{M_1,M_2}}\exp(-\delta_G|g|)<\infty.
    \end{equation}

Thus, if $G$ admit a SCC action on $Y$, then the action satisfies the DOP condition.  We remark that the formula (\ref{eq:converge})  turns out to be  true for any proper action of $G$ on $(Y,d)$ with a contracting element: the methods in \cite{Y} can be invoked to prove (\ref{eq:converge}). This generality is not used here and so the details are left to interested reader.

For any $\Delta>0$, let
\[\mathcal{O}_M(n,\Delta):=\mathcal{O}_M\cap A(n,\Delta)\cup\{1\},~
\mathcal{V}_{\nu, h}(n,\Delta):=\mathcal{V}_{\nu, h}\cap A(n,\Delta).\]

The following elementary lemma will be needed in the next section.

\begin{Lemma}\label{lemma:0}
Assume that the proper group action satisfies the DOP condition. For any $1>\varepsilon>0$ and any   $\Delta>0$, we have
\begin{enumerate}
\item \[\lim_{n\rightarrow\infty}\sum_{\varepsilon n\leq l\leq n}n|\mathcal{O}_M(l,\Delta)|\exp(-l\delta_G)=0.\]
\item \[\lim_{n\rightarrow\infty}\sum_{\epsilon n \le l \le n}\sum^{l_1+l_2+l_3=l}_{l_1, l_2, l_3\geq 0} (l_1+1) |\mathcal{O}_M(l_1,\Delta)|\cdot|\mathcal{V}_{\nu, h}(l_2,\Delta)|\cdot |\mathcal{O}_M(l_3,\Delta)| \cdot \exp(-n\delta_G)=0.\]
When the action is SCC, the convergence is exponentially fast.
\end{enumerate}
\end{Lemma}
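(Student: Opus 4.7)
\medskip
\noindent\textbf{Proof plan.} The whole argument rests on two input bounds. By the DOP hypothesis together with the equivalence \eqref{eq:DOP}, the series
\[
\sum_{l\ge 1} l\,|\mathcal{O}_M(l,\Delta)|\exp(-l\delta_G)
\]
converges, and in particular so does $\sum_l |\mathcal{O}_M(l,\Delta)|\exp(-l\delta_G)$ by \eqref{eq:converge}. By Proposition \ref{pro:neg}, the Poincar\'e-type series
\[
\sum_{l\ge 1}|\mathcal{V}_{\nu,h}(l,\Delta)|\exp(-l\delta_G)
\]
also converges. These three convergent series will drive both parts.

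For (1), the strategy is simply to absorb the factor $n$ into $l$. Since $l\geq \varepsilon n$ throughout the sum, $n\leq l/\varepsilon$, hence
\[
\sum_{\varepsilon n\le l\le n} n\,|\mathcal{O}_M(l,\Delta)|\exp(-l\delta_G)\;\le\;\frac{1}{\varepsilon}\sum_{l\ge \varepsilon n} l\,|\mathcal{O}_M(l,\Delta)|\exp(-l\delta_G),
\]
and the right hand side is the tail of a convergent series, so it tends to $0$. For the SCC case, the stronger input $\delta_{\mathcal{O}_M}<\delta_G$ gives $|\mathcal{O}_M(l,\Delta)|\leq C\exp(\delta' l)$ with $\delta'<\delta_G$; plugging this in yields an upper bound of order $n\exp\bigl(-\varepsilon n(\delta_G-\delta')\bigr)$, which decays exponentially.

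For (2), the key trick is that $l_1+l_2+l_3=l\le n$, so
\[
\exp(-n\delta_G)\;\le\;\exp\bigl(-(l_1+l_2+l_3)\delta_G\bigr)\;=\;\prod_{i=1}^{3}\exp(-l_i\delta_G).
\]
Substituting this separates the sum into a product of three single-index series. I would then split the sum $\{l_1+l_2+l_3\ge \varepsilon n\}$ into three pieces according to whichever $l_i\ge \varepsilon n/3$. In the piece where $l_1\ge \varepsilon n/3$, the sum is bounded by
\[
\Bigl(\sum_{l_1\ge \varepsilon n/3}(l_1+1)|\mathcal{O}_M(l_1,\Delta)|e^{-l_1\delta_G}\Bigr)\Bigl(\sum_{l_2}|\mathcal{V}_{\nu,h}(l_2,\Delta)|e^{-l_2\delta_G}\Bigr)\Bigl(\sum_{l_3}|\mathcal{O}_M(l_3,\Delta)|e^{-l_3\delta_G}\Bigr),
\]
in which the first factor is the tail of a convergent series (DOP) and the last two factors are bounded. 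Thus this piece tends to $0$. The pieces where $l_2\ge \varepsilon n/3$ or $l_3\ge \varepsilon n/3$ are treated symmetrically, using the convergence of $\sum|\mathcal{V}_{\nu,h}(l,\Delta)|e^{-l\delta_G}$ and of $\sum l\,|\mathcal{O}_M(l,\Delta)|e^{-l\delta_G}$ respectively (note the $l_1+1$ factor is handled trivially in these cases by $l_1\le l\le n$, which still combines with the exponentially small tail).

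For the SCC refinement of (2), the exponential negligibility of both $\mathcal{O}_M$ (from the definition of SCC) and $\mathcal{V}_{\nu,h}$ (from Proposition \ref{pro:purelyexp}(2)) yields uniform bounds $|\mathcal{O}_M(l,\Delta)|,|\mathcal{V}_{\nu,h}(l,\Delta)|\le Ce^{\delta_1 l}$ for some $\delta_1<\delta_G$. Writing $\eta:=\delta_G-\delta_1>0$ and using $\#\{(l_1,l_2,l_3):l_1+l_2+l_3=l\}=O(l^2)$, the entire sum is dominated by $\sum_{l\ge \varepsilon n}(l+1)^3 e^{-\eta l}$, whose tail decays exponentially in $n$. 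The only mildly delicate step is the bookkeeping in the three-way split; no real obstacle beyond keeping the three convergent series straight.
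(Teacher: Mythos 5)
Your proof is correct and takes essentially the same route as the paper: in (1) you absorb $n$ via $n\le l/\varepsilon$ into the tail of the convergent DOP series $\sum l\,|\mathcal{O}_M(l,\Delta)|e^{-l\delta_G}$, and in (2) you replace $\exp(-n\delta_G)$ by $\exp(-(l_1+l_2+l_3)\delta_G)$ and control the tail of the resulting Cauchy product of three convergent series. The only difference is cosmetic: the paper invokes convergence of the Cauchy product directly, whereas you re-derive it by splitting according to which index $l_i$ exceeds $\varepsilon n/3$; both are standard and fine.
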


\begin{proof}
By  definition of the DOP condition, we obtain
\[\sum_{n=0}^{+\infty}|\mathcal{O}_M(n,\Delta)|\exp(-n\delta_G)<\infty.\]
from the formulae (\ref{eq:DOP}) and (\ref{eq:converge}). By the Cauchy criterion of series, we know
\[\lim_{n\rightarrow\infty}\sum_{\varepsilon n\leq l\leq n}l|\mathcal{O}_M(l,\Delta)|\exp(-l\delta_G)=0\]
where the convergence is exponential fast when the action is SCC.
The first statement (1) thus follows from the following
\[\sum_{\varepsilon n\leq l\leq n}\varepsilon n|\mathcal{O}_M(l,\Delta)|\exp(-l\delta_G)
\leq \sum_{\varepsilon n\leq l\leq n}l|\mathcal{O}_M(l,\Delta)|\exp(-l\delta_G).\]

By Proposition \ref{pro:neg}, we have $\displaystyle\sum_{n=1}^{+\infty}|\mathcal{V}_{h^m}(n,\Delta)|\exp(-n\delta_G)<\infty$, where the partial sum converges exponentially fast when the action is SCC. The second statement then follows from the convergence of the Cauchy product of three convergent series.   The proof is finished.
\end{proof}

At last, we introduce a slightly general notion of negligibility using $(\rho, \Delta)$-annulus. Fix a   number $\rho \in (0, 1]$ and $\Delta>0$. We say that a set $K\subset G$ is \textit{negligible} in the $(\rho, \Delta)$-annulus  if the following holds
\begin{equation}\label{bigAnnuliConv}
\displaystyle \frac{|K\cap A([\rho n,n], \Delta)|}{|A([\rho n,n], \Delta)|} \to 0.
\end{equation}
If the convergence is exponentially fast, it is \textit{exponentially negligible}.

The following lemma clarifies its role in proving the genericity in the next sections.   It follows immediately from the purely exponential growth.
\begin{Lemma}\label{LargeAnnulus}
Assume that the proper group action has purely exponential growth. For any $0<\rho<1$, we have $|A([\rho n,n])|\asymp_\rho \exp(\delta_Gn)$ and
\[\frac{|A([\rho n,n])\times A([\rho n,n])|}{|B_{n}\times B_{n}|}\stackrel{exp}{\rightarrow}1,\frac{|A([\rho n,n])|}{|B_{n}|}\stackrel{exp}{\rightarrow}1.\]
\end{Lemma}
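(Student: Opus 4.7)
The plan is straightforward and relies entirely on the purely exponential growth (PEG) hypothesis, which provides constants $c_1, c_2 > 0$ such that
\[
c_1 \exp(\delta_G n) \le |B_n| \le c_2 \exp(\delta_G n)
\]
for all $n \ge 0$. The strategy is to decompose the annulus as $A([\rho n, n]) = B_n \setminus B_{\lceil \rho n \rceil - 1}$ and squeeze using these two-sided bounds; the exponential convergence then comes for free because $|B_{\rho n}|$ is exponentially smaller than $|B_n|$.

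First I would verify the two-sided estimate $|A([\rho n,n])| \asymp_\rho \exp(\delta_G n)$. The upper bound is trivial: $|A([\rho n, n])| \le |B_n| \le c_2 \exp(\delta_G n)$. For the lower bound, write
\[
|A([\rho n, n])| \ge |B_n| - |B_{\lceil \rho n\rceil}| \ge c_1 \exp(\delta_G n) - c_2 \exp(\delta_G \rho n) = \exp(\delta_G n)\bigl(c_1 - c_2\exp(-\delta_G(1-\rho) n)\bigr).
\]
Since $1 - \rho > 0$, the bracketed quantity is at least $c_1/2$ for all $n$ larger than some $n_0 = n_0(\rho)$, which yields $|A([\rho n, n])| \ge (c_1/2) \exp(\delta_G n)$ for such $n$, and the finitely many small values of $n$ can be absorbed into the constant $\asymp_\rho$.

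Next I would handle the single ratio. Writing $|B_n| = |A([\rho n, n])| + |B_{\lceil \rho n\rceil - 1}|$ gives
\[
1 - \frac{|A([\rho n,n])|}{|B_n|} = \frac{|B_{\lceil \rho n\rceil - 1}|}{|B_n|} \le \frac{c_2 \exp(\delta_G \rho n)}{c_1 \exp(\delta_G n)} = \frac{c_2}{c_1}\bigl(\exp(-\delta_G(1-\rho))\bigr)^{n},
\]
and since $\theta := \exp(-\delta_G(1-\rho)) \in (0,1)$, this is the required exponential convergence $|A([\rho n, n])|/|B_n| \stackrel{exp}{\to} 1$.

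Finally, for the product ratio, note that
\[
\frac{|A([\rho n,n]) \times A([\rho n,n])|}{|B_n \times B_n|} = \left(\frac{|A([\rho n,n])|}{|B_n|}\right)^{\!2},
\]
so if $a_n \stackrel{exp}{\to} 1$ then writing $1 - a_n^2 = (1-a_n)(1+a_n) \le 2(1-a_n)$ shows $a_n^2 \stackrel{exp}{\to} 1$ as well. There is no real obstacle here; the only care needed is the harmless book-keeping with the ceiling $\lceil \rho n \rceil$ to make sense of the set-theoretic identity $A([\rho n, n]) = B_n \setminus B_{\lceil \rho n \rceil - 1}$, which is why the hypothesis that $\rho n$ is an integer was stipulated earlier in the paragraph defining $A([\rho n, n])$.
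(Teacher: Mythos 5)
Your proof is correct in approach and is the natural argument behind the paper's remark that this lemma ``follows immediately from the purely exponential growth'' (the paper itself supplies no proof). One small slip, however: in a general geodesic metric space the quantities $d(o,go)$ need not be integers, so the set-theoretic identity $A([\rho n,n]) = B_n \setminus B_{\lceil \rho n\rceil - 1}$ that you invoke (and use to get the displayed \emph{equality} $1 - |A([\rho n,n])|/|B_n| = |B_{\lceil\rho n\rceil - 1}|/|B_n|$) is false in general: an element with $\rho n - 1 < d(o,go) < \rho n$ lies in neither $A([\rho n,n])$ nor $B_{\lceil\rho n\rceil - 1}$. The hypothesis that $\rho n$ is an integer fixes the annulus boundary, not the values of the metric. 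The correct statement is the containment $B_n \setminus A([\rho n,n]) = \{g : d(o,go) < \rho n\} \subseteq B_{\rho n}$, which turns your equality into the inequality
\[
1 - \frac{|A([\rho n,n])|}{|B_n|} \;\le\; \frac{|B_{\rho n}|}{|B_n|} \;\le\; \frac{c_2}{c_1}\exp\bigl(-\delta_G(1-\rho)n\bigr),
\]
and likewise $|A([\rho n,n])| \ge |B_n| - |B_{\rho n}|$ for the lower bound in the first estimate. This does not change the exponential rate, and the rest of your argument (including the reduction of the product ratio to the square of the single ratio) is fine.
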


Hence, in order to prove that a set $K$ is (exponentially) negligible in $G$, we can assume that $K\subset A([\rho n,n])$ to simplify the discussion for a certain choice of $\rho\in(0,1)$. That is to say, we only need to prove that $K$ is (exponentially) negligible  in $(\rho, \Delta)$-annulus. And, it turns out that the proof of (\ref{bigAnnuliConv}) for $\rho=1$ is much more simple than that for $\rho\in(0,1)$. Therefore, we shall consider the big annulus instead of the usual one in next sections.

The same consideration applies in the case of $G^{(2)}$ where $K$ is assumed to be in $A([\rho n,n])\times A([\rho n,n])$.

\begin{comment}
The reader may be wondering about the necessity of introducing the complicated   set $A([\rho n,n])$ instead of the usual one $A(n,\Delta)$.  This concern is particularly underlined by the following fact from \cite[Lemma 3.1]{GMO}: if for a given subset $K\subseteq G$, we have
\begin{equation}\label{AnnuliConv}
\frac{|K\cap A(n,\Delta)|}{|A(n,\Delta)|}{\rightarrow}0
\end{equation}
exponentially fast for some $\Delta>0$, then $K$ is exponentially negligible in $G$ using ball model. We do have this exponential convergence when the group action is SCC.  However, for a general proper action as assumed in Theorems \ref{thm:gen} \ref{thm:stahyper},   the convergence  of (\ref{AnnuliConv})  is guaranteed by  Proposition \ref{pro:neg} but without the exponential rate. As a consequence, we are unable to obtain the genericity in ball model only using (\ref{AnnuliConv}). In addition, the proof of  (\ref{AnnuliConv}) is much more simple than that of (\ref{bigAnnuliConv}).

Therefore, for this reason, we shall consider the big annulus instead of the usual one in next sections, and introduce the following technical notions to unify them.
\end{comment}

\section{Negligible subsets} \label{sec:negligiblesubset}

Throughout this section, let $G$ admit a proper action on a proper geodesic metric space $(Y,d)$ with a contracting element. If the group action   satisfies the DOP condition, then we take $\nu,M>0$ to satisfy the definition of DOP condition and Proposition \ref{pro:neg}.  When the action is SCC, the constants $\nu,M>0$ are given by Proposition \ref{pro:purelyexp}.   We denote $\mathcal{O}_M=\mathcal{O}_{M,M},\mathcal{V}_{\nu, h}=\mathcal{V}_{\nu,M,h}$ for simplicity.

The goal of this section is to provide some negligible sets under the above assumptions. Moreover, these are exponentially negligible when the group action is SCC. We suggest that the reader only reads the definition of these sets first and then read the proof of the theorems in next section, finally return to the proof that these sets are negligible.

In all results obtained in what follows, we assume  in the DOP case and have in the SCC case by Proposition \ref{pro:purelyexp} that $G$ has purely exponentially growth:
$$|B_n|\asymp \exp(\delta_Gn) \asymp_\Delta |A(n,\Delta)|$$
for any $\Delta\gg 0$. We fix such a constant $\Delta$. This estimate will be used implicitly several times.

\subsection{Elements with definite barrier-free proportion}
This subsection defines three negligible subsets of elements with definite proportion with(out) certain properties.

For any $\varepsilon\in (0, 1)$, let ~$U(\varepsilon)$~ be the set of elements $u\in G$ such that some geodesic $\alpha=[o,uo]$ contains a subsegment $\alpha^{\varepsilon}$ of length $\varepsilon|u|$   outside $N_M(Go)$. That is to say,
\begin{equation}\label{Uepsilon}
U(\varepsilon)=\{u\in G: \exists\alpha=[o,uo], \alpha^{\varepsilon} \subset \alpha~ s.t.~|\alpha^\varepsilon|\ge \epsilon |\alpha|,~\alpha^\varepsilon \cap N_M(Go)=\emptyset.\}
\end{equation}

\begin{Lemma}\label{lemma:out}
If the   action has PEG and satisfies the DOP condition, then for any $\varepsilon\in (0, 1)$ and $1\ge \rho>\varepsilon$, we have $U(\varepsilon)$ is negligible in $(\rho, \Delta)$-annuli.

Moreover, if the action is SCC, then $U(\varepsilon)$ is exponentially negligible.
\end{Lemma}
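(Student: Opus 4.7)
Fix $u \in U(\varepsilon) \cap A([\rho n, n], \Delta)$ with witness geodesic $\alpha = [o, uo]$ and subsegment $\alpha^\varepsilon \subseteq \alpha$ of length $\geq \varepsilon |u|$ lying entirely outside $N_M(Go)$. Since $o, uo \in Go \subseteq N_M(Go)$, the connected subsegment $\alpha^\varepsilon$ is contained in some maximal open component $(p_1, p_2)_\alpha$ of $\alpha \setminus N_M(Go)$, whose endpoints $p_1, p_2$ meet $N_M(Go)$; pick $g_1, g_3 \in G$ with $d(g_1 o, p_1) \leq M$ and $d(g_3 o, p_2) \leq M$.

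Set $v := g_1^{-1} g_3$ and $w := g_3^{-1} u$, so that $u = g_1 v w$. Translating by $g_1^{-1}$ sends $[p_1, p_2]_\alpha$ to a geodesic from $B(o, M)$ to $B(vo, M)$ whose interior lies outside $g_1^{-1} N_M(Go) = N_M(Go)$, which witnesses $v \in \mathcal{O}_M$. The triangle inequality yields the two size estimates
\[ |g_1| + |v| + |w| \;\leq\; |u| + 4M, \qquad |v| \;\geq\; |[p_1, p_2]_\alpha| - 2M \;\geq\; \varepsilon|u| - 2M \;\geq\; \varepsilon \rho n - O(1). \]

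The assignment $u \mapsto (g_1, v, w)$ is therefore an injection of $U(\varepsilon) \cap A([\rho n, n], \Delta)$ into the set of triples in $G \times \mathcal{O}_M \times G$ whose total length is $\leq n + O(1)$ and whose middle factor has length $\geq \varepsilon \rho n - O(1)$. Using PEG $|B_a| \asymp \exp(\delta_G a)$ together with the elementary bound $\sum_{a + c \leq N} \exp(\delta_G(a + c)) \prec N \exp(\delta_G N)$, and dividing by $|A([\rho n, n], \Delta)| \asymp_{\rho, \Delta} \exp(\delta_G n)$ from Lemma \ref{LargeAnnulus}, one obtains
\[ \frac{|U(\varepsilon) \cap A([\rho n, n], \Delta)|}{|A([\rho n, n], \Delta)|} \;\prec\; n \sum_{b \geq \varepsilon \rho n - O(1)} |\mathcal{O}_M(b, \Delta)| \exp(-\delta_G b). \]
The right-hand side tends to zero by Lemma \ref{lemma:0}(1), applied with the constant $\varepsilon \rho / 2$ in place of $\varepsilon$, and the convergence is exponential under the SCC assumption by the same lemma.

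The only delicate step is the geometric decomposition: enlarging the given subsegment $\alpha^\varepsilon$ to a maximal outside component whose endpoints are pinned by orbit points, and checking that the $g_1^{-1}$-translate is a legitimate $\mathcal{O}_{M,M}$-witness (which uses the $G$-invariance of $N_M(Go)$). Once this decomposition is in place, the rest is a routine convolution estimate driven by PEG and the DOP tail bound packaged in Lemma \ref{lemma:0}.
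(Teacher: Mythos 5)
Your argument is correct and follows essentially the same strategy as the paper: decompose $u = g_1 v w$ with $v \in \mathcal{O}_M$ of length $\gtrsim \varepsilon\rho n$ by locating a maximal excursion of $\alpha$ outside $N_M(Go)$ containing the witness subsegment, then run the convolution estimate driven by PEG and the DOP tail bound of Lemma~\ref{lemma:0}(1). The paper phrases the bookkeeping via explicit subsets $U_i^l$ indexed by the position and length of the excursion rather than via an injection into triples, but the counting is identical.
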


\begin{proof}
Assume first that the group action satisfies the PEG and DOP condition.

Fix any $1>\rho>\varepsilon$. By Lemma \ref{LargeAnnulus}, we only need to show that  $\displaystyle\frac{|U(\varepsilon)\cap A([\rho n,n])|}{|A([\rho n,n])|}\rightarrow 0$ as $n\rightarrow\infty$. Consider any $g\in U(\varepsilon)\cap A([\rho n,n])$ and denote $|g|=k$, then $\rho n\leq k\leq n$. By definition of $U(\varepsilon)$, there exists  a geodesic $\alpha=[o,go]$ such that
\begin{equation} \label{eq:out}
\alpha ~\text{contains a subsegment of length}~ \varepsilon k ~\text{which lies outside}~ N_M(Go).
\end{equation}
Among those, we consider the first maximal open
segment $(x,y)_\alpha$ of $\alpha$ which lies outside $N_M(Go)$ and whose length is bigger than $\varepsilon k\in[\varepsilon\rho n, n]$.

According to the length  and the position of $(x, y)_\alpha$,  we subdivide $U(\varepsilon)\cap A([\rho n,n])$ into a sequence of subsets as follows.

For $0\leq i\leq (1-\varepsilon)n,\varepsilon\rho n\leq l\leq n$, define $U_i^l$ to be the set of element $g\in U(\varepsilon)\cap A([\rho n,n])$ such that the segment $(x,y)_\alpha\subseteq\alpha$ defined as above satisfies $d(o, x)=i$ and $d(x, y)=l$.
Then we have the following decomposition,
\[U(\varepsilon)\cap A([\rho n,n])=\bigcup_{\substack{0\leq i\leq (1-\varepsilon)n \\ \varepsilon\rho n\leq l\leq n}}U_i^l.\]

For any $g\in U_i^l$, there exists a geodesic $\alpha=[o,go]$ such that $\alpha_{(i,i+l)}$ lies outside $N_M(Go)$ and $\max\{d(\alpha(i),uo),d(\alpha(i+l),vo)\}\leq M$ for some $u,v\in G$. Now we can write  $g=u(u^{-1}v)(v^{-1}g)$, where
\[u\in A(i,M),u^{-1}v\in \mathcal{O}_M(l,2M),v^{-1}g\in A([\rho n-l-i,n-l-i],M)\subseteq B_{n-l-i+M}.\]

Set $\Delta=2M$. We assumed that $G$ has purely exponential growth, so $$|A(n, \Delta)|\asymp_\Delta \exp(\delta_G n)\asymp |B_n|.$$ We thus obtain
\begin{align*}
|U(\varepsilon)\cap A([\rho n,n])|
&\leq \sum_{\substack{0\leq i\leq (1-\varepsilon)n \\ \varepsilon\rho n\leq l\leq n}} |U_i^l| \\
&\leq \sum_{\substack{0\leq i\leq (1-\varepsilon)n \\ \varepsilon\rho n\leq l\leq n}}
|A(i,\Delta)|\cdot|\mathcal{O}(l,\Delta)|\cdot|B_{n-l-i+\Delta}| \\
&\prec_\Delta \sum_{\substack{0\leq i\leq (1-\varepsilon)n \\ \varepsilon\rho n\leq l\leq n}}
 \exp(i\delta_G)\cdot|\mathcal{O}_M(l,\Delta)|\cdot  \exp((n-l-i)\delta_G) \\
&\prec_\Delta  \sum_{\varepsilon\rho n\leq l\leq n}n|\mathcal{O}_M(l,\Delta)|\exp((n-l)\delta_G).
\end{align*}
Therefore, the negligibility of $U(\varepsilon)$ follows from Lemma \ref{lemma:0}.

If the group action is SCC, then there exists $0<\delta_{\mathcal O}<\delta_G$ such that $|\mathcal{O}_M(l,\Delta)|\prec_\Delta \exp(l \delta_{\mathcal O})$. The above computation goes without changes, and so we get
\begin{align*}
|U(\varepsilon)\cap A([\rho n,n])|
&\prec_\Delta  \sum_{\varepsilon\rho n\leq l\leq n}n|\mathcal{O}_M(l,\Delta)|\exp((n-l)\delta_G) \\
&\prec_\Delta   \sum_{\varepsilon\rho n\leq l\leq n}n \exp(l\delta_{\mathcal{O}_M})\cdot\exp((n-l)\delta_G) \\
&\prec_\Delta   n^2 \exp(-(\delta_G-\delta_{\mathcal{O}})\varepsilon\rho n)\exp(n\delta_G).
\end{align*}
Hence, in this case, $U(\varepsilon)$  is exponentially negligible.
\end{proof}

Let $h\in G$ be a contracting element with the axis $\ax(h)=E(h)\cdot o$, where $E(h)$ is the maximal elementary subgroup given in Lemma \ref{lemma:finiteindex}.

Given $\varepsilon\in (0,1)$ and $C>0$,  consider the following set of elements $g\in G$ such that an $\epsilon$-percentage   of $[o, go]$  is contained in some translate of $\ax(h)$.
\begin{equation}\label{Wepsilon}
W(\varepsilon, h, C)=\{g\in G:\exists\alpha=[o,go], \alpha^{\varepsilon} \subset \alpha \cap  N_C(f \ax(h))~ s.t.~|\alpha^\varepsilon|\ge \epsilon |\alpha|~\text{for some}~ f\in G\}.
\end{equation}

\begin{Lemma}\label{lemma:short}
Assume that the   action has PEG. For any $0<\varepsilon<\rho\leq1$ and $C>0$, we have $W(\varepsilon, h, C)$ is exponentially negligible in $(\rho, \Delta)$-annuli in $G$.
\end{Lemma}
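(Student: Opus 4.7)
The plan is to mirror the decomposition argument of Lemma \ref{lemma:out}, but with one essential change: instead of counting elements in $\mathcal{O}_M$ in the middle, we count elements of $E(h)$. Since $E(h)$ is virtually cyclic by Lemma \ref{lemma:finiteindex} and the orbit $\langle h\rangle \cdot o$ is quasi-isometrically embedded, we have the linear bound $|E(h)\cap B_l|\prec l+1$. Coupled with the purely exponential growth $|B_n|\asymp \exp(\delta_G n)$, this linear growth (as opposed to exponential) is exactly what produces the exponential decay, and it does so without invoking the DOP condition at all.

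By Lemma \ref{LargeAnnulus}, it suffices to show $|W(\varepsilon,h,C)\cap A([\rho n,n])|/|A([\rho n,n])|$ decays exponentially. Fix $g\in W(\varepsilon,h,C)\cap A([\rho n,n])$ with $|g|=k\in[\rho n,n]$. By the definition \eqref{Wepsilon}, pick a geodesic $\alpha=[o,go]$ and, as in Lemma \ref{lemma:out}, the first maximal closed subsegment $[x,y]_\alpha$ of length $l\ge\varepsilon k\ge\varepsilon\rho n$ contained in some $N_C(f\cdot \mathrm{ax}(h))$; let $i=d(o,x)$ so that $0\le i\le(1-\varepsilon)n$. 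Since $x,y\in N_C(f\cdot E(h)\cdot o)$, choose $f_1,f_2\in fE(h)$ with $d(x,f_1 o)\le C$ and $d(y,f_2 o)\le C$. Writing
\[
g = f_1\cdot (f_1^{-1}f_2)\cdot (f_2^{-1}g),
\]
one has $f_1\in A(i,C)$, $f_1^{-1}f_2\in E(h)\cap B_{l+2C}$, and $f_2^{-1}g\in B_{n-l-i+2C}$.

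Decomposing according to the pair $(i,l)$ and using PEG together with the linear bound on $E(h)$, we obtain
\begin{align*}
|W(\varepsilon,h,C)\cap A([\rho n,n])|
&\prec_C \sum_{\substack{0\le i\le(1-\varepsilon)n\\ \varepsilon\rho n\le l\le n}}\exp(\delta_G i)\cdot(l+1)\cdot\exp\bigl(\delta_G(n-l-i)\bigr)\\
&\prec n\,\exp(\delta_G n)\sum_{l\ge\varepsilon\rho n}(l+1)\exp(-\delta_G l)\\
&\prec n^2\exp\bigl(\delta_G n-\delta_G\varepsilon\rho n\bigr).
\end{align*}
Dividing by $|A([\rho n,n])|\asymp\exp(\delta_G n)$ yields the exponential decay $n^2\exp(-\delta_G\varepsilon\rho n)\to 0$.

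The only subtlety I anticipate is a mild book-keeping issue: different choices of the translate $f\cdot\mathrm{ax}(h)$ containing $\alpha^\varepsilon$ must not blow up the count. This is handled exactly as in the previous lemma by fixing canonically the \emph{first maximal} subsegment of length $\ge\varepsilon k$ lying in \emph{some} $N_C(f\cdot\mathrm{ax}(h))$, then letting $f_1,f_2$ be the nearest orbit points to its endpoints; this assigns at most a bounded multiplicity to each triple $(f_1,f_1^{-1}f_2,f_2^{-1}g)$. No other obstacle appears, and in particular nothing in the argument requires the SCC hypothesis — only PEG and the virtual cyclicity of $E(h)$ are used, so the exponential rate holds in both the DOP and SCC settings.
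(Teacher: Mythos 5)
Your proof is correct and follows essentially the same route as the paper: both decompose $g$ as a product of a prefix near the origin, an element of $E(h)$ (counted linearly using virtual cyclicity), and a remainder, then sum the corresponding products of cardinalities using PEG. The only cosmetic difference is that the paper truncates the middle subsegment to length exactly $\varepsilon j$ and sums only over the starting index $i$, whereas you let the length $l$ vary over $[\varepsilon\rho n, n]$ and sum over both $(i,l)$; both yield the same $n^2\exp(-\delta_G\varepsilon\rho n)$ decay, and your concern about multiplicity is harmless since the argument only needs a covering of $W(\varepsilon,h,C)\cap A([\rho n,n])$, not a bijection.
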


\begin{proof}
Since $h\in G$ is contracting, and by definition, $i\mapsto h^io$ is a quasi-isometric embedding, we have $|\langle h \rangle \cap B_n|\asymp n$. By Lemma \ref{lemma:finiteindex},   we have $[E(h):\langle h\rangle] <\infty$, so the following holds  $$|  E(h)   \cap B_n|\asymp n.$$

As before, we want to show $\displaystyle\lim_{n\rightarrow\infty}\frac{|W(\varepsilon, h, C)\cap A([\rho n,n])|}{|A([\rho n,n])|}=0$. Let $g\in W(\varepsilon)\cap A([\rho n,n])$, so $\rho n\leq j\leq n$, where $j:=|u|$. By definition of $W(\varepsilon,  h, C)$, there exists $\alpha=[o,go]$, $i\in[0,(1-\varepsilon)j]$ and $f\in G, k\in E(h)$ such that
\[ d(\alpha(i), fo)\leq C,\;d(\alpha(i+\varepsilon j), fko)\leq C.\]
Thus, we have $f\in A(i,C)$ and $d(o,ko)\leq\varepsilon j+2C\leq \varepsilon n+2C$, which yields that $k\in E(h)\cap B_{\varepsilon n+2C}$. Consequently, we can write $g=f k((f k)^{-1}g)$ where $(f k)^{-1}g\in B_{n-i-\varepsilon\rho n+C}$. This gives the following
\[W(\varepsilon,  h, C)\cap A([\rho n,n])\subseteq\bigcup_{i=0}^{(1-\varepsilon)n}A(i,C)\cdot(E(h)\cap B_{\varepsilon n+2C})\cdot B_{n-i-\varepsilon\rho n+C}.\]

Since $G$ has purely exponentially growth, we have the following estimate:
\begin{align*}
|W(\varepsilon,  h, C)\cap A([\rho n,n])|&\leq \sum_{i=0}^{(1-\varepsilon)n}|A(i,C)|\cdot| E(h)\cap B_{\varepsilon n+2C}|\cdot|B_{n-i-\varepsilon\rho n+C}|\\
&\prec n\cdot n\cdot \exp((1-\varepsilon\rho) n\delta_G)
\end{align*}

which clearly concludes the proof of the result.
\end{proof}

We now introduce the third negligible sets of elements which have a fixed percentage being barrier-free. To be precise, we need a bit more notation. Let $\alpha$ be a geodesic and $\varepsilon_1 \le \varepsilon_2\in [0, 1]$. We denote by $\alpha_{[\epsilon_1, \epsilon_2]}$ the subsegment $\alpha([\varepsilon_1 n,\varepsilon_2 n])$ of $\alpha$, where $n=|\alpha|$.

Given $0<\varepsilon_1<\varepsilon_2<1$ and $h\in G$, {we define}
\begin{equation}\label{Vepsilon12}
V(\varepsilon_1,\varepsilon_2, h)=\{g\in G:\exists\alpha=[o,go],s.t.~
\alpha_{[\varepsilon_1,\varepsilon_2]}~\text{is}~(\nu,h)\text{-barrier-free}\}.
\end{equation}

\begin{Lemma}\label{lemma:con}
Fix $\rho\in (0, 1]$, and choose any $\varepsilon_1<\varepsilon_2\in (0, \rho)$ so that $\varepsilon_2\rho\in (\varepsilon_1,\varepsilon_2)$.  Let $h$ be any element. If our group action satisfies the DOP condition  and PEG, then $V(\varepsilon_1,\varepsilon_2, h)$ is negligible in $(\rho, \Delta)$-annuli in $G$.

Moreover, if the action is SCC, then $V(\varepsilon_1,\varepsilon_2, h)$ is exponentially negligible in $G$.
\end{Lemma}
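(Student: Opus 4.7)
The strategy is to use a five-piece decomposition of each $g\in V(\varepsilon_1,\varepsilon_2,h)\cap A([\rho n,n])$ that exposes both its barrier-free middle and the surrounding concave gaps, and then invoke the weighted convolution estimate Lemma \ref{lemma:0}(2). By Lemma \ref{LargeAnnulus} it suffices to show that $|V(\varepsilon_1,\varepsilon_2,h)\cap A([\rho n,n])|/\exp(\delta_G n)\to 0$, with exponential speed in the SCC case.

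First I would enlarge $M$ so that $M>\nu$; this only enlarges $\mathcal{O}_M$ and keeps the hypotheses of Propositions \ref{pro:neg} and \ref{pro:purelyexp} in force. Given such a $g$ with $k=|g|$ and a witnessing geodesic $\alpha=[o,go]$ whose middle $\alpha_{[\varepsilon_1,\varepsilon_2]}$ is $(\nu,h)$-barrier-free, I would locate the two (possibly trivial) maximal open subarcs $\beta_1=\alpha|_{(a_1,a_2)}$ and $\beta_2=\alpha|_{(a_3,a_4)}$ of $\alpha$ lying outside $N_M(Go)$ that contain the distinguished parameters $\varepsilon_1 k$ and $\varepsilon_2 k$. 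The key point is that the enlarged segment $\alpha|_{[a_1,a_4]}$ is itself $(\nu,h)$-barrier-free: any hypothetical barrier $z\in G$ would force $zo\in Go$ to lie within $\nu<M$ of either a concave arm $\beta_i\subset Y\setminus N_M(Go)$ (impossible) or of the already barrier-free middle (contradiction). Anchoring orbit points $u,v$ within $M$ of $\alpha(a_1),\alpha(a_4)$ and setting $f=u^{-1}v$, translation by $u^{-1}$ exhibits a $(\nu,h)$-barrier-free geodesic between $B(o,M)$ and $B(fo,M)$, so $f\in\mathcal{V}_{\nu,h}$. This yields the decomposition $g=u\cdot o_1\cdot f\cdot o_2\cdot w$ with $o_1,o_2\in\mathcal{O}_M$ and $u,w\in G$.

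Writing $i=|u|$, with $l_j$ the lengths of $o_1,f,o_2$ and $l=l_1+l_2+l_3$, the requirement that $\beta_1$ contains $\varepsilon_1 k$ pins $i$ to the interval $[\varepsilon_1 k-l_1,\varepsilon_1 k]$, giving at most $l_1+1$ possibilities; once $i$ is chosen, the remaining positional parameters are determined up to additive $O(M)$, and $l\ge a_4-a_1\ge(\varepsilon_2-\varepsilon_1)k\ge(\varepsilon_2-\varepsilon_1)\rho n$. Counting the choices of $u$ and $w$ via purely exponential growth $|A(j,\Delta)|\asymp\exp(\delta_G j)$ gives
\[
|V(\varepsilon_1,\varepsilon_2,h)\cap A([\rho n,n])|\prec\sum_{k=\rho n}^{n}\exp(\delta_G k)\sum_{l_1+l_2+l_3=l}(l_1+1)\,|\mathcal{O}_M(l_1,\Delta)|\,|\mathcal{V}_{\nu,h}(l_2,\Delta)|\,|\mathcal{O}_M(l_3,\Delta)|\,\exp(-\delta_G l).
\]
Dividing by $\exp(\delta_G n)$ and collapsing the geometric sum in $k$ reduces matters to the limit evaluated in Lemma \ref{lemma:0}(2) taken with $\varepsilon=(\varepsilon_2-\varepsilon_1)\rho$, yielding negligibility in the DOP case and exponential negligibility in the SCC case via the final clause of that lemma.

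The main obstacle is verifying the extended barrier-free property across the concave gaps---which hinges essentially on having chosen $M>\nu$---and identifying correctly the positional ambiguity of $i$ as having size $l_1+1$, so that the bound lines up exactly with the weighted convolution in Lemma \ref{lemma:0}(2). A simpler three-piece decomposition $g=u\cdot f\cdot w$ relying only on Proposition \ref{pro:neg} would leave an unabsorbed factor of $n$ and would be insufficient in the DOP case, where $\mathcal{V}_{\nu,h}$ is only assumed summable against $\exp(-\delta_G\cdot)$.
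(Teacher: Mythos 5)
Your proposal follows essentially the same strategy as the paper: a five-piece decomposition exposing a barrier-free middle flanked by (possibly trivial) $\mathcal{O}_M$-pieces, the observation that the position $i$ of the first piece is pinned to $O(l_1)$ choices, and an appeal to Lemma~\ref{lemma:0}(2) after dividing by $\exp(\delta_G n)$. Your closing remark that a three-piece $u\cdot f\cdot w$ decomposition would leave an unabsorbed factor of $n$ is exactly the right diagnosis and the reason the paper uses the finer decomposition. However, there are two issues in the middle of your argument that should be flagged.

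First, the claim that the \emph{enlarged} segment $\alpha|_{[a_1,a_4]}$ is itself $(\nu,h)$-barrier-free hinges on $\nu<M$, which you propose to arrange ``by enlarging $M$.'' This is not justified by the statements in the paper: Propositions~\ref{pro:purelyexp} and~\ref{pro:neg} produce $\nu=\nu(M)$ \emph{after} $M$ is chosen, and nothing guarantees $\nu(M)<M$ for some admissible $M$. Worse, this extended barrier-free claim is not actually needed. The exposition also contradicts itself: you define $f=u^{-1}v$ with $u,v$ anchored at $\alpha(a_1),\alpha(a_4)$, which would give a \emph{three}-piece splitting $g=u\cdot f\cdot(v^{-1}g)$, yet the next sentence asserts a five-piece splitting $g=u\cdot o_1\cdot f\cdot o_2\cdot w$. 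The clean version — and the one the paper uses — anchors four orbit points near $\alpha(a_1),\alpha(a_2),\alpha(a_3),\alpha(a_4)$ and takes the barrier-free factor to be $u_2^{-1}u_3$, corresponding to the \emph{inner} subinterval $\alpha|_{[a_2,a_3]}$. Since $a_2>\varepsilon_1 k$ and $a_3<\varepsilon_2 k$, this inner segment is automatically a subsegment of $\alpha_{[\varepsilon_1,\varepsilon_2]}$ and hence $(\nu,h)$-barrier-free by hypothesis, with no constraint whatsoever on the relative sizes of $\nu$ and $M$. You should replace the $[a_1,a_4]$ claim with this inclusion; it removes a genuine gap.

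Second, a smaller point on the choice of anchor positions: you place them at the $g$-dependent parameters $\varepsilon_1 k$ and $\varepsilon_2 k$, whereas the paper fixes them at $\alpha(\varepsilon_1 n)$ and $\alpha(\varepsilon_2\rho n)$ — this is precisely why the hypothesis $\varepsilon_2\rho\in(\varepsilon_1,\varepsilon_2)$ appears in the statement, so that these \emph{fixed} positions lie inside $[\varepsilon_1 k,\varepsilon_2 k]$ for every $k\in[\rho n,n]$. With fixed anchors, ``$l_1+1$ choices of $i$'' is immediate and independent of $k$; with your $k$-dependent anchors the pinning of $i$ couples $i$ to $k=i+l+|w|$, and one must argue (correctly but with more care, and an extra constant $\tfrac{1}{1-\varepsilon_1}$) that for each value of $i+|w|$ there are still only $O(l_1)$ admissible $i$. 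Your estimate does go through, but the paper's device is cleaner and explains the seemingly odd hypothesis in the statement. Finally, note that the case where $\beta_1$ and $\beta_2$ are the same arc (so the middle piece degenerates) must be excluded by first removing $U(\varepsilon_2-\varepsilon_1)$ via Lemma~\ref{lemma:out}, which the paper does explicitly; you should say the same.
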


\begin{proof}
By Lemma \ref{LargeAnnulus}, it suffices to prove that $\displaystyle\frac{|V(\varepsilon_1,\varepsilon_2, h)\cap A([\rho n,n])|}{|A([\rho n,n])|}\rightarrow 0$ as $n\rightarrow\infty$.
Let $g\in V(\varepsilon_1,\varepsilon_2, h)\cap A([\rho n,n])$ and denote  $|g|=k$, so $\rho n\le k \le n$. By definition of $V(\varepsilon_1,\varepsilon_2, h)$, there exists a geodesic $\alpha=[o,go]$, so that $\alpha([\varepsilon_1k,\varepsilon_2k])$ is $(\nu,h)$-barrier-free.
Set $x=\alpha(\varepsilon_1 n),y=\alpha(\varepsilon_2 \rho n)$. By the choice of $\varepsilon_2 \rho\in (\varepsilon_1,\varepsilon_2)$, we see that $[x, y]_\alpha=\alpha([\varepsilon_1 n,\varepsilon_2 \rho n])$ is a subsegment of $\alpha([\varepsilon_1k,\varepsilon_2k])$, and thus is $(\nu,h)$-barrier-free.

We now subdivide our discussion into three cases, the first two of which could be viewed degenerate cases of  the third one. However, we treat them separately in order to illustrate the idea of the latter one.

\textbf{Case 1.}\,\,Assume that $x,y\in N_M(Go)$ so there exists $u,v\in G$ such that $$d(x,uo)\leq M,d(y,vo)\leq M.$$ Thus, $[x,y]_{\alpha}$ is a $(\nu,h)$-barrier-free geodesic between $B(uo,M)$ and $B(vo,M)$. So $u^{-1}v\in \mathcal{V}_{\nu, h}$.

Denote $\varepsilon=\varepsilon_2 \rho-\varepsilon_1>0$.  Since $d(x,y)=\varepsilon n$ and $|d(uo,vo)-d(x,y)|\leq 2M$, we have $$u^{-1}v\in \mathcal{V}_{\nu, h}( \varepsilon n,2M).$$ Clearly we have, $$u\in A(\varepsilon_1n,M),\;\; v^{-1}g\in A(k-\varepsilon_2 \rho n,M).$$
 Therefore, setting $\Delta=2M$, we obtain that $g=u(u^{-1}v)(v^{-1}g)$ lies the following set
\begin{equation}\label{Case1Set}
A(\varepsilon_1 n,\Delta)\cdot\mathcal{V}_{\nu, h}(\varepsilon n,\Delta)\cdot A(k-\varepsilon_2 \rho n, \Delta).
\end{equation}

\textbf{Case 2.}\,\, Assume that one of $\{x, y\}$ lies outside $N_M(Go)$. Let's assume first that $x\in N_M(Go),y\notin N_M(Go)$, so there exists $u\in G$ such that $d(x,uo)\leq M$. Consider the maximal open segment $(y_1,y_2)$ of $\alpha$ which contains $y$ but lies outside $N_M(Go)$. Hence, there exists $v_1,v_2\in G$ such that $d(y_i,v_io)\leq M$ for $i=1,2$. By definition, we have $v_1^{-1}v_2\in \mathcal{O}_M$.

Set $s=d(o,y_1)\in[\varepsilon_1 n,\varepsilon_2 \rho n],t=d(o,y_2)\in[\varepsilon_2 \rho n,k]$, where $n \ge k\ge \rho n$. Thus, $d(y_1,y_2)=t-s$, and $|d(v_1o,v_2o)-(t-s)|\leq2M\leq\Delta$. This means that $$v_1^{-1}v_2\in \mathcal{O}_M(t-s,\Delta).$$
Similarly as above, we have that
\[u^{-1}v_1\in\mathcal{V}_{\nu, h}(s-\varepsilon_1 n,\Delta),~v_2^{-1}g\in A(k -t,\Delta).\]

Consequently,  the element $g=u(u^{-1}v_1)(v_1^{-1}v_2)(v_2^{-1}g)$ lies in the following set
\begin{equation}\label{Case2Set}
A(\varepsilon_1 n,\Delta)\cdot\mathcal{V}_{\nu, h}(s-\varepsilon_1 n,\Delta)\cdot\mathcal{O}_M(t-s,\Delta)\cdot A(k-t,\Delta)
\end{equation}
where $s\in[\varepsilon_1 n,\varepsilon_2 \rho n]$ and $t \in[\varepsilon_2 \rho n,k]$.

Similarly, when $x\notin N_M(Go)$ and $y\in N_M(Go)$, we obtain
\[g\in A(i,\Delta)\cdot\mathcal{O}_M(j-i,\Delta)\cdot\mathcal{V}_{\nu, h}(\varepsilon_2\rho n-j,\Delta)\cdot A(k-\varepsilon_2\rho n,\Delta),\]
where $i\in[0,\varepsilon_1n],j\in[\varepsilon_1n,\varepsilon_2\rho n]$.

 \textbf{Case 3.}\,\, We now consider the general case  that $x,y\notin N_M(Go)$. Recall that $\varepsilon=\varepsilon_2\rho-\varepsilon_1$.  By  Lemma \ref{lemma:out}, the set $U(\varepsilon)$ is negligible. Without loss of generality, we can assume that   $g\notin U(\varepsilon)$. This implies that  $[x,y]_\alpha\cap N_M(Go)\ne \emptyset$. Indeed, if not,  then the geodesic segment $[x,y]_\alpha$ lies outside $N_M(Go)$. Since $[x,y]_\alpha$ is a subsegment of $\alpha=[o,uo]$ of length $(\varepsilon_2\rho-\varepsilon_1)n$ outside $N_M(Go)$,   we obtain  $g\in U(\varepsilon_2\rho-\varepsilon_1)$, that is a contradiction.

Hence, consider the maximal open segments $(x_1,x_2)_\alpha,(y_1,y_2)_\alpha$ of $\alpha$ outside $N_M(Go)$ which contain $x,y$ respectively. Since $[x,y]_\alpha\cap N_M(Go)\ne \emptyset$, these two intervals are disjoint.

Denote $i=d(o,x_1),j=d(o, x_2)$ and $s=d(o, y_1),t=d(o,y_2)$. Then $i\in[0, \varepsilon_1 n], j< s\in [ \varepsilon_1 n, \varepsilon_2 \rho n], t\in [\varepsilon_2\rho n, k]$, where $k\in [\rho n, n]$.  By the same reasoning as in the previous two cases, we have
\begin{equation}\label{Case3Set}
g\in A(i,\Delta)\cdot\mathcal{O}_M(j-i,\Delta)\cdot\mathcal{V}_{\nu, h}(s-j,\Delta)
\cdot\mathcal{O}_M(t-s,\Delta)\cdot A( k-t,\Delta)
\end{equation}
for each $g\in V(\varepsilon_1,\varepsilon_2)\cap A([\rho n,n])$ with $|g|=k$.

Note that $\varepsilon_2\rho\in (\epsilon_1,\epsilon_2)$ and $k\in [\rho n, n]$. We look at the index set $$\Lambda=\{(i,j,s,t)\in\mathbb{N}^4:0\leq i\leq\varepsilon_1n\leq j\leq s\leq\varepsilon_2\rho n\leq t\leq n\},$$
over which, we define
$$
V_{(i,j),(s,t)}:=A(i,\Delta)\cdot\mathcal{O}_M(j-i,\Delta)\cdot\mathcal{V}_{\nu, h}(s-j,\Delta)
\cdot\mathcal{O}_M(t-s,\Delta)\cdot B_{n-t+\Delta}.
$$

Combining (\ref{Case1Set}), (\ref{Case2Set}) and (\ref{Case3Set}),  we have the following decomposition
\begin{equation} \label{eq:decom}
V(\varepsilon_1,\varepsilon_2)\cap A([\rho n,n])\subseteq  \bigcup_{(i,j,s,t)\in\Lambda} V_{(i,j),(s,t)},
\end{equation}
up to a negligible set $U(\varepsilon)$.

To conclude the proof, it remains to show that the right-hand set in (\ref{eq:decom}) is negligible. For that purpose, we consider a triple of lengths $(l_1, l_2, l_3)$ with $l_1+l_2+l_3=l\in[\varepsilon n,n]$. We observe that there are at most $(l_1+1)$ indexes $(i,j,s,t)\in\Lambda$ satisfying $j-i=l_1,s-j=l_2,t-s=l_3$. In fact, we can choose some $i \in [0, \varepsilon_1 n]$ first, and once $i$ is fixed, then $j, s, t$ are all determined by the triple $(l_1, l_2, l_3)$. However, the choice of $i$ can only change from $\varepsilon_1   n-l_1$ to $\varepsilon_1n$, so we have at most $l_1+1$ many $(i,j,s,t)\in\Lambda$ falling in the same triple $(l_1, l_2, l_3)$.

For each $V_{(i,j), (s,t)}$ with $j-i=l_1,s-j=l_2,t-s=l_3$, we  have the following estimate:
\begin{align*}
|V_{(i,j),(s,t)}|& \leq |A(i,\Delta)|\cdot|\mathcal{O}_M(j-i,\Delta)|\cdot|\mathcal{V}_{\nu, h}(s-j,\Delta)|
\cdot|\mathcal{O}_M(t-s,\Delta)|\cdot|B_{n-t+\Delta}| \\
&\prec  \exp(i\delta_G)\cdot |\mathcal{O}_M(l_1,\Delta)|\cdot|\mathcal{V}_{\nu, h}(l_2,\Delta)|\cdot|\mathcal{O}_M(l_3,\Delta)|
\cdot  \exp((n-l_1-l_2-l_3-i)\delta_G) \\
&\prec  \exp(n \delta_G) \cdot |\mathcal{O}_M(l_1,\Delta)|\cdot|\mathcal{V}_{\nu, h}(l_2,\Delta)|\cdot|\mathcal{O}_M(l_3,\Delta)|
\exp((-l_1-l_2-l_3)\delta_G),
\end{align*}
where we used $|B_{n-t+\Delta}| \asymp \exp((n-t)\delta_G)$ since the action has purely exponential growth.

Since the indexes $(i,j,s,t)\in\Lambda$ can be grouped according to the triple $(l_1, l_2, l_3)$, we obtain
\begin{align*}
&\frac{\sum_{(i,j,s,t)\in\Lambda} |V_{(i,j),(s,t)}|}{\exp(n\delta_G)} \\
\leq &    \sum_{\substack{\varepsilon n\leq l\leq n}}^{l_1+l_2+l_3=l} (l_1+1)|\mathcal{O}(l_1,\Delta)|\cdot|\mathcal{V}_{\nu, h}(l_2,\Delta)|\cdot|\mathcal{O}(l_3,\Delta)|
\exp((-l_1-l_2-l_3)\delta_G).
\end{align*}
This tends $0$ as $n\to \infty$ by Lemma \ref{lemma:0}(2). We conclude that   $V(\varepsilon_1,\varepsilon_2, h)\cap A([\rho n,n])$ is negligible. When the action is SCC, the above inequality tends to $0$ exponentially fast. The proof of the result is complete.
\end{proof}

\subsection{Negligible pairs of elements}
The goal of Theorem \ref{thm:gen} is to show a random pair $(u_1, u_2)\in G^{(2)}$ generates a free group of rank $2$. We now define two negligible sets of 2-tuples $(u_1, u_2)\in G^{(2)}$, whose properties shall fail to be a free basis.

For any $u\in G$, let $\alpha=[o,uo]$ be any geodesic with length parametrization $\alpha(t)$. Define $\overline{\alpha}=[o,u^{-1}o]$ to be the geodesic with parametrization  $\overline{\alpha}(t):=u^{-1}\alpha(|u|-t)$.

Given $0<\varepsilon_1<\varepsilon_2<1$ and $C>0$, let $Z(\varepsilon_1,\varepsilon_2)$ be the set of $u\in G$ such that for some   $\alpha=[o,uo]$,  one of the following holds:
\begin{enumerate}
\item
$\overline{\alpha}$ intersect the $C$ neighborhood of the subsegment $\alpha_{[\varepsilon_1,\varepsilon_2]}$ of $\alpha$ or
\item
  $\alpha$ intersect the $C$ neighborhood of the subsegment ${\overline{\alpha}}_{[\varepsilon_1,\varepsilon_2]}$ of $\overline{\alpha}$.
\end{enumerate}

 In other words,
 \begin{equation}\label{Zepsilon12}
Z(\varepsilon_1,\varepsilon_2, C)=\{u\in G:\exists\alpha=[o,uo], s.t.~\overline{\alpha}\cap N_C(\alpha_{[\varepsilon_1,\varepsilon_2]})\neq \emptyset~\text{or}~
\alpha\cap N_C({\overline{\alpha}}_{[\varepsilon_1,\varepsilon_2]})\neq \emptyset\}.
\end{equation}

\begin{Lemma} \label{lemma:sep1}
Let    $0<\varepsilon_1<\varepsilon_2\leq1-\varepsilon_1<\rho<1$ and $C>0$. If our group action satisfies the DOP condition  {and purely exponential growth}, then $Z(\varepsilon_1,\varepsilon_2, C)$ is negligible in $(\rho, \Delta)$-annuli in $G$. Moreover, if the action is SCC, then $Z(\varepsilon_1,\varepsilon_2, C)$ is exponentially negligible in $G$.
\end{Lemma}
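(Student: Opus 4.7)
The plan is to exploit the geometric pinching from the definition of $Z$: the two geodesics $\alpha=[o,uo]$ and $\overline{\alpha}=[o,u^{-1}o]$ get close at the same parameter in their middles, and this forces $u$ either to admit a conjugation factorization whose length pattern can be counted via purely exponential growth, or to belong to an already-negligible set.

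Fix $u\in Z(\varepsilon_1,\varepsilon_2,C)\cap A([\rho n,n])$ with $k=|u|$. By the symmetry $u\mapsto u^{-1}$, which swaps the two cases in the definition (since $\alpha_{u^{-1}}=\overline{\alpha}_u$ and $\overline{\alpha}_{u^{-1}}=\alpha_u$), I may WLOG treat case (1): there exist $\alpha=[o,uo]$, $t\in[\varepsilon_1 k,\varepsilon_2 k]$ and $s$ with $d(\alpha(t),\overline{\alpha}(s))\le C$. The triangle inequality at $o$ yields $|s-t|\le C$, and since $\overline{\alpha}$ is a geodesic, $d(\alpha(t),\overline{\alpha}(t))\le 2C$. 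Applying the isometry $u$ to $\overline{\alpha}(s)=u^{-1}\alpha(k-s)$ gives $d(u\alpha(t),\alpha(k-t))\le 2C$. Reducing to $\alpha(t),\alpha(k-t)\in N_M(Go)$ via the negligibility of $U(\varepsilon)$ (Lemma~\ref{lemma:out}) and inserting $\mathcal{O}_M$-factors for any maximal subsegments outside $N_M(Go)$, following Cases~2--3 of the proof of Lemma~\ref{lemma:con}, I pick $g\in A(t,O(1))$, $g'\in A(k-t,O(1))$ close to these points. The estimate $d(ugo,g'o)\le 2M+2C$ from the previous step gives $u=g'w^{-1}g^{-1}$ for some $w\in B_{2M+2C}$; equivalently the conjugation $u=gcg^{-1}$ with $c=g^{-1}ug$ satisfies $|c|=|k-2t|+O(1)$.

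I split on whether $t\le k/2$ or $t>k/2$. When $t\le k/2$ the factorization is tight ($|u|=k=2|g|+|c|+O(1)$), and purely exponential growth bounds the number of pairs $(g,c)$ with $|g|\asymp t$, $|c|\asymp k-2t$ by $\exp(\delta_G t+\delta_G(k-2t))=\exp(\delta_G(k-t))$. Summing over $t\in[\varepsilon_1 k,k/2]$ and $k\in[\rho n,n]$, and dividing by $|A([\rho n,n])|\asymp\exp(\delta_G n)$, yields ratio $\prec n^2\exp(-\delta_G\varepsilon_1 n)$, which decays exponentially. When $t>k/2$ the factorization is loose, so instead I use
\[
|u^2|=d(uo,u^{-1}o)\le d(uo,\alpha(t))+d(\alpha(t),\overline{\alpha}(t))+d(\overline{\alpha}(t),u^{-1}o)\le 2(k-t)+2C<k+2C,
\]
showing such $u$ have anomalously small square. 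By Lemma~\ref{lemma:con}, the set $V(\varepsilon_1',\varepsilon_2',h)$ is negligible for appropriate parameters, and for $u\notin V$ the $(\nu,h)$-barrier in the middle of $\alpha_u$ combined with Proposition~\ref{pro:quasi-geo} and Lemma~\ref{lemma:notint} allows me to build a $(\Lambda,0)$-quasi-geodesic from $o$ through translates of $\ax(h)$ and $u\,\ax(h)$ to $u^2o$. Choosing $D$ large enough to force $\Lambda<2$, this yields $|u^2|\ge 2k/\Lambda-O(1)>k+2C$ for large $k$, contradicting the bound above. Thus the $t>k/2$ portion of $Z$ lies in $V$ and is negligible. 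In the SCC case, $U$, $V$, and the direct count in the easy subcase are all exponentially negligible, giving the stronger conclusion.

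The main obstacle is the subcase $t>k/2$: the naive conjugation count overcounts badly (since $2|g|+|c|>k$), and the rescue requires converting a middle barrier on $\alpha_u$ into the lower bound $|u^2|\ge 2|u|-O(1)$ via the admissible-path machinery of Section~\ref{Sect2}. The careful choice of admissibility constants to push $\Lambda$ below $2$ is the technically delicate piece.
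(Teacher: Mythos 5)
Your decomposition $u=gcg^{-1}$ with $|g|\asymp t$, $|c|\asymp|k-2t|$ and the resulting count in the regime $t\le k/2$ is sound and matches what the paper does (the paper's Cases~1--3 with the conjugation factorization $u=v(v^{-1}uw)(w^{-1}v)v^{-1}$; note the paper does not carry out an explicit $t\le k/2$ vs.\ $t> k/2$ case split). Your observation that $|u^2|\le 2(k-t)+2C<k+2C$ when $t>k/2$ is correct and clever. The problem is the step you yourself flag as delicate: ``choosing $D$ large enough to force $\Lambda<2$.'' Proposition~\ref{pro:quasi-geo} produces a quasi-geodesic constant $\Lambda=\Lambda(\kappa,\tau)$ that depends only on the contracting constant $\kappa$ and the bounded-projection constant $\tau$, and is \emph{independent} of $D$; making $D$ larger only requires $D>D_0(\kappa,\tau)$ but does not push $\Lambda$ toward $1$. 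So the admissible path $\alpha\cdot u\alpha$ from $o$ through $uo$ to $u^2o$ gives only $|u^2|\ge 2k/\Lambda$, and without $\Lambda<2$ this does not contradict $|u^2|<k+2C$. Moreover the lower bound you would like ($|u^2|\ge 2k-O(\varepsilon k)$, which is what the contradiction really needs) would require an \emph{additive}, not multiplicative, control on the corner at $uo$, i.e.\ that $d(w_1,uv_2)\approx d(w_1,uo)+d(uo,uv_2)$ where $w_1,uv_2$ are the relevant exit/entry points of the barriers on $\alpha$ and $u\alpha$. To run that projection argument you would have to know that $\overline{\alpha}$ stays away from the barrier on $\alpha$, which is precisely a $Z$-type condition on $u$ --- so that route is circular as stated.

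To summarize: the case $t\le k/2$ is fine (and is essentially the paper's argument), but the case $t>k/2$, as written, has a genuine gap because Proposition~\ref{pro:quasi-geo} does not give $\Lambda<2$. One possible repair in the spirit of your idea: first dispose of $Z(\varepsilon_1,1/2,C)$ by the easy conjugation count, then for the remaining elements one knows $\overline\alpha$ avoids $N_C(\alpha_{[\varepsilon_1,1/2]})$, which for $\varepsilon$ small enough contains the barrier interval $\alpha_{[2\varepsilon,3\varepsilon]}$; that non-circular separation could then feed a genuine contracting projection estimate giving additive lower bounds on $|u^2|$. But this extra bootstrapping step needs to be made explicit, and as the proposal stands the contradiction is not established.
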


\begin{proof}
For any $u\in Z(\varepsilon_1,\varepsilon_2)\cap A([\rho n,n])-U(\frac{\varepsilon_1}{8})$, there exists $\alpha=[o,uo]$ satisfying the condition in the definition of $Z(\varepsilon_1,\varepsilon_2)$. Denote $j=|u|$ and then $\rho n\leq j\leq n$.

Without loss of generality, assume that $\overline{\alpha}\cap N_C(\alpha_{[\varepsilon_1,\varepsilon_2]})\neq \emptyset$. By definition, there exists $\varepsilon_1j\leq i\leq\varepsilon_2j$, so that $\overline{\alpha}\cap N_C(\alpha(i))\neq\emptyset$. Thus, there exists $s\in[i-C,i+C]$ such that  $$d(\overline{\alpha}(s),\alpha(i))\leq C.$$
Set $x=\alpha(i),y=\overline{\alpha}(s)=u^{-1}\alpha(j-s)$. Thus, $d(x, y)\le C$.

We follow a  similar analysis as in the proof of Lemma \ref{lemma:con}.

\textbf{Case 1.}\,\,Assume that $x,y\in N_M(Go)$, so there exist $v,w\in G$ such that $$d(x,vo)\leq M,\; d(y,wo)\leq M.$$
This implies $d(vo, wo)\le d(x, y)+2M\le 2M+C$, so $v^{-1}w\in B_{2M+C}$. Since $\overline \alpha(0)=o$ and $s\in[i-C,i+C]$,  we have $d(o, y)=s$ and then $|d(o, wo)-s|\le M+C$. Thus,
$$v\in A(i,M), w\in A(i,M+C).$$

We  can now write $u=v(v^{-1}uw)(w^{-1} v)v^{-1}$, where
\begin{align*}
d(uwo, vo)&\le d(o, uy)-d(o, x)+2M\\
&\le d(o, \alpha(j-s))-d(o, \alpha(i))+2M\\
&\le j-s-i+2M\le j-2i+C.
\end{align*}
which implies $v^{-1}uw\in A(j-2i,2M+C)$.

Noting that $v\in A(i,M)$, the set of elements $u$ in this case belongs to the following set
$$
A(i,M)\cdot A(j-2i,2M+C)\cdot B_{2M+C} \cdot A(i,M).
$$

\textbf{Case 2.}\,\,Assume that one of $\{x, y\}$ lies outside $N_M(Go)$. For definiteness, assume that $x\in N_M(Go),y\notin N_M(Go)$; the other case is symmetric. Then there exists $v\in G$ such that $d(x,vo)\leq M$. Consider the maximal open segment $(y_1,y_2)$ of $\overline{\alpha}$ which contains $y$ but lies outside $N_M(Go)$. Hence, there exists $w\in G$ such that $d(y_1,wo)\leq M$.

Since $u\notin U(\frac{\varepsilon_1}{8})$ is assumed and then $u^{-1}\notin U(\frac{\varepsilon_1}{8})$ by definition, we obtain that $d(y_1,y_2)< \frac{\varepsilon_1}{8}j$. Thus we have $d(y_1,y)\leq d(y_1,y_2)< \frac{\varepsilon_1}{8}j$. This yields
$$d(vo,wo)\leq d(vo, x)+d(x, y)+d(y, y_1)+d(y_1, wo)\le  \frac{\varepsilon_1}{8}j+2M+C.$$

Hence, we can also write $u=v(v^{-1}uw)(w^{-1} v)v^{-1}$, where
\[v\in A(i,M),v^{-1}w\in B_{\frac{\varepsilon_1}{8}j+2M+C},
v^{-1}uw\in B_{j-2i+\frac{\varepsilon_1}{8}j+2M+C}.\]

\begin{comment}Symmetrically, if $x\notin N_M(Go),y\in N_M(Go)$. We can write $u=v(v^{-1}uw)w^{-1}$, where
\[v\in B_{i+M+C},w\in A(i,M),v^{-1}w\in B_{\frac{\varepsilon_1}{8}j+2M+C},
v^{-1}uw\in B_{j-2i+\frac{\varepsilon_1}{8}j+2M+C}.\]
\end{comment}

\textbf{Case 3.}\,\, Assume $x,y\notin N_M(Go)$.
Consider the maximal open segment $(x_1,x_2)_\alpha$ (resp. $(y_1,y_2)_\alpha$) of $\alpha$ (resp. $\overline{\alpha}$) which contains $x$ (resp. $y$) but lies outside $N_M(Go)$. Then there exist $v,w\in G$ such that $d(x_1,vo)\leq M,d(y_1,wo)\leq M$.  Similar argument as above we have the following conclusion: we  can write  $$u=v(v^{-1}uw)(w^{-1} v)v^{-1},$$ where
\[v \in B_{i+M+C},v^{-1}w\in B_{\frac{\varepsilon_1}{4}j+2M+C},
v^{-1}uw\in B_{j-2i+\frac{\varepsilon_1}{4}j+2M+C}.\]

Set $\Delta=2M+C$.
Summarizing the above three cases, we have
\begin{align*}
&|Z(\varepsilon_1,\varepsilon_2)\cap A([\rho n,n])\setminus U(\frac{\varepsilon_1}{8})|\\
\leq & 2\sum_{j=\rho n}^{n}\sum_{i=\varepsilon_1j}^{\varepsilon_2j} |B_{i+\Delta}|\cdot
|B_{j-2i+\frac{\varepsilon_1}{4}j+\Delta}|\cdot|B_{\frac{\varepsilon_1}{4}j+\Delta}|\\
\prec &  (1-\rho)n\cdot (\varepsilon_2-\varepsilon_1)n\cdot \exp((1-\frac{\varepsilon_1}{2})n\delta_G)
\end{align*}
where the last line used
$$|B_{i+\Delta}|\cdot
|B_{j-2i+\frac{\varepsilon_1}{4}j+\Delta}|\cdot|B_{\frac{\varepsilon_1}{4}j+\Delta}| \asymp \exp(\delta_G(j+\frac{\varepsilon_1}{2}j -i))$$
which follows from the purely exponentially growth.

This shows that $Z(\varepsilon_1,\varepsilon_2, C)\cap A([\rho n,n])\setminus U(\frac{\varepsilon_1}{8})$ is negligible.
By Lemma \ref{lemma:out},  $U(\frac{\varepsilon_1}{8})$ is negligible, Thus the conclusion follows.
\end{proof}

Fix $0<\varepsilon_1<\varepsilon_2<\rho<1$ and $C>0$. Let $T(\varepsilon_1,\varepsilon_2, C)$ be the set of $(u_1,u_2)\in G\times G$ with the following property:

there exist two geodesics $\alpha:=[o,u_1o], \beta:=[o,u_2o]$ such that neither of them disjoint the $C$ neighborhood of the $[\varepsilon_1,\varepsilon_2]$-interval of the other.  In other words,
\begin{equation}\label{Tepsilon12}
T(\varepsilon_1,\varepsilon_2, C)= \left\{ \begin{array}{ll}
(u_1,u_2)\in G\times G:&\exists\alpha:=[o,u_1o], \beta:=[o,u_2o], s.t.~ \alpha\cap N_C(\beta_{[\varepsilon_1,\varepsilon_2]})\neq \emptyset~\\
& \text{or}~\beta \cap N_C(\alpha_{[\varepsilon_1,\varepsilon_2]})\neq \emptyset.
\end{array} \right\}.
\end{equation}

\begin{Lemma} \label{lemma:sep}
For any $0<\varepsilon_1<\varepsilon_2\leq 1-\varepsilon_1<\rho<1$ and $C>0$, if our group action satisfies the DOP condition and PEG condition, then $T(\varepsilon_1,\varepsilon_2, C)$ is negligible in $(\rho, \Delta)$-annuli in $G\times G$.

Moreover, if the action is SCC, then $T(\varepsilon_1,\varepsilon_2, C)$ is exponentially negligible in $G\times G$.
\end{Lemma}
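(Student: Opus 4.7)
The proof runs parallel to that of Lemma \ref{lemma:sep1}, with the inverse pair $(\alpha, \overline\alpha)$ of a single element replaced by two independent geodesics $\alpha = [o, u_1 o]$ and $\beta = [o, u_2 o]$. By the symmetric role of $\alpha$ and $\beta$ in the defining condition of $T(\varepsilon_1, \varepsilon_2, C)$, I focus on pairs with $\alpha \cap N_C(\beta_{[\varepsilon_1, \varepsilon_2]}) \ne \emptyset$. Fix a small auxiliary constant $\varepsilon_0 \in (0, \varepsilon_1)$; by Lemma \ref{lemma:out} applied to each coordinate, pairs $(u_1, u_2)$ with $u_1 \in U(\varepsilon_0)$ or $u_2 \in U(\varepsilon_0)$ form a negligible set in $(\rho, \Delta)$-annuli of $G \times G$, exponentially so in the SCC case, so I may assume neither coordinate lies in $U(\varepsilon_0)$.

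For such a pair, pick $i \in [\varepsilon_1 |u_2|, \varepsilon_2 |u_2|]$ and $x \in \alpha$ with $d(x, \beta(i)) \le C$, and set $j = d(o, x)$, so $|j - i| \le C$. Mimicking the three-case analysis in the proof of Lemma \ref{lemma:sep1}, I locate points of $N_M(Go)$ near $x$ along $\alpha$ and near $\beta(i)$ along $\beta$: the hypothesis $u_r \notin U(\varepsilon_0)$ limits the maximal open subsegment of $\alpha$ (resp.\ $\beta$) through $x$ (resp.\ $\beta(i)$) and disjoint from $N_M(Go)$ to length at most $\varepsilon_0 n$, hence the displacements from $x, \beta(i)$ to nearby $N_M(Go)$ points along the corresponding geodesic are at most $\varepsilon_0 n$. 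Pick $v, w \in G$ whose orbits land within $M$ of these nearby points, giving $v \in A(j', \Delta)$, $w \in A(i', \Delta)$ with $|j' - j|, |i' - i| \le \varepsilon_0 n$ and $v^{-1} w \in B_{2\varepsilon_0 n + 2M + C}$. Decompose $u_1 = v \cdot v_1$ and $u_2 = w \cdot w_2$ with $v_1, w_2$ in annuli of inner radii $|u_1|-j'$, $|u_2|-i'$ and thickness $O(\varepsilon_0 n + \Delta)$.

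Using purely exponential growth, the number of pairs $(u_1, u_2)$ obtained this way is bounded by
\[
\mathrm{poly}(n) \cdot \exp\!\bigl((|u_1| + (1 - \varepsilon_1) |u_2| + c \varepsilon_0 n)\, \delta_G\bigr)
\]
for some absolute constant $c > 0$; the saving $-\varepsilon_1 |u_2|$ comes from summing $\exp(-i' \delta_G)$ over $i'$ in a range starting near $\varepsilon_1 |u_2|$, and the $c \varepsilon_0 n$ gathers the $O(\varepsilon_0 n)$ thickness terms from $v^{-1}w$, $v_1$, and $w_2$. Maximizing over $|u_1|, |u_2| \le n$ and dividing by $|A([\rho n, n])|^2 \asymp \exp(2 n \delta_G)$, the ratio is bounded by $\mathrm{poly}(n) \cdot \exp(-(\varepsilon_1 - c \varepsilon_0) n \delta_G)$, which decays exponentially once $\varepsilon_0$ is chosen small enough (e.g.\ $\varepsilon_0 < \varepsilon_1/(2c)$). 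Under SCC the same estimate becomes exponential throughout. The principal technical nuisance is case 3 of the case analysis, where both displacements can reach $\varepsilon_0 n$, and one must ensure $\varepsilon_0 \ll \varepsilon_1$ so that the saving from the $i$-integration dominates the aggregated errors.
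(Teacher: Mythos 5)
Your proof is correct but structured differently from the paper's. After the (symmetric) reduction, the paper traces only along $\alpha$: it finds $v_1, v_2 \in G$ near the endpoints of the maximal $N_M(Go)$-excursion of $\alpha$ through $x = \alpha(i)$, writes $(u_1,u_2)=(v_1(v_1^{-1}v_2)(v_2^{-1}u_1),\,v_1(v_1^{-1}v_2)(v_2^{-1}u_2))$, and bounds $|v_2^{-1}u_2|$ directly by the triangle inequality via the nearby $\beta$-point, never needing to locate a $Go$-point along $\beta$; its case split is only on whether $x\in N_M(Go)$, with the long-excursion subcase absorbed into $U(\varepsilon_1/2)$. You instead trace along both $\alpha$ and $\beta$, introduce an auxiliary $\varepsilon_0 \ll \varepsilon_1$ that bounds both excursions by $\varepsilon_0 n$ via the $U(\varepsilon_0)$ exclusion, and parametrize by $(v,\,v^{-1}w,\,v^{-1}u_1,\,w^{-1}u_2)$. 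Your version is more symmetric and avoids the explicit case split (the uniform $\varepsilon_0 n$ bound covers all positions of $x$ and $\beta(i)$), at the cost of an extra free parameter and a slightly weaker exponent $\varepsilon_1 - c\varepsilon_0$ versus the paper's $\varepsilon_1/2$; both proofs reduce the DOP-versus-SCC distinction to the negligibility class of $U(\cdot)$ (Lemma \ref{lemma:out}), with everything else exponentially small under PEG alone. One minor slip: the annuli for $v^{-1}u_1$ and $w^{-1}u_2$ have thickness $O(\Delta)$, not $O(\varepsilon_0 n + \Delta)$, since $v$ and $w$ land within $M$ of points on $\alpha$ and $\beta$ respectively; your over-estimate is harmless.
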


\begin{proof}
Since the union of  two (exponentially) negligible sets is (exponentially) negligible, without loss of generality, we can assume that for all $(u_1,u_2)\in T(\varepsilon_1,\varepsilon_2, C)$, we have
$$\beta\cap N_C(\alpha_{[\varepsilon_1,\varepsilon_2]})\neq \emptyset.$$

Choose $1-\varepsilon_1<\rho<1$.
 By Lemma \ref{LargeAnnulus}, we can assume further that $(u_1,u_2)$ belongs to $T(\varepsilon_1,\varepsilon_2)\cap \Big(A([\rho n,n])\times A([\rho n,n])\Big)$.

Denote $n_1=|u_1|$.
By definition of $T(\varepsilon_1,\varepsilon_2, C)$, there exists $ i\in [\varepsilon_1n_1,\varepsilon_2n_1]$ so that $\beta\cap N_C(\alpha(i))\neq\emptyset$. Denote $x=\alpha(i)$ and $\Delta= C+2M$.  We proceed by a similar argument as before.

\textbf{Case 1.}\,\,Assume that $x\in N_M(Go)$ so there exists $v\in G$ such that $d(x,vo)\leq M$. Thus, $v\in A(i,M)$. Then $(u_1,u_2)$ can be written as $(v(v^{-1}u_1),v(v^{-1}u_2))$, where
\[v^{-1}u_1\in A(n_1-i,M),v^{-1}u_2\in A(n_2-i,C+M).\]

Note that $n_1\in [\rho n, n]$. In this case, we bound by above the number of elements $(u_1, u_2)$ as follows

\begin{align*}
&\leq \sum_{\substack{\rho n\leq n_1\leq n\\ \varepsilon_1n_1\leq i\leq\varepsilon_2n_1}}
|A(i,\Delta)|\cdot|A(n_1-i,\Delta)|\cdot|A([\rho n-i,n-i],\Delta)|\\
&\prec n\exp((2-\varepsilon_1)\delta_G n) = o(\exp(2\delta_G n)),
\end{align*}
so these pairs $(u_1, u_2)\in T(\varepsilon_1,\varepsilon_2)$ are exponentially negligible.

 \textbf{Case 2.}\,\,Otherwise, consider the maximal open segment $(x_1,x_2)_{\alpha^1}$ of $\alpha^1$, which contains $x$ but lies outside $N_M(Go)$. Denote $ j:= d(o, x_1),  l:=d(o, x_2)$. Thus $0\leq j  \le i$ and $i<l  \leq n_1$.

 \textbf{Subcase 2.1}\,\,$l-j\geq\frac{\varepsilon_1}{2}n_1$, then $u_1\in U(\frac{\varepsilon_1}{2})$. Since $U(\frac{\varepsilon_1}{2})$ is negligible in $G$ by Lemma \ref{lemma:out}, we have that $U(\frac{\varepsilon_1}{2})\times G$ is negligible as well in $G\times G$.

 \textbf{Subcase 2.2}\,\,$l-j<\frac{\varepsilon_1}{2}n_1$.
As before,  there exist $v_1,v_2\in G$ such that $d(x_1,v_1o)\leq M,d(x_2,v_2o)\leq M$. Thus, $v_1\in A(j,M)$.

Then $(u_1,u_2)$ can be written as $(v_1(v_1^{-1}v_2)(v_2^{-1}u_1),v_1(v_1^{-1}v_2)(v_2^{-1}u_2))$, where
\begin{align*}
&v_1^{-1}v_2\in A(l-j,2M),v_2^{-1}u_1\in A(n_1-l,M),\\
&v_2^{-1}u_2\in A((n_2-i)+(l-i),C+M)
\end{align*}

We consider the index set
\[\Lambda=\{(n_1,i,j,l)\in\mathbb{Z}^4:\rho n\leq n_1\leq n,\varepsilon_1n_1\leq i\leq\varepsilon_2n_1,0\leq j\leq i\leq l\leq j+\frac{\varepsilon_1}{2}n_1\}.\]
Hence, we have the upper bound on pairs $(u_1, u_2)$  of the second case as follows
\begin{align*}
&\leq \sum_{(n_1,i,j,l)\in\Lambda} |A(j,\Delta)|\cdot|A(l-j,\Delta)|\cdot|A(n_1-l,\Delta)|
\cdot|A([\rho n+l-2i,n+l-2i],\Delta)|\\
&\prec \sum_{(n_1,i,j,l)\in\Lambda}   \exp((n+n_1+l-2i)\delta_G)\\
& \prec n^4\exp((2-\frac{\varepsilon_1}{2})n\delta_G) = o(\exp(2\delta_G n)).
\end{align*}
Therefore, in this case, we have proved the negligibility of $T(\varepsilon_1,\varepsilon_2)$. The proof is complete.
\end{proof}

\section{The proof of the Theorems}\label{SecProofs}

This section is devoted to the proof of  the theorems of this paper.

\subsection{Generically free subgroups}
Let $\Lambda>0$. Denote by $\mathcal{F}^{(k)}$ by the set of
$k$-tuples  \[\{u_1,\cdots,u_k\}\in G^{(k)}\] such that
\begin{enumerate}
\item
$\langle u_1,u_2,\cdots,u_k\rangle$ is a free group of rank $k$ consisting  of contracting elements except the identity,
\item
the map $h\in \langle u_1,u_2,\cdots,u_k\rangle \mapsto ho \in Y$ is a $(\Lambda, 0)$-quasi-isometrically embedded map.
\end{enumerate}

Let   $\mathbb{F}(u_1,\cdots,u_k)$ be the free group generated by the $k$-tuple  $\{u_1,\cdots,u_k\}$.
In order  to prove that $\mathcal{F}^{(k)}$ is generic in $G^{(k)}$, the  idea is to construct a generic subset $E\subseteq G^{(k)}$, such that for any $\overrightarrow{u}=(u_1,\cdots,u_k)\in E$ and any nontrivial freely reduced word $W\in\mathbb{F}(u_1,\cdots,u_k)$, we can construct an admissible path from $o$ to $Wo$ that satisfies the conditions of Proposition \ref{pro:quasi-geo} and thus the path is a quasi-geodesic by the same proposition. This then concludes the proof of Theorem \ref{thm:gen}.

To be clear, we fix some notations and constants at the beginning (the reader is encouraged to read the proof first and return here until the constant appears).

\vspace{1em}
\hspace{-1.5em}\textbf{Setup}\,\,
\begin{enumerate}
\item We denote by ${C}>0$ the contraction constant for the contracting system $\{g \ax(h):g\in G\}$.   Assume that $ {C}$ satisfies Lemma \ref{lemma:prop} as well.
\item  Let $\mathbb{X}=\{g N_{ {C}}(A): g\in G\}$ be the contracting system of Lemma \ref{lemma:strongcontracting} with contraction constant $\kappa$. We denote $X=N_{ {C}}(A)$.

\item If our group action satisfies DOP condition, then constants $\nu,M>0$ are given by definition of DOP condition and Proposition \ref{pro:neg}. Moreover if the group action is SCC, then $\nu,M>0$ are given by Proposition \ref{pro:purelyexp}.

\item   Let $D=D(\kappa,9C)>16C$ be the constant of admissible paths given by Proposition \ref{pro:quasi-geo}.
\item Take $m>0$ so that $|h^m|>D+2\nu$. This can be done since $h$ is a contracting element, then we have $n\in \mathbb{Z}\mapsto h^n\in G$ is a quasi-isometric embedding map of $\mathbb{Z}\rightarrow G$.

\end{enumerate}

We refer the reader to the definitions of the set $V(2\varepsilon,1-2\varepsilon, h^m)$ in (\ref{Vepsilon12}), the set $W(\varepsilon, C)$  in (\ref{Wepsilon}), the set $Z(\varepsilon,1-\varepsilon, C)$  in (\ref{Zepsilon12}) and the set $T(\varepsilon,1-\varepsilon, C)$  in (\ref{Tepsilon12}).
\begin{Lemma}\label{lemma:gen}
Fix $1>\rho>\frac{8}{9}$ and $\varepsilon\in(1-\rho,\frac{1}{4})$. The subset $E$ of all $\overrightarrow{u}=(u_1,\ldots,u_k)\in G^{(k)}$ satisfying the following conditions is generic.
\begin{enumerate}
\item $|u_i|\geq\rho|\overrightarrow{u}|$ for $1\leq i\leq k$.
\item $u_i^{\pm 1}\notin V(2\varepsilon,1-2\varepsilon, h^m)\cup W(\varepsilon, C)\cup Z(\varepsilon,1-\varepsilon, C)$ for $1\leq i\leq k$.
\item $(u_i^{\pm 1},u_j^{\pm 1})\notin T(\varepsilon,1-\varepsilon, C)$ for $i\neq j\in\{1,2,\ldots,k\}$.
\end{enumerate}
When the action is SCC, the set $E$ is exponentially generic.
\end{Lemma}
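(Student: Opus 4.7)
The plan is to show that $E^c$ is (exponentially) negligible by a finite union bound over the failure modes of conditions (1)--(3), after a reduction to the large-annulus model.

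First I would pass from $B_n^{(k)}$ to $A([\rho n, n])^k$: by purely exponential growth and Lemma~\ref{LargeAnnulus}, the two sets coincide up to an (exponentially) negligible subset of $G^{(k)}$, and every $\overrightarrow u \in A([\rho n, n])^k$ automatically satisfies condition (1), since $\min_i |u_i| \ge \rho n \ge \rho |\overrightarrow u|$. It thus suffices to show that the subset of $A([\rho n, n])^k$ failing (2) or (3) is (exponentially) negligible.

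For condition (2), fix an index $i \in \{1, \ldots, k\}$ and a sign $\epsilon \in \{+1, -1\}$. The bad set $\{\overrightarrow u \in A([\rho n, n])^k : u_i^{\epsilon} \in V(2\varepsilon, 1-2\varepsilon, h^m) \cup W(\varepsilon, h, C) \cup Z(\varepsilon, 1-\varepsilon, C)\}$ has size at most $|(V \cup W \cup Z)^{\epsilon} \cap A([\rho n, n])| \cdot |A([\rho n, n])|^{k-1}$ by projecting onto the $i$-th coordinate. Since inversion $g \mapsto g^{-1}$ is a norm-preserving bijection of $G$, it preserves $A([\rho n, n])$ and the cardinality of any subset, so the inverted image has the same count as the original. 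Lemmas~\ref{lemma:con}, \ref{lemma:short} and \ref{lemma:sep1} then give (exponential) negligibility in $(\rho, \Delta)$-annuli of $V$, $W$, and $Z$ respectively, and hence of their finite union. Summing over the $k$ indices and $2$ signs yields a (exponentially) negligible subset of $G^{(k)}$.

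For condition (3), the same idea works on pairs: for each ordered pair $i \ne j$ and each choice of $\epsilon_1, \epsilon_2 \in \{+1, -1\}$, the set $\{(u_i^{\epsilon_1}, u_j^{\epsilon_2}) \in T(\varepsilon, 1-\varepsilon, C)\}$ is obtained from $T$ by a norm-preserving bijection of $G \times G$, so its intersection with $A([\rho n, n])^2$ has the same cardinality as $T \cap A([\rho n, n])^2$, which is (exponentially) negligible by Lemma~\ref{lemma:sep}. Multiplying by $|A([\rho n, n])|^{k-2}$ for the remaining free coordinates and summing over the finitely many quadruples $(i, j, \epsilon_1, \epsilon_2)$ preserves (exponential) negligibility. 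Combining these with (2) completes the bound on $E^c$.

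The main obstacle is not analytic but a bookkeeping check at the parameter level: one must verify that the chosen regime $\rho > 8/9$ and $\varepsilon \in (1-\rho, 1/4)$ indeed satisfies the hypotheses of Lemmas~\ref{lemma:con}, \ref{lemma:short}, \ref{lemma:sep1} and \ref{lemma:sep} simultaneously (in particular the non-trivial inequality $\varepsilon_2 \rho \in (\varepsilon_1, \varepsilon_2)$ required by Lemma~\ref{lemma:con}), and that the tuple-and-sign inversion on $G^{(k)}$ reduces cleanly to the one- and two-coordinate annular counts. Once this alignment is in place, the lemma is just a finite union bound against the Section~\ref{sec:negligiblesubset} negligibility estimates, exponential in the SCC case by the strengthened second conclusions of those lemmas.
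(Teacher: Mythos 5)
Your proof takes the same route as the paper's: reduce to the large-annulus model via Lemma~\ref{LargeAnnulus} (which absorbs condition (1)), then union-bound the failure sets for (2) and (3) against the negligibility estimates of Lemmas~\ref{lemma:con}, \ref{lemma:short}, \ref{lemma:sep1}, \ref{lemma:sep}, using norm-preservation of inversion to handle the $u_i^{-1}$ cases and the product structure of $A([\rho n,n])^k$ to lift one- and two-coordinate bounds to $G^{(k)}$. The parameter check you flag is indeed the only delicate point: for Lemma~\ref{lemma:con} with $\varepsilon_1=2\varepsilon$, $\varepsilon_2=1-2\varepsilon$, the requirement $\varepsilon_2\rho>\varepsilon_1$ reads $\varepsilon<\rho/\bigl(2(1+\rho)\bigr)$, which is strictly less than $1/4$ when $\rho<1$, so the stated bound $\varepsilon<1/4$ is mildly too generous; nonetheless $1-\rho<\rho/\bigl(2(1+\rho)\bigr)$ holds whenever $2\rho^2+\rho-2>0$, i.e.\ $\rho>(\sqrt{17}-1)/4\approx 0.78$, which $\rho>8/9$ guarantees, so a valid $\varepsilon$ always exists.
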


\begin{proof}
It suffices to show that  the set of $\overrightarrow{u}\in G^{(k)}$  in each statement as above  is generic. It is clear that our choice of $\rho,\varepsilon$ satisfy all the condition of the lemmas in Section \ref{sec:negligiblesubset}. Hence the assertion (1) is given by Lemma \ref{LargeAnnulus}. The  assertion (2) is a consequence of Lemmas \ref{lemma:con}, \ref{lemma:short} and   \ref{lemma:sep1} together. And the assertion (3) follows from Lemma \ref{lemma:sep}.
\end{proof}

Now we are ready to prove Theorem \ref{thm:gen}.

\begin{proof}[Proof of Theorem \ref{thm:gen}]
For notational simplicity, we give the proof for $k=2$.  Let $E$ be the subset of $G\times G$ provided by Lemma \ref{lemma:gen}. It suffices to show that $E$ in contained in $\mathcal{F}^{(2)}$.

Fix a choice $(u_1,u_2)\in E$. We choose a geodesic $\alpha=[o,u_1o]$, and denote by $\overline{\alpha}:=[o,u_1^{-1}o]$ a geodesic from $o$ to $u_1^{-1}o$. Similarly, we define $\beta=[o, u_2o]$ and its reverse $\overline\beta:=[o,u_2^{-1}o]$.  Denote $n_1=|u_1|$ and $n_2=|u_2|$.

Let $W$ be a non-trivial freely reduced word in $\mathbb{F}(u_1,u_2)$. We shall prove that the evaluation of the word $W$ in $G$ gives a non-trivial contracting element.  For this purpose,  we can assume without loss of generality that $W$ is cyclically reduced so that the bi-infinite word $W^\infty=\cdots W\cdot W\cdot W\cdots$  is reduced, written explicitly as $$W^\infty= \cdots x_{-1}x_0x_1x_2\cdots x_j\cdots$$ where each $x_j\in \{u_1,u_2,u_1^{-1},u_2^{-1}\}.$

Associated with the bi-infinite word $W^\infty$, we construct a bi-infinite path $\gamma$ as a concatenation of geodesic segments $\gamma_j$ for $j\in \mathbb Z$ as follows:
\[\gamma=\cdots\cdot\gamma^{-1}\cdot\gamma^{0}\cdot \gamma^{1}\cdot \gamma^{2} \cdots\cdot \gamma^{j}\cdots,\]
where $\gamma^0=[o, x_0 o]$ and for $j\ge 1$, $\gamma^j$ is the $x_1\cdots x_{j-1}$-translate of $\alpha$ or $\beta$ depending on $x_j$.

We now describe a procedure to convert the path $\gamma$ to be an admissible path  by truncating certain subpaths.

Since $u_1, u_2\notin V(2\varepsilon,1-2\varepsilon, h^m)$ defined in (\ref{Vepsilon12}), we have that each $\gamma^j$ contains a $(\nu,h^m)$-barrier in $\gamma^{j}_{[2\varepsilon, (1-2\varepsilon)]}$. Then there exists an element $g_j\in G$ such that
\[\max\{d(g_j o,\gamma^{j}_{[2\varepsilon, (1-2\varepsilon)]}),\:
d(g_ih^m o,\gamma^{j}_{[2\varepsilon,(1-2\varepsilon)]})\}\leq \nu\le C.\]

We denote $X=N_{ {C}}(A)$. Let $v_j,w_j$ be the entry and exit point of $\gamma^{j}$ into $N_C(g_jA)=g_j X$ respectively. We must have
$$
d(v_j, w_j)\ge d(o, h^mo)-2\nu.
$$

Since $u_1, u_2\notin W(\varepsilon, C)$ defined in (\ref{Wepsilon}), we have $d(v_j,w_j)\leq \varepsilon\min\{|u_1|, |u_2|\}\le \varepsilon   n$, and so $v_j,w_j\in \gamma^j_{[\varepsilon,(1-\varepsilon)]}$ follows from the definition of $W(\varepsilon, C)$. In other words, the subsegment $[v_j,w_j]_{\gamma^j}$ of $\gamma^j$ is contained in $g_jX$ by Lemma \ref{lemma:prop}.

We now truncate the subpath $[v_{j-1}, \gamma^j_-]\cdot [\gamma_-, w_j]$ from $\gamma$ and replace it with a geodesic. The resulting path is given as follows
\[\beta=\cdots [v_{-1},w_{-1}]_{\gamma^-1}\cdot [w_{-1}, v_0]\cdot [v_0,w_0]_{\gamma^0}\cdot [w_0, v_1]\cdot[v_1,w_1]_{\gamma^1}\cdot[w_1, v_2]\cdot\cdots
\cdot [w_{j-1}, v_j]\cdot[v_j,w_j]_{\gamma^j}\cdots,\]
where $[w_{j-1}, v_j]$ is a choice of geodesic between $w_{j-1}$ and $v_j$ for any $j\in \mathbb Z$.

By Lemma \ref{lemma:strongcontracting}, $\mathbb{X}=\{g N_{ {C}}(A): g\in G\}$ is a contracting system with bounded projection. In the following claim, we shall consider the admissible path  associated with $\{g_jX: j\in \mathbb Z\}$.

\vspace{.5em}
\hspace{-1.2em}\textbf{Claim}\,\, $\beta$ is an $(D, \tau)$-admissible path.

\begin{proof}[Proof of Claim]
First of all, we have $v_j,w_j\in g_jX$ and $d(v_j,w_j)\geq|h^m|-2\nu\geq D$, thus    the condition (\textbf{LL1}) is satisfied.

Recall that $W^\infty$ is a freely reduced word over $\{u_1, u_2, u_1^{-1}, u_2^{-1}\}$, so the pair of any two adjacent letters $(x_j, x_{j+1})$ does not belong to $Z(\varepsilon, 1-\varepsilon, 4C)$  and $T(\varepsilon, 1-\varepsilon, 4C)$. Since   $v_j,w_j\in \gamma^{j}_{[\varepsilon,(1-\varepsilon)]}$, we derive from   Lemma \ref{lemma:gen} that
\[\forall j\in \mathbb Z,\,\gamma^{j-1}\cap N_{4C}([v_j,w_j]_{\gamma^j})=\emptyset,\gamma^{j+1}\cap N_{4C}([v_j,w_j]_{\gamma^j})=\emptyset.\]

For simplicity, we write $X_j:=g_jX_j$.
Using Lemma \ref{lemma:notint}, we have
\begin{equation}\label{eq:notint1}
\forall  j\in \mathbb Z,\,\gamma^{j-1}\cap X_j=\emptyset, \gamma^{j+1}\cap X_j=\emptyset.
\end{equation}

Thus, by Lemma \ref{lemma:proj} we obtain $\|\Pi_{X_j}([w_{j}, \gamma^{j}_+])\|\leq 3 {C},\|\Pi_{X_j}(\gamma^{j+1})\|\leq3 {C}$.

For any $ j\in \mathbb Z$, we have
\begin{align*}
\|\Pi_{X_j}([w_j, v_{j+1}])\|
&\stackrel{(\ref{eq:pointproj})}{\leq}\|\Pi_{X_j}(\{w_j,v_{j+1}\})\|+ {C} \\
&\leq \|\Pi_{X_j}(\{w_j,\gamma^j_+\})\|+\|\Pi_{X_j}(\{\gamma^j_+,v_{j+1}\})\|+ {C}\\
&\stackrel{(\ref{eq:pointproj})}{\leq} \|\Pi_{X_j}([\{z_j,\gamma^j_+\}]_{\gamma^j})\|+\|\Pi_{X_j}(\gamma^{j+1})\|+3 {C}\\
&\leq 6 {C}+3 {C} \le 9C.
\end{align*}
A similar estimate as above shows
\[\|\Pi_{X_j}([w_{j-1}, v_{j}])\|\leq 9C.\]
Thus the condition (\textbf{BP}) is satisfied.

From (\ref{eq:notint1}), we have $X_j\neq X_{j+1}$ for all $j \in \mathbb Z$. Then all conditions in the definition of admissible paths are verified. Thus, $\beta$ is a $(D, 9C)$-admissible path.
\end{proof}

By Proposition \ref{pro:quasi-geo}, we know that $\beta$ is a $(\Lambda,0)$-quasi-geodesic and it is contracting. Thus, every non-trivial freely reduced word gives a non-trivial contracting element  so  $\langle u_1,u_2\rangle$ is a free group of rank 2.

This implies that $\langle u_1,u_2\rangle$ generates a free group of rank 2  consisting of contracting elements  such that the orbital map is $(\Lambda, 0)$-quasi-isometrically embedded. This concludes the proof of Theorem \ref{thm:gen}.
\end{proof}

\begin{proof}[Proof of Theorem \ref{thm:gen2}]
If a non-elementary group $G$ admit a proper SCC action on $(Y,d)$ with a contracting element, then the corresponding set defined in Lemma \ref{lemma:gen} is exponentially generic since these sets provided in Section \ref{sec:negligiblesubset} are exponentially negligible. Therefore,  $\mathcal{F}^{(k)}$ is exponentially generic in $G^{(k)}$.
\end{proof}

\subsection{Statistical hyperbolicity}

\begin{proof}[Proof of Theorem \ref{thm:stahyper} for Annuli Case]
Choose any $0<\varepsilon <1$. Let
\[E=U(\frac{\varepsilon}{2}, C)\cup V(2\varepsilon,3\varepsilon,h^m)\cap W(\varepsilon,C).\]
Then By Lemma \ref{lemma:out}, \ref{lemma:con} and \ref{lemma:short} together, we have $\lim_{n\rightarrow+\infty}\frac{|E\cap A(n,\Delta)|}{|A(n,\Delta)|}=0$ for some $\Delta>0$. Now we fix such a $\Delta$.

For any $x\in A(n,\Delta)\setminus E$, we fix a geodesic $\alpha=[o,xo]$, and consider the following set
\[K_x=\{z\in A(n,\Delta):\exists\beta=[o,zo],s.t.\,\beta\cap N_{4C}(\alpha_{[\varepsilon,4\varepsilon]})\neq\emptyset\}.\]

We shall show that $K_x$ is negligible:
\begin{equation}\label{KxEQ}
\lim_{n\rightarrow \infty}\frac{|K_x|}{|A(n,\Delta)|}=0
\end{equation} for each $x$. Set $n_1=|x|$, then $n-\Delta\leq n_1\leq n+\Delta$.
We carry out the same analysis as in the proof of Lemma \ref{lemma:sep} to bound $|K_x|$: given any element $z\in K_x$ of length $n_2$, if $\alpha(i)$ intersect $N_M(Go)$ for some $\varepsilon n_1\leq i\leq4\varepsilon n_1$, then we can write $(x,z)$ as $(v_1(v_1^{-1}x),v_1(v_1^{-1}z))$ for some $v_1\in A(i,M)$, such that $v_1^{-1}z\in A(n_2-i,2\Delta)$; otherwise we can write $(x,z)$ as $(v_2(v_2^{-1}x),v_2(v_2^{-1}z))$ for some $i\leq l\leq i+\frac{\varepsilon}{2}n_1$ and some $v_2\in A(l,M)$, such that $v_2^{-1}z\in A((n_2-i)+(l-i),2\Delta)$ (by our choice of $x\notin U(\frac{\varepsilon}{2})$, subcase 2.1 of Lemma \ref{lemma:sep} can not happen).

If we introduce the index sets
\[\Lambda_1=\{(n_2,i)\in\mathbb{Z}^2:n-\Delta\leq n_2\leq n+\Delta,\varepsilon n_1\leq i\leq4\varepsilon n_1\},\]
\[\Lambda_2=\{(n_2,i,l)\in\mathbb{Z}^3:\rho n\leq n_2\leq n,\varepsilon n_1\leq i\leq4\varepsilon n_1,i\leq l\leq i+\frac{\varepsilon}{2}n_1\},\]
then we have
\begin{align*}
|K_x|&\leq \sum_{(n_2, i)\in \Lambda_1}|A(n_2-i,2\Delta)|+\sum_{(n_2,i,l)\in \Lambda_2}|A(n_2+l-2i,2\Delta)| \\
&\prec \exp((1-\varepsilon )n\delta_G)+ n^3\exp((1-\frac{\varepsilon}{2})n\delta_G),
\end{align*}
which implies (\ref{KxEQ}) from $|A(n,\Delta)| \asymp \exp(\delta_G n)$.

The next step is to bound the distance between $xo$ with the orbit point $yo$ outside $K_x$.
\vspace{.5em}
\hspace{-1.2em}\textbf{Claim}\,\,For any $y\in A(n, \Delta)\setminus K_x$, we have $d(x,y)\geq 2(n-4\varepsilon n-4\varepsilon\Delta-\Delta-4C)$.

\begin{proof}[Proof of Claim]
Since $x\notin V(2\varepsilon,3\varepsilon,h^m)$   in (\ref{Vepsilon12}), $\alpha$ contains a $(\nu,h^m)$-barrier in $\alpha_{[2\varepsilon,3\varepsilon]}$, so there exists an element $g\in G$ such that
\[\max\{d(g o,\alpha_{[2\varepsilon, 3\varepsilon3]}),\:
d(gh^m o,\alpha_{[2\varepsilon,3\varepsilon]})\}\leq \nu\le C.\]
We denote $X=N_C(A)$. Let $v,w$ be the entry and exit point of $\alpha$ into $gX$ respectively, so that $$d(v,w)\geq d(o,h^mo)-2\nu>D.$$ Since $x\notin W(\varepsilon,C)$ in (\ref{Wepsilon}), this implies that $v,w\in\alpha_{[\varepsilon,4\varepsilon]}$. Thus, $d(o, w)\ge \varepsilon n_1$ and $d(w, xo) \ge (1-4\varepsilon)n_1$.

For any $y\in A(n, \Delta)\setminus K_x$, we know from the definition of $K_x$ that for any  geodesic $\beta=[o,yo]$, $\beta\cap N_{4C}(\alpha_{[\varepsilon,4\varepsilon]})=\emptyset$. Thus, we have $\beta\cap gX=\emptyset$ by Lemma \ref{lemma:notint}.

If we choose $d(o,h^mo)-2\nu> D\ge16 {C}$ as in the setup, then for any $\gamma=[xo,yo]$, we have $\gamma\cap gX\neq\emptyset$. Indeed, if $\gamma\cap gX=\emptyset$, we will then  obtain a contradiction:
\begin{align*}
d(v,w)&\leq\|\Pi_X(\{v,1\})\|+\|\Pi_X(\{1,y\})\|+\|\Pi_X(\{y,x\})\|+\|\Pi_X(\{x,w\})\|\\
&\leq\|\Pi_X([1,v]_{\alpha})\|+\|\Pi_X(\beta)\|+\|\Pi_X(\gamma)\|+\|\Pi_X([w,x]_{\alpha})\|+4 {C}\\
&\leq 12 {C}+4 {C}<D
\end{align*}
from   a projection argument.

Let $u$ be the entry point of $\gamma$ in $gX$. Then $d(u,w)\leq 4C$ by the contracting property of $X$. Hence,
$$d(x o,u)\geq d(x o,w)-d(u,w)\geq n_1-4\varepsilon n_1-4C.$$ Since $y\in A(n,\Delta)$, we have $$n-\Delta\leq d(o,yo)\leq d(o,w)+d(w,u)+d(u,yo)\leq4\varepsilon n_1+4C+d(u,yo)$$ which yields
$$d(u,yo)\geq n-4\varepsilon n_1-\Delta-C.$$ We finally obtain
$$d(xo,yo)=d(xo,u)+d(u,yo)\geq2(n-4\varepsilon n-4\varepsilon\Delta-\Delta-4C)$$
concluding the proof of the claim.
\end{proof}

\vspace{.5em}
Let us return to the proof of the theorem. By the above claim, we have
\[\sum_{x,y\in A(n,\Delta)}d(xo,yo)\geq2(n-4\varepsilon n-4\varepsilon\Delta-\Delta-4C)\cdot(|A(n,\Delta)|-|E|)\cdot(|A(n,\Delta)|-|K_x|).\]
Notice $\lim_{n\rightarrow \infty}\frac{|K_x|}{|A(n,\Delta)|}=0$ and $\lim_{n\rightarrow \infty}\frac{|E|}{|A(n,\Delta)|}=0$. Then we obtain
\[\liminf_{n\rightarrow\infty}\frac{1}{|A(n,\Delta)|^2}\sum_{x,y\in A(n,\Delta)}\frac{d(xo,yo)}{n}\geq 2(1-4\varepsilon).\]
Since $\varepsilon$ is arbitrary,   we have $E_A(G,\Delta)=2$.
\end{proof}

\begin{proof}[Proof of Theorem \ref{thm:stahyper} for Ball Case] The proof is almost identical to that in annuli case. We only point out the difference in the proof.

Choose any $\frac{1}{2}<\rho<1$ and any $0<\varepsilon<\frac{\rho}{8}$. Let $E=U(\frac{\varepsilon}{2})\cup V(2\varepsilon,3\varepsilon)\cup W(\varepsilon)$. Then By Lemma \ref{lemma:out}, \ref{lemma:con} and \ref{lemma:short} together, we have $\lim_{n\rightarrow+\infty}\frac{|E\cap B_n|}{|B_n|}=0$.

For any $x\in A([\rho n,n])\setminus E$, set $n_1=|x|$, then $\rho n\leq n_1\leq n$. We fix a geodesic $\alpha=[o,xo]$ and consider
\[K_x=\{z\in A([\rho n,n]):\exists\beta=[o,zo],s.t.\,\beta\cap N_{4C}(\alpha_{[\varepsilon,4\varepsilon]})\neq\emptyset\}.\]

By the same argument as in annuli case, we have  $\lim_{n\rightarrow \infty}\frac{|K_x|}{|A([\rho n, n])|}=0$

\vspace{.5em}
\hspace{-1.2em}\textbf{Claim}\,\,For any $y\in A([\rho n,n])\setminus K_x$, we have $d(x,y)\geq 2\rho n-8\varepsilon n-8C$.

\begin{proof}[Proof of Claim]
The proof is the same as that in the annuli case, except that we now use the big annulus $A([\rho n,n])$. Note that
$$d(x o,u)\geq d(x o,w)-d(u,w)\geq n_1 -4\varepsilon n_1  -4C,$$
where $n_1\in [\rho n, n]$.
Since $y\in A([\rho n,n])$, we have
$$\rho n\leq d(o,yo)\leq d(o,w)+d(w,u)+d(u,yo)\leq4\varepsilon n_1+C+d(u,yo),$$
from which  we have $d(u,yo)\geq \rho n-4\varepsilon n_1-4C$. So $d(xo,yo)=d(xo,u)+d(u,yo)\geq2\rho n-8\varepsilon n-8C$.
\end{proof}

The same computation as above in annuli case gives
\[\liminf_{n\rightarrow\infty}\frac{1}{|B_n|^2}\sum_{x,y\in B_n}\frac{d(xo,yo)}{n}\geq 2\rho-8\varepsilon.\]
Since $\varepsilon$ can be made arbitrary small and $\rho$ can be arbitrary close to $1$, then we obtain $E_B(G)=2$.
\end{proof}

\begin{Examples}\label{FreeExample}
We carry out a concrete example to explain the convergence speed of $E_A(G, \Delta)=2$  of a statistical hyperbolic group is at most of order $O(n^{-1})$. Consider the free group $\mathbb{F}(a,b)$ and its Cayley graph with respect to the free generators $\{a,b\}$. It is easy to calculate
\[\frac{1}{|A(n,0)|^2}\sum_{x,y\in A(n,0)}d(x,y)=\frac{3}{4}\cdot 2n+\frac{1}{4}\frac{2}{3}\cdot(2n-2)+\frac{1}{4} \frac{1}{3}\frac{2}{3}\cdot(2n-4)+\cdots+0.\]
Thus we obtain
\[\big|\frac{1}{|A(n,0)|^2}\sum_{x,y\in A(n,0)}d(x,y)-2\big|=\big|\frac{n-1}{2n}-\frac{3}{4n}\cdot(\frac{1}{3}-\frac{1}{3^n})-\frac{1}{2}\big|=
\frac{1}{2n}+\frac{3}{4n}\cdot(\frac{1}{3}-\frac{1}{3^n})=O\big(\frac{1}{n}\big).\]
\end{Examples}

\end{document}